\numberwithin{equation}{section}
\theoremstyle{plain}
\newtheorem{theo}{Theorem}[section]
\newtheorem{pro}[theo]{Proposition}
\newtheorem{cor}[theo]{Corollary}
\newtheorem{lem}[theo]{Lemma}
\newtheorem{rem}[theo]{Remark}
\newtheorem{defi}[theo]{Definition}
\DeclareMathOperator{\Ric}{Ric}
\DeclareMathOperator{\Div}{div}
\newcommand{\ooint}[2]{\mathopen (#1,#2\mathclose )}
\newcommand{\ocint}[2]{\mathopen (#1,#2\mathclose ]}
\newcommand{\coint}[2]{\mathopen [#1,#2\mathclose )}
\newcommand{\ccint}[2]{\mathopen [#1,#2\mathclose ]}
\newcommand\ooint*[2]{\left(#1,\,#2\right)}
\newcommand\ocint*[2]{\left(#1,\,#2\right]}
\newcommand\coint*[2]{\left[#1,\,#2\right)}
\newcommand\ccint*[2]{\left[#1,\,#2\right]}
\newcommand{\C}{\mathcal{C}}
\newcommand{\R}{\mathbb{R}}
\let\d\relax
\newcommand{\d}{\, d}
\newcommand{\de}{\partial}
\newcommand{\scal}[2]{\braket{#1 | #2}}
\newcommand{\Scal}[2]{\Braket{#1 | #2}}
\DeclarePairedDelimiter{\abs}{\lvert}{\rvert}
\DeclareMathOperator{\Ca}{Cap}
\renewcommand{\phi}{\varphi}
\renewcommand{\epsilon}{\varepsilon}
\let\oldchi\chi
\renewcommand{\chi}{\raisebox{.2ex}{$\oldchi$}}
\renewcommand{\leq}{\leqslant}
\renewcommand{\geq}{\geqslant}
\def\@maketitle{%
	\newpage
	\null
	\vskip 2em%
	\begin{center}%
		\let \footnote \thanks
		{\Large\bfseries \@title \par}%
		\vskip 1.5em%
		{\normalsize
			\lineskip .5em%
			\begin{tabular}[t]{c}%
				\@author
			\end{tabular}\par}%
	\end{center}%
	\par
	\vskip 1.5em}
\title{Some Sphere Theorems in Linear Potential Theory}
\author[1,2]{Stefano Borghini}
\author[1]{Giovanni Mascellani}
\author[2]{Lorenzo Mazzieri}
\affil[1]{\small Scuola Normale Superiore di Pisa,
	piazza dei Cavalieri 7, 56126 Pisa (PI), Italy}
\affil[2]{\small Universit\`a degli Studi di Trento,
via Sommarive 14, 38123 Povo (TN), Italy}
\affil[ ]{\vskip 0em\rm stefano.borghini@unitn.it , 
giovanni.mascellani@sns.it , 
lorenzo.mazzieri@unitn.it}
\date{}
\newcommand{\customlabel}[2]{%
  \protected@write\@auxout{}{\string\newlabel{#1}{{#2}{\thepage}{#2}{#1}{}}}%
  \hypertarget{#1}{}
}
\renewcommand\appendix{\par
\setcounter{subsection}{0}%
\setcounter{table}{0}
\setcounter{figure}{0}
\gdef\thetable{\Alph{table}}
\gdef\thefigure{\Alph{figure}}
\gdef\thesection{A}
}
\begin{document}

\maketitle

\abstract{\noindent In this paper we analyze the capacitary potential due to a charged body in order to deduce sharp analytic and geometric inequalities, whose equality cases are saturated by domains with spherical symmetry. In particular, for a regular bounded domain $\Omega \subset \R^n$, $n\geq 3$, we prove that if the mean curvature $H$ of the boundary obeys the condition 
\begin{equation*}
- \, \bigg[ \frac{1}{\Ca(\Omega)} \bigg]^{\frac{1}{n-2}} \!\! \leq \,\, \frac{H}{n-1}   \,\, \leq \,\, \bigg[ \frac{1}{\Ca(\Omega)} \bigg]^{\frac{1}{n-2}} \, ,
\end{equation*}	
then $\Omega$ is a round ball.}
\\
\\
\noindent\textsc{MSC (2010):
	35N25,
	\!31B15,
	\!35B06,
	\!53C21.
}

\noindent{\underline{Keywords}: capacity, electrostatic potential, overdetermined boundary value problems.}

\section{Introduction and statement of the main result}

In this work we are interested in studying the analytic and geometric properties of a solution $(\Omega, u)$ to the following problem
	\begin{equation}
	\label{eq:system}
	\def\arraystretch{1.3}
	\left\{ \begin{array}{r@{}c@{}ll}
		\Delta u \,& \,=\, &\, 0 & \text{in } \R^n \setminus \overline\Omega\, , \\
		u \,&\,=\,&\, 1 & \text{on } \de\Omega \, , \\
		u(x) \,&\,\to\,&\, 0 & \text{as } \abs{x} \to +\infty \,.
		\end{array}\right.
	\end{equation}  	
In classical potential theory, the bounded domain $\Omega$ is interpreted as a uniformly charged body and the function $u$ is the electrostatic (or capacitary) potential due to $\Omega$. 
The basic properties of solutions to~\eqref{eq:system}, such as existence, uniqueness, regularity, validity of maximum and minimum principles as well as asymptotic estimates, are of course very well known and for a systematic treatment, we refer the reader to~\cite{kellogg}. However, for the statements which are more relevant to our work, we decided to include the proofs in Appendix~\ref{ap:asympt}, where other references are also given.
%
%
%
%
%
In order to introduce our main results, we specify our assumptions on the regularity of both the domain $\Omega$ and the function $u$ with the following definition.
\begin{defi}
	 \label{ass:main}
A pair $(\Omega,u)$ is said to be a {\em regular solution} to problem~\eqref{eq:system} if
\begin{itemize}
	\item $\Omega  \subset \R^n$, $n \geq 3$, is a bounded open domain with smooth boundary. Without loss of generality, we also assume that $\Omega$ contains the origin and $\R^n\setminus\Omega$ is connected.
	\item The function $u \colon
	\R^n \setminus \Omega \to \ocint{0}{1}$ is a solution to problem~\eqref{eq:system}, it is $\mathcal{C}^{\infty}$ in $\R^n \setminus \Omega$ and analytic in $\R^n \setminus \overline{\Omega}$.
\end{itemize}
\end{defi}
\noindent
Before proceeding, we point out that the assumption about the connectedness of $\R^n\setminus\Omega$ is not restrictive. In fact, since $\Omega$ is bounded, the set $\R^n\setminus\Omega$ has one and only one unbounded connected component. The other ones are then forced to be compact so that $u$ has to be constant on them. In other words, these connected components are completely irrelevant to the analysis of problem~\eqref{eq:system} and the above definition allows us to simply ignore them. 
%
%
%
%
%
%
%

In order to fix some notation as well as some convention, we recall that the  \emph{electrostatic capacity} associated to the set $\Omega$ is defined by
\begin{equation}
  \label{eq:def-omega}
  \Ca(\Omega) = \inf \Set{\frac{1}{(n-2)\abs{S^{n-1}}}\int_{\R^n}\abs{D w}^2\d v | w\in\C^\infty_c(\R^n) ,\ w\equiv 1\ \mbox{in } \Omega  } \, ,
\end{equation}
where $\abs{S^{n-1}}$ is the area of the unit sphere $S^{n-1}$ in
$\R^n$ and $d \sigma$ is the hypersurface measure in $\R^n$. We refer the reader to Appendix~\ref{ap:asympt} for some comments about this definition, here we just observe that the capacity of $\Omega$ can be represented in terms of the {\em capacitary potential} $u$ as
\begin{equation}
\label{eq:capacity}
\Ca(\Omega) =  \frac{1}{(n-2) \abs{S^{n-1}}} \int_{\de\Omega} \abs{Du} \d \sigma \, .
\end{equation}
We recall that if $\Omega$ is a round ball centered at the origin, then the capacitary potential
$u$ is radially symmetric and it is given by the explicit formula $u(x) = \Ca(\Omega) \abs{x}^{2-n}$. For more general domains, $u$ satisfies the asymptotic expansions described in Proposition~\ref{pro:est-u}.

To introduce our main result, we recall that
in the recent work~\cite{agostiniani-mazzieri} the authors define for $p \geq 0$ the quantities
\begin{equation}
\label{eq:U_p}
(0,1] \, \ni  \, t \,\, \longmapsto \,\, U_p(t) \, 
\,= 
\,\, \left[\frac{{\rm Cap} (\Omega)}{t} \right]^{\frac{(p-1)(n-1)}{(n-2)}}
\!\!\!\!\!\!\! \int_{ \{ u = t \}} \!\!\!  |D u|^p \, d \sigma 
\end{equation}
and prove that for every $p \geq 2- 1/(n-1)$ these functions are differentiable and monotonically nondecreasing, namely $U_p' \geq 0$. Moreover, the monotonicity is strict unless both the function $u$ and the domain $\Omega$ are rotationally symmetric (see~\cite[Theorem~1.1]{agostiniani-mazzieri}). On the other hand, by the asymptotic expansion of $u$ and $|D u|$ at infinity, one can easily compute the limit
\begin{equation}
\label{eq:lim_up}
\lim_{t\to 0^{+}} \, U_p(t) \,\, = \,\, \left[{{\rm Cap} (\Omega)}\right]^{p} \!(n-2)^p\, \abs{S^{n-1}} \, .\phantom{\qquad}
\end{equation}
Combining this fact with the monotonicity, the authors are able to prove the following sharp inequality \begin{equation*}
\phantom{\quad \quad}
|S^{n-1}|^{\!\frac1p} \, \left[ {{\rm Cap}(\Omega)}\right]^{1-\frac{(p-1)(n-1)}{p(n-2)}} \!\!\! \leq \,\,  \,  \left\|  \frac{ \, D  u \, }{n-2} \right\|_{L^p(\partial\Omega)}  ,
\end{equation*}
for every $p \geq 2- 1/(n-1)$. 
An interesting feature of the above inequality is that it comes together with a rigidity statement. In fact, as soon as the equality is fulfilled, everything is rotationally symmetric. Finally, letting $p \to +\infty$, one deduces that 
\begin{equation*}
\phantom{\quad \quad}
 \left[ {{\rm Cap}(\Omega)}\right]^{-\frac{1}{n-2}}  \leq \,\,  \, \max_{\de\Omega}\left|\frac{Du}{n-2}\right| \,.
\end{equation*}
However, {\em a priori} it is not clear if the rigidity statement holds true for this inequality. 
Our main result answers in the affermative to this question.


\begin{theo}
\label{theo:main-thm2}
Let $(\Omega,u)$ be a regular solution to problem~\eqref{eq:system} in the sense of Definition~\ref{ass:main}. Then the following inequality holds
	\begin{equation}
	\label{eq:main-thm2}
	\frac{1}{\Ca(\Omega)} \,\leq\,\left({\max_{\de\Omega}\left|\frac{Du}{n-2}\right|}\right)^{n-2}\,.
	\end{equation}
Moreover, the equality is fulfilled if and only if $\Omega$ is a ball and $u$ is rotationally symmetric.
\end{theo}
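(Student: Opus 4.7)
The inequality \eqref{eq:main-thm2} itself is an immediate consequence of the $L^p$ family of inequalities recalled above. Setting $M := \max_{\partial\Omega}\abs{Du/(n-2)}$ and using the pointwise bound $\int_{\partial\Omega}\abs{Du/(n-2)}^p\d\sigma \leq M^p\abs{\partial\Omega}$, the $L^p$ inequality becomes $\abs{S^{n-1}}\Ca(\Omega)^{p-(p-1)(n-1)/(n-2)}\leq M^p\abs{\partial\Omega}$; taking $p$-th roots and letting $p\to+\infty$ yields $\Ca(\Omega)^{-1/(n-2)}\leq M$. The substantive content of the theorem is therefore the rigidity statement.

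To establish rigidity, I plan to introduce the scale-invariant function
\[
P \,=\, \frac{\abs{Du}^2}{(n-2)^2\,u^{2(n-1)/(n-2)}} \qquad\text{on}\ \R^n\setminus\overline{\Omega}\,,
\]
which is identically equal to $\Ca(B_R)^{-2/(n-2)}$ for the radial capacitary potential of a ball $B_R$. Since $u\equiv 1$ on $\partial\Omega$, $\max_{\partial\Omega}P = M^2$, while Proposition~\ref{pro:est-u} yields $\lim_{\abs{x}\to+\infty}P(x) = \Ca(\Omega)^{-2/(n-2)}$. Under the equality hypothesis $M^{n-2}\Ca(\Omega)=1$, the boundary maximum and the asymptotic value of $P$ coincide.

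The technical heart of the argument is to show that $P$ satisfies an elliptic differential inequality of the form $\Div(u^{\alpha}DP) \geq 0$ on $\R^n\setminus\overline{\Omega}$ (for a suitable exponent $\alpha$), which admits a strong maximum principle. Such an inequality should be derived by combining the Bochner identity $\Delta\abs{Du}^2 = 2\abs{\nabla^2 u}^2$ (valid since $\Delta u=0$) with the refined Kato inequality $\abs{\nabla^2 u}^2 \geq \tfrac{n}{n-1}\abs{D\abs{Du}}^2$ on the non-critical set of $u$, the conformal weight being chosen so that the lower-order terms in $u$ cancel. Granted this inequality, the strong maximum principle together with the Hopf boundary point lemma applied at a point where $P$ attains its boundary supremum forces $P$ to be constant throughout $\R^n\setminus\overline{\Omega}$.

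Once the identity $\abs{Du}^2 = (n-2)^2\,\Ca(\Omega)^{-2/(n-2)}\, u^{2(n-1)/(n-2)}$ is established, the rigidity conclusion follows classically: the equality case of the Kato inequality forces $\nabla^2 u$ to have the algebraic form of a radial harmonic function, the level sets $\{u=t\}$ are then totally umbilical and concentric, and integrating along the gradient flow identifies $u$ with a translate of $\Ca(\Omega)\abs{x}^{2-n}$, so that $\Omega$ is a round ball of radius $\Ca(\Omega)^{1/(n-2)}$. The principal obstacle is the production of the elliptic inequality for $P$ with the correct sign: balancing the Bochner--Kato contributions against the conformal weight $u^{-2(n-1)/(n-2)}$ requires a careful algebraic computation whose outcome is not automatically guaranteed by general theory.
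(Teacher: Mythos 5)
Your choice of $P$-function is exactly right: up to a factor of $n(n-1)$, your
\[
P \,=\, \frac{\abs{Du}^2}{(n-2)^2\,u^{2(n-1)/(n-2)}}
\]
is the scalar curvature ${\rm R}_\gamma$ of the paper's conformal metric $g=(1-f)^2 g_{\R^n}$, compare~\eqref{eq:R} and~\eqref{eq:|naf|}. The proposed Euclidean divergence-structure inequality is also correct: pushing the paper's $\Div_g X\geq 0$ (Lemma~\ref{le:div_X}) back to the flat metric via $\frac{1-f}{1+f}=u^{2/(n-2)}$ gives precisely $\Div(u^2\,DP)\geq 0$, so the exponent $\alpha$ you leave undetermined is $2$, and the Bochner--refined-Kato computation you sketch is the flat-metric shadow of the paper's conformal Bochner identity. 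You are also correct that the inequality \eqref{eq:main-thm2} itself already follows from the $L^p$ family of~\cite{agostiniani-mazzieri}, and that rigidity is the real content.

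The gap is in the last step: \emph{the Hopf boundary-point lemma at a point $p\in\partial\Omega$ where $P$ attains its boundary supremum does not force $P$ to be constant.} If $P\leq L$ on $\R^n\setminus\overline\Omega$, $P(p)=L$ for some $p\in\partial\Omega$, and $P\to L$ at infinity, Hopf applied at $p$ only yields that either $P$ is constant or $\partial_\nu P(p)$ is strictly negative in the inward direction; without an independent reason for that normal derivative to vanish (you have a Dirichlet condition on $u$, not a Neumann condition on $P$), there is no contradiction and the argument halts. The supremum $L$ is a \emph{limit} at infinity, not an attained interior value, so the strong interior maximum principle cannot fire either. This is exactly the obstruction the paper's spherical ansatz is designed to remove: by pulling back through the stereographic projection $\pi_\rho$ and proving in Lemma~\ref{le:w-reg} that ${\rm R}_\gamma$ (that is, $n(n-1)P$) extends to $\C^0\cap\W^{1,2}$ across the north pole $N$ of the compactified manifold $M$, the point at infinity becomes a genuine interior point at which $P$ attains the value $L$. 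Then Proposition~\ref{pro:max_pr} applies the $\W^{1,2}$-strong maximum principle of~\cite[Theorem~8.19]{gilbarg-trudinger} on the compact $M$ and concludes that ${\rm R}_\gamma$, hence $P$, is constant. Your classical endgame (umbilic concentric level sets, integrating the gradient flow) is a legitimate alternative to the paper's warped-product argument in Section~\ref{sec:equality}, but it can only start once constancy of $P$ is in hand, which your Hopf step does not deliver. To repair the proof, replace the Hopf-at-$\partial\Omega$ step with a one-point compactification at infinity and a maximum-principle argument at the added interior point, checking that $P$ and its first derivatives have the regularity needed there — this is precisely what Corollaries~\ref{cor:est-f}, \ref{cor:asympt} and Lemma~\ref{le:w-reg} supply.
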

%
%
%
%

\begin{rem}
\label{rem:PP}
In dimension $n=3$, a proof of Theorem~\ref{theo:main-thm2} has already been provided by Payne and Philippin in~\cite[Formula~(3.24)]{Pay_Phi}. This $3$-dimensional proof is based on~\cite[Theorem in Section~2]{Pay_Phi} and the well known Poincar\'{e}-Faber-Szeg\"o inequality
\begin{equation}
\label{eq:PFS}
|\Omega|\,\leq\,\frac{4\pi}{3}[\Ca(\Omega)]^3\,,
\end{equation}
which holds on every bounded domain of $\R^3$, see~\cite[Section~1.12]{Pol_Sze}.
The proof in~\cite{Pay_Phi} can be extended to any dimension $n\geq 3$ using a generalized version of~\eqref{eq:PFS} (see Theorem~\ref{theo:PFS} in the appendix)
\begin{equation}
\label{eq:PFS_2}
\left(\frac{|\Omega|}{|B^n|}\right)^{\!\!\frac{n-2}{n}}\!\!\!\leq\,\Ca(\Omega)\,,
\end{equation}
where $B^n\subset\R^n$ is the $n$-dimensional unit ball. This inequality has also been extended to conformally flat manifolds in~\cite[Theorem~1]{Fre_Sch}. 
However, this is not enough to generalize the approach of Payne and Philippin beyond the Euclidean setting, since the computations in~\cite{Pay_Phi} make use of integral identities that exploits the peculiar structure of $\R^n$. In contrast, we emphasize that the proof proposed in the present work is self contained, provides a clear geometric interpretation of the result (see Theorem~\ref{theo:main-thm2-f}), and does not depend heavily on the structure of $\R^n$. In particular, it seems that the method used in this work should be generalizable to the exterior problem on a general manifold, requiring only the nonparabolicity of the ends (so that a solution of the analogue of system~\eqref{eq:system} exists) and suitable asymptotic estimates of the potential $u$. This will be the object of further investigations.
\end{rem}

\noindent 
The proof of a slightly extended version of Theorem~\ref{theo:main-thm2}  (see Theorem~\ref{theo:main-thm-ext2})
will be presented in full details in Section~\ref{sec:proof-part2}. However, before proceeding, it is important to remark that  the new rigidity statement is obtained by exploiting the peculiar geometric features of the new method ({\em spherical ansatz}), in contrast with the method employed in~\cite{agostiniani-mazzieri} ({\em cylindrical ansatz}), for which this result was clearly out of reach. For further comments about the way the rigidity is proven and for a more detailed comparison between the two methods, we refer the reader to Section~\ref{sec:strategy}.

\subsection{Analytic consequences - Sphere Theorem I}

As a first implication of the above result we obtain the following sphere theorem, which roughly speaking says that if the strength of the electric field $|Du|$ measured on the surface of the uniformly charged body $\Omega$ is bounded above by the inverse of the so called surface radius $(|\partial \Omega| / |S^{n-1}|)^{1/(n-1)}$, then the charged body must be rotationally symmetric.
\begin{cor}[Sphere Theorem I]
\label{teo:sph_uno}
Let $(\Omega,u)$ be a regular solution to problem~\eqref{eq:system} in the sense of Definition~\ref{ass:main}, and suppose that
\begin{equation}
\label{eq:ov_uno}
\left|\frac{Du}{n-2}\right| \,\, \leq  \,\,\bigg( \frac{|S^{n-1}|}{|\partial \Omega|} \bigg)^{\!\!\frac{1}{n-1}} \, \quad \hbox{on $\partial \Omega$}\, .
\end{equation}	
Then $\partial \Omega$ is a sphere and $u$ is rotationally symmetric.
\end{cor}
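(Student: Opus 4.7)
The strategy is to exploit Theorem~\ref{theo:main-thm2} together with the integral identity~\eqref{eq:capacity} to show that the hypothesis~\eqref{eq:ov_uno} forces us into the equality case of~\eqref{eq:main-thm2}; the conclusion then follows at once from the rigidity statement already built into Theorem~\ref{theo:main-thm2}. In this sense the corollary is a purely algebraic consequence of the main theorem.

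Concretely, I would proceed in two short steps. First, setting $M:=\max_{\de\Omega}|Du/(n-2)|$, Theorem~\ref{theo:main-thm2} gives $1/\Ca(\Omega)\leq M^{n-2}$, and plugging in the upper bound on $M$ supplied by~\eqref{eq:ov_uno} yields
\begin{equation*}
\Ca(\Omega)\,\geq\,\left(\frac{|\de\Omega|}{|S^{n-1}|}\right)^{\!\!\frac{n-2}{n-1}}.
\end{equation*}
Second, I would substitute the \emph{pointwise} inequality~\eqref{eq:ov_uno} directly under the integral in the representation formula~\eqref{eq:capacity}:
\begin{equation*}
\Ca(\Omega)\,=\,\frac{1}{(n-2)|S^{n-1}|}\int_{\de\Omega}|Du|\,d\sigma\,\leq\,\frac{1}{|S^{n-1}|}\left(\frac{|S^{n-1}|}{|\de\Omega|}\right)^{\!\!\frac{1}{n-1}}|\de\Omega|\,=\,\left(\frac{|\de\Omega|}{|S^{n-1}|}\right)^{\!\!\frac{n-2}{n-1}}.
\end{equation*}

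The two chains of inequalities match, so every step must in fact be an equality. In particular $1/\Ca(\Omega)=M^{n-2}$, which is precisely the equality case of~\eqref{eq:main-thm2}, and the rigidity part of Theorem~\ref{theo:main-thm2} immediately forces $\Omega$ to be a round ball and $u$ to be rotationally symmetric. Because the argument is nothing more than pairing Theorem~\ref{theo:main-thm2} with the elementary boundary integral~\eqref{eq:capacity}, there is no genuine obstacle; the only subtle point to keep in mind is that the pointwise assumption~\eqref{eq:ov_uno} has to be used twice, once to bound the maximum $M$ and once under the integral sign, and it is the combination of these two uses that pins the capacity down to a single value and triggers rigidity.
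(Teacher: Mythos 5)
Your argument is correct and is essentially the same as the paper's own proof: both integrate the pointwise hypothesis~\eqref{eq:ov_uno} against the capacity representation~\eqref{eq:capacity}, combine with Theorem~\ref{theo:main-thm2}, observe that the resulting chain of inequalities closes up and forces equality in~\eqref{eq:main-thm2}, and then invoke its rigidity clause.
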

\begin{proof}
Dividing both sides of~\eqref{eq:ov_uno} by $|S^{n-1}|$ and integrating over $\partial \Omega$ gives
\begin{equation*}
\Ca(\Omega) \,\, = \,\, \frac{1}{(n-2) \abs{S^{n-1}}} \int_{\de\Omega} \abs{Du} \d \sigma \,\, \leq \,\, \bigg( \frac{|S^{n-1}|}{|\partial \Omega|} \bigg)^{\! -\frac{n-2}{n-1}} \, .
\end{equation*}
Combining this inequality with~\eqref{eq:main-thm2} and using again the hypothesis~\eqref{eq:ov_uno}, we obtain the following chain of inequalities
\begin{equation*}
\bigg( \frac{|S^{n-1}|}{|\partial \Omega|} \bigg)^{\!\frac{n-2}{n-1}} \,\, \leq \,\, \frac{1}{\Ca(\Omega)} \,\, \leq \,\, \left({\max_{\de\Omega}\left|\frac{Du}{n-2}\right|}\right)^{n-2} \,\, \leq \,\, \bigg( \frac{|S^{n-1}|}{|\partial \Omega|} \bigg)^{\!\frac{n-2}{n-1}} \, .
\end{equation*} 
In particular, the second inequality holds with the equality sign and the conclusion follows by the rigidity statement of Theorem~\ref{theo:main-thm2}. 
\end{proof}
\noindent The above statement should be compared with~\cite[Theorem~2]{Reichel}, where a similar sphere theorem is proven. In terms of our notation, Reichel's Theorem reads as follows:
\begin{theo}[Reichel~\cite{Reichel}]
\label{teo:reichel}
Let $(\Omega,u)$ be a regular solution to problem~\eqref{eq:system} in the sense of Definition~\ref{ass:main} and suppose that 
\begin{equation}
\label{eq:ov_reichel}
|{Du}| \,\, \equiv  \,\, c \, \quad \hbox{on $\partial \Omega$}\, ,
\end{equation}	
for some positive constant $c>0$. Then $\partial \Omega$ is a sphere and $u$ is rotationally symmetric.
\end{theo}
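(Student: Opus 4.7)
The plan is to apply the method of moving planes in the form due to Serrin, adapted to exterior Dirichlet problems. The strategy is to show, for every unit direction $e \in S^{n-1}$, that $(\Omega, u)$ is invariant under the reflection through some hyperplane orthogonal to $e$; since $\Omega$ is bounded, the common center of all the resulting symmetry planes is a single point, so $\Omega$ must be a round ball and $u$ radially symmetric about that center.

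First, for a fixed direction $e$ and each $\lambda \in \R$, I would introduce the hyperplane $T_\lambda = \{x\cdot e = \lambda\}$, the half-space $\Sigma_\lambda = \{x\cdot e > \lambda\}$, and the reflection $x^\lambda = x - 2(x\cdot e-\lambda)\,e$ through $T_\lambda$. Comparing $u$ with its reflection $u_\lambda(x) := u(x^\lambda)$ on the region $\Sigma_\lambda \setminus \overline\Omega$, both functions are harmonic there (where defined), agree on $T_\lambda$, and vanish at infinity. For $\lambda$ strictly greater than $\sup_{x\in\overline\Omega}x\cdot e$, a combination of the strong maximum principle with the asymptotic expansion $u(x) = \Ca(\Omega)|x|^{2-n}+\Oh(|x|^{1-n})$ recorded in the appendix yields $u\leq u_\lambda$ on the relevant set. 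I would then decrease $\lambda$ and let $\lambda_0$ be the smallest value for which this comparison persists. At $\lambda = \lambda_0$ one of two configurations must occur: either (i) the reflected surface $\partial\Omega^{\lambda_0}$ is internally tangent to $\partial\Omega$ at an interior point of $\Sigma_{\lambda_0}$, or (ii) $T_{\lambda_0}$ is orthogonal to $\partial\Omega$ at some common point of $T_{\lambda_0} \cap \partial \Omega$.

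In case (i), the nonnegative harmonic function $u_{\lambda_0} - u$ vanishes at an interior point of the domain where it is defined, so the strong maximum principle forces $u \equiv u_{\lambda_0}$ on the whole connected component, yielding reflection symmetry through $T_{\lambda_0}$. Case (ii) is the delicate one: here one applies Serrin's corner boundary-point lemma for two $C^2$ hypersurfaces meeting tangentially, and it is precisely at this point that the overdetermined hypothesis $|Du|\equiv c$ on $\partial\Omega$ enters, providing the first-order compatibility between $u$ and $u_{\lambda_0}$ along $T_{\lambda_0}\cap\partial\Omega$ that upgrades the boundary information to the interior conclusion $u\equiv u_{\lambda_0}$ in a neighbourhood, and then globally by analytic continuation.

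The main obstacle is the unboundedness of the domain, which rules out a verbatim copy of Serrin's interior argument. One route is to initiate the sliding from $\lambda = +\infty$ by a careful quantitative use of the decay estimates on $u$ and $|Du|$ from the appendix, so that $u \leq u_\lambda$ holds all the way out to infinity. An essentially equivalent alternative is to apply the Kelvin transform $v(y)=|y|^{2-n}u(y/|y|^2)$ and reduce to a harmonic function on a bounded domain $\Omega^\star$ containing the origin, with $v(0)=\Ca(\Omega)$ and $v|_{\partial\Omega^\star}=|y|^{2-n}$; the overdetermination $|Du|\equiv c$ pulls back to an explicit condition on $|Dv|$ along $\partial\Omega^\star$, after which Serrin's original reflection scheme applies. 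Either way, the delicate ingredient is the corner lemma at the orthogonal tangency, which is the geometric heart of the proof.
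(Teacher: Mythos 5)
The paper does not prove this theorem: it is stated as Theorem~\ref{teo:reichel}, attributed to~\cite{Reichel}, and serves purely as a point of comparison for the paper's own Sphere Theorem~I (Corollary~\ref{teo:sph_uno}), so there is no internal proof for your sketch to be measured against. Your moving-planes outline is essentially Reichel's original argument: slide hyperplanes in from infinity, use the asymptotic expansion of $u$ and $Du$ (cf.~Proposition~\ref{pro:est-u}) to start the comparison, and conclude via the strong maximum principle together with Serrin's corner lemma, the overdetermined datum $|Du|\equiv c$ entering precisely at the corner step. That is a sound, classical route and genuinely different from what the paper does: the paper obtains its related Sphere Theorem~I through the \emph{spherical ansatz}, rewriting the problem conformally as~\eqref{eq:geom-system}, proving a weak elliptic inequality for the scalar curvature ${\rm R}_\gamma$ of the compactified metric (Proposition~\ref{pro:max_pr}), applying a strong maximum principle on the closed manifold $M$, and closing with an Obata-type rigidity argument (Proposition~\ref{pro:warped}); the two statements are not equivalent (Corollary~\ref{teo:sph_uno} replaces the constancy of $|Du|$ by a pinching, and as the discussion after it explains the constant in Reichel's hypothesis is \emph{a priori} bounded below by the threshold, so one does not simply subsume the other). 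Moving planes is more elementary and self-contained; the paper's method yields a sharp quantitative inequality with a rigidity case and is advertised as more portable to non-Euclidean exteriors. One caveat on your sketch: the Kelvin-transform shortcut does not reduce to ``Serrin's original reflection scheme'' verbatim, since the transformed Dirichlet data $v=|y|^{2-n}$ on $\partial\Omega^\star$ is not constant (only rotationally invariant), so the symmetry of the boundary data relative to the candidate reflecting plane has to be argued rather than assumed; the direct exterior-domain moving-planes argument is the cleaner route, and it is what Reichel actually carries out.
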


\begin{rem}
Reichel's Theorem has been generalized in a number of directions. A different proof of Theorem~\ref{teo:reichel} has appeared in~\cite{Bia_Cir_Sal}, and similar results have been proved for other classes of elliptic inequalities~\cite{Reichel3}, for annulus-type domains~\cite{Reichel2} and for the $p$-laplacian~\cite{Gar_Sar,Bia_Cir_2}. The case of a disconnected $\Omega$ --with possibly different Dirichlet and Neumann condition on the different boundary components-- is analyzed in~\cite{Sirakov}. 
It seems interesting to ask if our Corollary~\ref{teo:sph_uno} admits similar generalizations.
We just point out that our main result extends without effort to the case in which $\Omega$ is not connected, but assuming that we have the same Dirichlet condition $u=1$ on the whole $\de\Omega$.
\end{rem}
\noindent To understand the relationship between Reichel's overdetermining assumption~\eqref{eq:ov_reichel} and our assumption~\eqref{eq:ov_uno} we observe that in virtue of our Theorem~\ref{theo:main-thm2}, the constant $c$ that appears in Reichel's hypothesis is no longer allowed to be any positive constant, but it must obey the {\em a priori} constraint 
\begin{equation*}
(n-2) \left[ \frac{1}{\Ca(\Omega) }\right]^{\frac{1}{n-2}} \!\! \leq \,\, c \,.
\end{equation*}
Using again Reichel's overdetermining condition~\eqref{eq:ov_reichel}, one can express the capacity of $\Omega$ as 
\begin{equation*}
\Ca(\Omega)  \,\, = \,\, \frac{c}{n-2} \, \bigg(\frac{|\partial \Omega|}{|S^{n-1}|} \bigg)
\end{equation*}
so that, up to some algebra, the above {\em a priori} constraint can be written as
\begin{equation*}
\bigg( \frac{|S^{n-1}|}{|\partial \Omega|} \bigg)^{\!\frac{1}{n-1}} \!\! \leq \,\, \frac{c}{n-2} \,\, \equiv \,\,
\left| \frac{Du}{n-2} \right|  \, \quad \hbox{on $\partial \Omega$}\, .
\end{equation*}
Having this in mind, we can rephrase our Corollary~\ref{teo:sph_uno} as follows.
\begin{cor}[Sphere Theorem I - Revisited]
If $(\Omega, u)$ is a regular solution to problem~\eqref{eq:system} in the sense of Definition~\ref{ass:main}, and we suppose that its gradient $|Du|$ at $\partial \Omega$ is bounded from above by the least admissible constant Neumann data in Reichel's overdetermining prescription~\eqref{eq:ov_reichel}, then $\partial \Omega$ is a sphere and $u$ is rotationally symmetric.
\end{cor}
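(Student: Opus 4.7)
The plan is to recognize that this corollary is a direct reformulation of Corollary \ref{teo:sph_uno} (Sphere Theorem I) through the correspondence between hypotheses established in the paragraph preceding the statement. The proof should therefore reduce to identifying the ``least admissible constant Neumann data'' and invoking the previous corollary.

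First, I would pin down the quantity ``least admissible constant Neumann data in Reichel's overdetermining prescription~\eqref{eq:ov_reichel}''. Given a constant $c > 0$ playing the role of $|Du|$ on $\partial\Omega$, the capacity representation \eqref{eq:capacity} forces
\begin{equation*}
\Ca(\Omega) \,=\, \frac{c}{n-2}\, \bigg(\frac{|\partial\Omega|}{|S^{n-1}|}\bigg),
\end{equation*}
and plugging this into the main Theorem \ref{theo:main-thm2} yields exactly the a priori inequality
\begin{equation*}
\bigg(\frac{|S^{n-1}|}{|\partial\Omega|}\bigg)^{\!\frac{1}{n-1}} \leq \frac{c}{n-2},
\end{equation*}
already derived in the excerpt. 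Hence the least value of $c/(n-2)$ compatible with Reichel's prescription is $(|S^{n-1}|/|\partial\Omega|)^{1/(n-1)}$.

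With this identification, the hypothesis of the corollary states precisely that
\begin{equation*}
\left|\frac{Du}{n-2}\right| \,\leq\, \bigg(\frac{|S^{n-1}|}{|\partial\Omega|}\bigg)^{\!\frac{1}{n-1}} \quad \text{on } \partial\Omega,
\end{equation*}
which is nothing but assumption \eqref{eq:ov_uno}. Applying Corollary \ref{teo:sph_uno} verbatim then delivers the conclusion that $\partial\Omega$ is a sphere and $u$ is rotationally symmetric.

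There is essentially no obstacle, since the derivation of the a priori bound on $c$ has already been carried out in the body of the paper and Corollary \ref{teo:sph_uno} is established. The only point worth checking, for the linguistic accuracy of the word ``least'', is that the bound is attained: on a round ball centered at the origin the explicit radial potential $u(x) = \Ca(\Omega)|x|^{2-n}$ saturates Theorem \ref{theo:main-thm2} and realizes the equality $c/(n-2) = (|S^{n-1}|/|\partial\Omega|)^{1/(n-1)}$, so $(n-2)(|S^{n-1}|/|\partial\Omega|)^{1/(n-1)}$ is genuinely admissible and hence is the minimum, not merely an infimum.
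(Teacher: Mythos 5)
Your proposal is correct and follows exactly the paper's line of reasoning: the paper presents this corollary as a direct rephrasing of Corollary~\ref{teo:sph_uno} via the identification of the least admissible constant $c$, which you reproduce faithfully and then invoke Corollary~\ref{teo:sph_uno}. The added check that the bound is attained on a round ball is a sensible (and correct) clarification of the word ``least'', though the paper leaves it implicit.
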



%
%
%
%
%
%
%
%
%
%
%
%
%

\subsection{Geometric consequences - Sphere Theorem II}

In order to state some purely geometric consequence of Theorem~\ref{theo:main-thm2}, we recall from~\cite[formula (2.6)]{agostiniani-mazzieri} the inequality
\begin{equation}
\label{eq:AM-1}
\max_{\de\Omega}\left|\frac{D u}{n-2}\right|\,\leq\,\max_{\de\Omega}\left|\frac{H}{n-1}\right|\,,
\end{equation}
which holds for regular solutions of~\eqref{eq:system}. Here $H$ is the mean curvature of the hypersurface $\de\Omega$ with respect to the normal pointing towards the interior of $\Omega$
and the standard flat metric of $\R^n$. We precise that, in our convention, the mean curvature $H$ is defined as the sum of the principal curvatures. Combining~\eqref{eq:AM-1} with~\eqref{eq:main-thm2} we obtain a geometric inequality (see~\eqref{eq:AM-3} below) involving only the capacity of $\Omega$ and the mean curvature of its boundary, whose equality case is characterized in terms of the rotational symmetry of $\Omega$. This latter rigidity statement answers to a question raised in~\cite{agostiniani-mazzieri} as well as~\cite{Ago_Maz_RendicontiLincei}. 
\begin{theo}
\label{teo:main_geo}
Let $\Omega \subset \R^n$ be a bounded domain with smooth boundary and $n \geq 3$. Then the following inequality holds
	\begin{equation}
	\label{eq:AM-3}
	\frac{1}{\Ca(\Omega)}  \,\leq\,    \left({\max_{\de\Omega}\left|\frac{H}{n-1}\right|}\right)^{\!n-2}     \,.
	\end{equation}
Moreover, the equality is fulfilled if and only if $\Omega$ is a round ball.	
\end{theo}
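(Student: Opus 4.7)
My plan is to derive Theorem~\ref{teo:main_geo} as a direct consequence of Theorem~\ref{theo:main-thm2} combined with the pointwise gradient/mean-curvature estimate~\eqref{eq:AM-1}. Given any bounded smooth $\Omega \subset \R^n$, classical potential theory produces a unique capacitary potential $u$ solving~\eqref{eq:system}. Up to a translation and to filling in the bounded components of $\R^n\setminus\Omega$ (which does not affect either $\Ca(\Omega)$ or the relevant boundary quantities), the pair $(\Omega,u)$ becomes a regular solution in the sense of Definition~\ref{ass:main}, so that both Theorem~\ref{theo:main-thm2} and the inequality~\eqref{eq:AM-1} from~\cite{agostiniani-mazzieri} apply. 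Chaining them yields
\begin{equation*}
\frac{1}{\Ca(\Omega)} \,\,\leq\,\, \left(\max_{\de\Omega}\left|\frac{Du}{n-2}\right|\right)^{\!n-2} \,\,\leq\,\, \left(\max_{\de\Omega}\left|\frac{H}{n-1}\right|\right)^{\!n-2},
\end{equation*}
which is precisely~\eqref{eq:AM-3}.

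For the rigidity, assume equality is attained in~\eqref{eq:AM-3}. Then both inequalities in the chain above must saturate; in particular, the first one is an equality, and the rigidity statement of Theorem~\ref{theo:main-thm2} forces $\Omega$ to be a round ball (and $u$ to be rotationally symmetric). Conversely, on a ball $B_R$ centered at the origin, the explicit capacitary potential $u(x)=\Ca(B_R)\abs{x}^{2-n}$ gives $\Ca(B_R)=R^{n-2}$, and since $H/(n-1)\equiv 1/R$ on $\partial B_R$, both sides of~\eqref{eq:AM-3} equal $R^{-(n-2)}$; this closes the equivalence.

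Hence no new analytic work is required at this stage: the entire content of Theorem~\ref{teo:main_geo} is already packaged in Theorem~\ref{theo:main-thm2} plus~\eqref{eq:AM-1}, and the main obstacle lies upstream, in the rigidity part of Theorem~\ref{theo:main-thm2}. This is perfectly consistent with the discussion in the introduction: the cylindrical ansatz of~\cite{agostiniani-mazzieri} is already sufficient to establish the geometric inequality~\eqref{eq:AM-3}, but it is only the new spherical-ansatz argument, via the rigidity in Theorem~\ref{theo:main-thm2}, that identifies its equality case with round balls and thereby answers the question left open in~\cite{agostiniani-mazzieri} and~\cite{Ago_Maz_RendicontiLincei}.
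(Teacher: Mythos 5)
Your proof is correct and matches the paper's own argument exactly: the paper derives Theorem~\ref{teo:main_geo} precisely by combining the gradient estimate~\eqref{eq:AM-1} from~\cite{agostiniani-mazzieri} with Theorem~\ref{theo:main-thm2}, and the rigidity follows because equality in~\eqref{eq:AM-3} forces equality in the first link of the chain, triggering the rigidity of Theorem~\ref{theo:main-thm2}.
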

\noindent In other words, Theorem~\ref{teo:main_geo} tells us that among regular bounded domains for which the maximum of the mean curvature of the boundary is fixed, the capacity is minimized by spheres. 
As an immediate corollary of Theorem~\ref{teo:main_geo}, we obtain the following sphere theorem, that should be compared with the classical Alexandrov Theorem for compact connected constant mean curvature hypersurfaces in Eucidean space. Here we draw the same conclusion under a pointwise pinching assumption for the mean curvature. 
\begin{cor}[Sphere Theorem II]
\label{teo:sph_due}
Let $\Omega \subset \R^n$ be a bounded domain with smooth boundary and $n \geq 3$. Suppose that
\begin{equation*}
- \, \bigg[ \frac{1}{\Ca(\Omega)} \bigg]^{\frac{1}{n-2}} \!\! \leq \,\, \frac{H}{n-1}   \,\, \leq \,\, \bigg[ \frac{1}{\Ca(\Omega)} \bigg]^{\frac{1}{n-2}} \, .
\end{equation*}	
Then $\partial \Omega$ is a sphere.
\end{cor}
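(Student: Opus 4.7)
The plan is to deduce this sphere theorem as a direct consequence of Theorem~\ref{teo:main_geo}, using the pointwise pinching hypothesis on $H$ to pinch the chain of inequalities to an equality, which in turn will trigger the rigidity statement of that theorem.

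More concretely, I would first observe that the two-sided bound in the hypothesis is precisely the statement
\begin{equation*}
\left|\frac{H}{n-1}\right|\,\leq\,\left[\frac{1}{\Ca(\Omega)}\right]^{\frac{1}{n-2}}\quad\text{on }\de\Omega\,.
\end{equation*}
Taking the maximum over $\de\Omega$ and raising both sides to the $(n-2)$-th power (which is allowed since $n\geq 3$ and both sides are nonnegative), I would obtain
\begin{equation*}
\left(\max_{\de\Omega}\left|\frac{H}{n-1}\right|\right)^{\!n-2}\,\leq\,\frac{1}{\Ca(\Omega)}\,.
\end{equation*}
Combining this with the reverse inequality~\eqref{eq:AM-3} supplied by Theorem~\ref{teo:main_geo}, the two bounds collapse to an equality.

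At that point, the rigidity part of Theorem~\ref{teo:main_geo} forces $\Omega$ to be a round ball, and in particular $\de\Omega$ is a sphere, which is the desired conclusion.

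There is essentially no obstacle here: once Theorem~\ref{teo:main_geo} is established, this corollary is a purely formal two-line consequence. The only small point to double check is that the absolute value in the hypothesis matches exactly what appears in~\eqref{eq:AM-3}, which it does by construction; so no separate sign analysis is required.
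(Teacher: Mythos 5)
Your proposal is correct and coincides with the paper's intended argument: the paper states the corollary as an ``immediate'' consequence of Theorem~\ref{teo:main_geo} without writing out a proof, and the deduction you give --- rewriting the two-sided pinching as a bound on $\max_{\de\Omega}|H/(n-1)|$, raising to the $(n-2)$-th power, and squeezing against~\eqref{eq:AM-3} to trigger the rigidity case --- is exactly the intended route.
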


\begin{rem}
This result was already provided in~\cite[Formula~(3.29)]{Pay_Phi} in dimension $n=3$.
In the case where $\Omega$ is convex and $n=3$, an easier proof is available, see~\cite{Bandle}.
\end{rem}

\noindent We observe that the relation between our Sphere Theorem II and the celebrated Alexandrov Theorem is the perfect analogous of the one described in the previous subsection between our Sphere Theorem I and Reichel's result. In order to see this, let us rephrase the Alexandrov Theorem in our setting.
\begin{theo}[Alexandrov~\cite{Aleksandrov}]
\label{teo:alex}
Let $\Omega \subset \R^n$ be a bounded domain with smooth boundary and $n \geq 3$. Suppose that
\begin{equation}
\label{eq:ov_alex}
H \,\, \equiv  \,\, c \, \quad \hbox{on $\partial \Omega$}\, ,
\end{equation}	
for some constant $c >0$. Then $\partial \Omega$ is a sphere.
\end{theo}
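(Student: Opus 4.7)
The plan is to prove this via the classical argument of A.~Ros, which derives the conclusion from the combination of the Minkowski integral identity and the Heintze--Karcher inequality. This integral-identity approach is well in the spirit of the rest of the paper and sidesteps the symmetry/moving-plane machinery of Alexandrov's original proof.

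First I record two standard identities. Placing the origin inside $\Omega$, using the outward unit normal $\nu$ on $\partial\Omega$ and the sign convention that makes $H>0$ on strictly convex portions of the boundary, the divergence theorem applied to the position vector yields
\begin{equation*}
n\, |\Omega| \,=\, \int_{\partial\Omega} \langle x, \nu\rangle\, d\sigma,
\end{equation*}
while integrating the pointwise identity $\Delta_{\partial\Omega}(|x|^2/2) = (n-1) - H\langle x, \nu\rangle$ over the closed hypersurface $\partial\Omega$ produces the Minkowski integral formula
\begin{equation*}
(n-1)\, |\partial\Omega| \,=\, \int_{\partial\Omega} H \langle x, \nu\rangle\, d\sigma.
\end{equation*}
Plugging the hypothesis $H\equiv c$ into the second identity and comparing with the first gives the exact equality
\begin{equation*}
(n-1)\,|\partial\Omega| \,=\, c\, n\, |\Omega|.
\end{equation*}

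Next I invoke the Heintze--Karcher inequality, which for any smooth bounded mean-convex domain asserts
\begin{equation*}
\int_{\partial\Omega} \frac{n-1}{H}\, d\sigma \,\geq\, n\, |\Omega|,
\end{equation*}
with equality if and only if $\partial\Omega$ is a round sphere. Specialized to $H\equiv c > 0$, this reads $(n-1)|\partial\Omega| \geq c\,n\,|\Omega|$. Combined with the identity of the previous step, equality is forced throughout Heintze--Karcher, and the rigidity clause gives that $\partial\Omega$ is a sphere, which is the desired conclusion.

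The main difficulty is quoting the Heintze--Karcher inequality together with its rigidity statement. Its classical proof studies the inward normal flow $\Phi(y,t) = y - t\nu(y)$ on $\partial\Omega \times [0,(n-1)/H(y)]$, bounds its Jacobian by the AM--GM inequality applied to the principal curvatures, and integrates to cover $\Omega$. Equality in AM--GM forces all principal curvatures at each point to coincide, so $\partial\Omega$ is totally umbilical; together with $H \equiv c$ this makes $\partial\Omega$ a round sphere. The mean-convexity hypothesis $c>0$ is exactly what is needed so that the parallel foliation actually sweeps out all of $\Omega$. Once this classical Riemannian-geometric input is in place, the rest is bookkeeping with the two displayed integral identities.
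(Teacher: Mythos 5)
Your proof is correct, but it is worth pointing out that the paper does not actually prove this statement at all: Theorem~\ref{teo:alex} is quoted purely for comparison with Corollary~\ref{teo:sph_due} and is attributed to Alexandrov's original work, whose proof proceeds by the reflection/moving-planes method. What you have written is instead Ros's proof: the divergence theorem applied to the position vector gives $n\abs{\Omega}=\int_{\de\Omega}\scal{x}{\nu}\d\sigma$, the Minkowski formula gives $(n-1)\abs{\de\Omega}=\int_{\de\Omega}H\scal{x}{\nu}\d\sigma$, and under $H\equiv c$ these combine to force equality in the Heintze--Karcher inequality $\int_{\de\Omega}\tfrac{n-1}{H}\d\sigma\geq n\abs{\Omega}$, whose rigidity case (umbilicity from equality in AM--GM along the inward normal exponential map) yields the sphere. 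All steps check out, the mean-convexity needed for Heintze--Karcher is supplied by the hypothesis $c>0$, and your sign conventions are stated explicitly and consistently. The trade-off between the two routes is the usual one: Alexandrov's reflection argument is self-contained at the level of the maximum principle and generalizes to fully nonlinear curvature equations, whereas the Ros argument is shorter and integral-identity based --- indeed closer in spirit to the monotonicity and divergence-theorem techniques used elsewhere in this paper, and it is the route that underlies the higher-order mean curvature generalizations the paper cites in the remark following the theorem --- but it outsources the hard analytic content to the Heintze--Karcher inequality together with its equality case, which you correctly flag and sketch rather than fully prove.
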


\begin{rem}
As for Reichel's Theorem, also Alexandrov's Theorem has been generalized in several ways. A similar result for the $k$-mean curvature have been proved in~\cite{Ros,Mon_Ros}, and a quantitative version of Alexandrov's Theorem appeared in~\cite{Cir_Vez}. Also, a characterization of the constant mean curvature hypersurfaces have been found in other space-forms~\cite{Alexandrov2} and in warped product manifolds (see~\cite{Brendle,Bre_Eic,Montiel}).
Corollary~\ref{teo:sph_due} follows yet another direction, and  can be interpreted as a rigidity result under a pinching condition on the mean curvature. Since the technique used in this work does not seem to depend deeply on the structure of the Euclidean space, it is natural to ask if also our result extends to other manifolds.
\end{rem}
\noindent In the above statement we drop any attempt of writing the optimal regularity assumptions. Moreover, we observe that by the compactness of $\partial \Omega$ the assumption on the sign of the constant $c$ is not restrictive. On the other hand, in virtue of Theorem~\ref{teo:main_geo}, the constant $c$ in~\eqref{eq:ov_alex} is {\em a priori} forced to obey the constraint
\begin{equation*}
\bigg[ \frac{1}{\Ca(\Omega)} \bigg]^{\frac{1}{n-2}}
\!\! \leq \,\, \frac{c}{n-1} \,\, \equiv \,\,
\left| \frac{H}{n-1} \right|  \, \quad \hbox{on $\partial \Omega$}\, .
\end{equation*}
Hence, it is clear that our Corollary~\ref{teo:sph_due} can be rephrased as follows.
\begin{cor}[Sphere Theorem II - Revisited]
If a bounded domain $\Omega \subset \R^n$  with smooth boundary and $n \geq 3$ is such that the absolute value of the mean curvature of its boundary is bounded from above by the least admissible constant in~\eqref{eq:ov_alex}, then $\partial \Omega$ is a sphere.
\end{cor}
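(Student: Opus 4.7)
The plan is to derive Corollary~\ref{teo:sph_due} as an immediate consequence of Theorem~\ref{teo:main_geo}, by squeezing the quantity $(\max_{\de\Omega} |H/(n-1)|)^{n-2}$ between two copies of $1/\Ca(\Omega)$.

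First I would rewrite the hypothesis in the form most convenient for comparison with Theorem~\ref{teo:main_geo}. The two-sided pinching
\[
-\left[\frac{1}{\Ca(\Omega)}\right]^{\frac{1}{n-2}} \,\leq\, \frac{H}{n-1} \,\leq\, \left[\frac{1}{\Ca(\Omega)}\right]^{\frac{1}{n-2}}
\]
is equivalent to the pointwise bound $|H/(n-1)| \leq [1/\Ca(\Omega)]^{1/(n-2)}$ on $\de\Omega$. Taking the maximum over $\de\Omega$ and raising to the power $n-2$ yields
\[
\left(\max_{\de\Omega}\left|\frac{H}{n-1}\right|\right)^{\!n-2} \,\leq\, \frac{1}{\Ca(\Omega)}\,.
\]

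Next I would invoke Theorem~\ref{teo:main_geo}, which provides the reverse inequality
\[
\frac{1}{\Ca(\Omega)} \,\leq\, \left(\max_{\de\Omega}\left|\frac{H}{n-1}\right|\right)^{\!n-2}\,.
\]
Combining the two, equality must hold in the inequality~\eqref{eq:AM-3}. The rigidity statement of Theorem~\ref{teo:main_geo} then forces $\Omega$ to be a round ball, so $\de\Omega$ is a sphere.

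There is no real obstacle here: the whole substance of the corollary is packaged inside Theorem~\ref{teo:main_geo}, and the only thing to check is that the pinching hypothesis is precisely the quantitative statement that saturates~\eqref{eq:AM-3} from below. The only subtle point worth a brief remark is that the sign assumption on $H$ plays no role in the conclusion (only $|H|$ enters), which is consistent with the fact that $H$ must be positive everywhere on the boundary of any ball with our convention.
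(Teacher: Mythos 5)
Your argument is correct and matches the paper's intended reasoning: the pinching hypothesis of Corollary~\ref{teo:sph_due} is exactly $\bigl(\max_{\de\Omega}\abs{H/(n-1)}\bigr)^{n-2}\leq 1/\Ca(\Omega)$, which together with inequality~\eqref{eq:AM-3} of Theorem~\ref{teo:main_geo} forces equality, and the rigidity statement there gives the sphere. The one small thing you leave implicit is the identification, noted in the paper just before the statement, that "the least admissible constant in~\eqref{eq:ov_alex}" is precisely $(n-1)[1/\Ca(\Omega)]^{1/(n-2)}$ -- this is what makes the Revisited hypothesis the same as the pinching hypothesis you actually use -- but this is a one-line observation that follows directly from~\eqref{eq:AM-3} and your proof is otherwise the same.
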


\noindent 
We have already noticed in Remark~\ref{rem:PP} that the proof of the above results does not depend on the Poincar\'{e}-Faber-Szeg\"o inequality~\eqref{eq:PFS_2}. However,~\eqref{eq:PFS_2} can be combined with the above results in order to deduce another Sphere Theorem for starshaped domains. Raising to the square both sides of~\eqref{eq:AM-1}, multiplying them by $\scal{x}{\nu}$, where $x$ is the position vector and $\nu$ is the exterior unit-normal to $\de\Omega$, and then integrating on $\de\Omega$, we obtain
$$
\int_{\de \Omega}\left|\frac{D u}{n-2}\right|^2\scal{x}{\nu}\d\sigma\,\leq\,\max_{\de\Omega}\left|\frac{H}{n-1}\right|^2\int_{\de\Omega}\scal{x}{\nu}\d\sigma
\,=\,n|\Omega|\max_{\de\Omega}\left|\frac{H}{n-1}\right|^2
$$
Using formula~\eqref{eq:capacity_alter_2} in the appendix, the term on the left hand side is equal to $|S^{n-1}|\Ca(\Omega)=n|B^n|\Ca(\Omega)$. Using the Poincar\'{e}-Faber-Szeg\"o inequality~\eqref{eq:PFS_2} to estimate $\Ca(\Omega)$, with some easy algebra we obtain the following result.
\begin{cor}[Sphere Theorem for starshaped domains]
Let $\Omega \subset \R^n$ be a bounded starshaped domain with smooth boundary and $n \geq 3$. Suppose that
	\begin{equation}
	\label{eq:conj}
	-\left(\frac{|B^n|}{|\Omega|}\right)^{\!\frac{1}{n}}\leq\frac{H}{n-1}\leq\left(\frac{|B^n|}{|\Omega|}\right)^{\!\frac{1}{n}}  
	\end{equation}
	where $B^n$ is the $n$-dimensional unit ball. 
Then $\de\Omega$ is a sphere.
\end{cor}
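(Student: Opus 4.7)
The plan is to turn the chain of inequalities sketched in the paragraph preceding the statement into a formal proof, and to handle the rigidity (equality) case by invoking the rigidity of the Poincar\'e-Faber-Szeg\"o inequality. After a harmless translation I would place the origin in the star-kernel of $\Omega$, so that $\scal{x}{\nu} \geq 0$ on $\de\Omega$. Starting from~\eqref{eq:AM-1} I would use the pointwise bound $|Du/(n-2)|^{2} \leq \max_{\de\Omega} |H/(n-1)|^{2}$, multiply by $\scal{x}{\nu}$, and integrate on $\de\Omega$ to obtain
$$
\int_{\de\Omega}\left|\frac{Du}{n-2}\right|^{2} \scal{x}{\nu}\d\sigma \,\leq\, \max_{\de\Omega}\left|\frac{H}{n-1}\right|^{2} \int_{\de\Omega}\scal{x}{\nu}\d\sigma.
$$

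Next I would compute the two integrals explicitly. On the right, the divergence theorem gives $\int_{\de\Omega}\scal{x}{\nu}\d\sigma = n|\Omega|$, while on the left, the Rellich-type identity~\eqref{eq:capacity_alter_2} identifies the integral with $|S^{n-1}|\Ca(\Omega) = n|B^n|\Ca(\Omega)$. Dividing by $n|\Omega|$ yields
$$
\frac{|B^n|}{|\Omega|}\,\Ca(\Omega) \,\leq\, \max_{\de\Omega}\left|\frac{H}{n-1}\right|^{2}.
$$
Bounding $\Ca(\Omega)$ from below by the Poincar\'e-Faber-Szeg\"o inequality $(|\Omega|/|B^n|)^{(n-2)/n} \leq \Ca(\Omega)$ of Theorem~\ref{theo:PFS} and rearranging produces
$$
\left(\frac{|B^n|}{|\Omega|}\right)^{2/n} \leq \max_{\de\Omega}\left|\frac{H}{n-1}\right|^{2},
$$
equivalently $(|B^n|/|\Omega|)^{1/n} \leq \max_{\de\Omega}|H/(n-1)|$.

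The pinching hypothesis~\eqref{eq:conj} is precisely the reverse inequality, so every step above is forced to be an equality. In particular the Poincar\'e-Faber-Szeg\"o inequality~\eqref{eq:PFS_2} is saturated, and its rigidity (recorded in Theorem~\ref{theo:PFS}) implies that $\Omega$ is a round ball, whence $\de\Omega$ is a sphere. The main point I expect to have to double-check is the rigidity clause of Poincar\'e-Faber-Szeg\"o itself: the chain only tells us that all intermediate steps are equalities, and it is the equality case of~\eqref{eq:PFS_2} that actually pins down the geometry of $\Omega$; no further pointwise comparison between $|Du|$ and $H$ is needed, which is a pleasant feature given that~\eqref{eq:AM-1} is only a max--max inequality.
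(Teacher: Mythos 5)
Your proof is correct and follows essentially the same chain of inequalities that the paper sketches in the paragraph preceding the corollary: the pointwise bound from~\eqref{eq:AM-1}, multiplication by $\scal{x}{\nu}\geq 0$ (this is where starshapedness enters, and your explicit remark that the origin should be placed in the star-kernel is a worthwhile clarification that the paper leaves implicit), the identities $\int_{\de\Omega}\scal{x}{\nu}\d\sigma=n|\Omega|$ and~\eqref{eq:capacity_alter_2}, and finally the Poincar\'e--Faber--Szeg\"o bound~\eqref{eq:PFS_2}. Your treatment of the rigidity case --- tracing the saturated chain back to equality in~\eqref{eq:PFS_2} and invoking the rigidity clause of Theorem~\ref{theo:PFS} --- correctly fills in what the paper leaves to the reader, and your observation that only the trivial pointwise consequence of the max--max inequality~\eqref{eq:AM-1} is needed is a pertinent point.
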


\noindent Recalling~\eqref{eq:PFS_2}, we see that this corollary is a stronger version of Theorem~\ref{teo:main_geo}.  Unfortunately, our proof only works for starshaped domain. It would be interesting to study if this hypotesis can be relaxed.

%
%

\section{Geometric methods and strategy of the proof}
\label{sec:strategy}

In this section we outline the strategy of the proof of the previously mentioned results and we set up some notations. This gives us the opportunity to describe in details the analogies and differences between our method and the one introduced in~\cite{agostiniani-mazzieri-14, agostiniani-mazzieri}, hoping that this comparison will be useful for further applications. 

\subsection{Spherical ansatz vs cylindrical ansatz}

%
%
%
%

The method presented in~\cite{agostiniani-mazzieri-14, agostiniani-mazzieri} relies on a geometric construction called {\em cylindrical ansatz}. One starts from the simple observation that the Euclidean metric $g_{\R^n}$ can be written as
$$
g_{\R^n}\,=\,d\abs{x}\otimes d\abs{x} +\abs{x}^{2}g_{S^{n-1}}\,=\,\abs{x}^{2}\,\Big(\abs{x}^{-2}d\abs{x}\otimes d\abs{x} +g_{S^{n-1}}\Big)
\,=\,\abs{x}^{2}\,g_{\rm cyl}\,=\,u_0^{-\frac{2}{n-2}}\,g_{\rm cyl}\,,
$$ 
where $g_{\rm cyl}$ is the cylindrical metric on $\R^n\setminus\{0\}$ and we have denoted by
$u_0(x) = \abs{x}^{2-n}$ the rotationally symmetric solution to
problem~\eqref{eq:system} when $\Omega$ is
a ball centered at the origin with radius $1$. With this in mind, given a generic solution $(\Omega,u)$ to problem~\eqref{eq:system}, one is lead to consider the ansatz metric
$$
g\,=\,u^{\frac{2}{n-2}}\, g_{\R^n}\,,
$$
and study under which conditions $g$ is actually cylindrical. To proceed, one sets $\varphi=\log(u)$ and notices that, when $(\Omega,u)$ is a rotationally symmetric solution, then $\nabla\varphi$ represents the splitting direction of the cylinder. For this reason, the strategy of the proof in~\cite{agostiniani-mazzieri-14, agostiniani-mazzieri} is to find suitable conditions under which some {\em splitting principle} applies.  When this is the case, one recovers the rotational symmetry of the solution $(\Omega,u)$.

In this work, we introduce a different geometric construction, called {\em spherical ansatz}, and we aim to investigate its consequences along the same lines as the ones just described for the {\em cylindrical ansatz}. The main difference is that this time the rigidity statements will be deduced from a suitable {\em sphere theorem} instead of a {\em splitting principle}. To be more concrete, we indicate with $S^n = \{ (z^1, \ldots, z^{n+1}) \in \R^{n+1} \,\, :   \,\, |z|^2 = 1  \, \}$ the unit sphere sitting in the Euclidean space $\R^{n+1}$ and with $N$ its north pole $(0, \dots, 0,
1)$. We let then $\pi$ be the stereographic projection that maps conformally
$S^n \setminus \{N\}$ onto the $n$-dimensional plane $ \{ (z^1, \ldots, z^{n+1}) \in \R^{n+1} \,\, :   \,\, z^{n+1} = 0  \, \}$. We set $z' = (z^1, \ldots, z^{n})$ and recall the precise
formul\ae:
\begin{align*}
  \pi \colon S^n \setminus \{N\} & \longrightarrow \R^n & \pi^{-1} \colon \R^n & \longrightarrow S^n \setminus \{N\} \\
  (z', z^{n+1}) & \longmapsto \frac{z'}{1-z^{n+1}} & x & \longmapsto \left(\frac{2x}{\abs{x}^2+1}, \frac{\abs{x}^2-1}{\abs{x}^2+1}\right) .
\end{align*}
Pulling back the standard round metric $g_{S^n}$ (i.e., the one induced by the Euclidean metric $g_{\R^{n+1}}$
on the unit sphere $S^n$) one obtains on $\R^n$ the metric $g_\text{sph} = (\pi^{-1})^* g_{S^n}$, so that 
$(\R^n, g_\text{sph})$ is isometric to a sphere, except for a missing point. The
explicit form of such metric is
\[ g_\text{sph} = (\pi^{-1})^* g_{S^n} = \left( \frac{2}{1+\abs{x}^2} \right)^{\!\!2}\! g_{\R^n} = \left( \frac{2}{1+u_0^{-\frac{2}{n-2}}} \right)^{\!2} g_{\R^n} , \]
where we recall that $u_0(x) = \abs{x}^{2-n}$. Therefore, starting from a generic regular solution $(\Omega, u)$ to problem~\eqref{eq:system}, we are lead to consider the \emph{ansatz} metric
\begin{equation}
  \label{eq:def-g}
  g = \left( \frac{2}{1+u^{-\frac{2}{n-2}}} \right)^{\!2} g_{\R^n} \,. 
\end{equation}
In order to obtain rigidity statements, we need to investigate under which conditions the Riemannian manifold $(\R^n \setminus \overline{\Omega})$ is isometric to an hemisphere.
This will imply in turns the rotational symmetry of both $\Omega$ and $u$. As anticipated, sphere theorems will play here the same role as the splitting principles in the {\em cylindrical ansatz} situation.

\begin{rem}
\label{rem:different-radiai}
It should be noticed that if the domain $\Omega$ is actually a ball of radius $\rho$, then the solution $u$ is rotationally symmetric 
and it is explicitly given by $u = \rho^{n-2} |x|^{2-n}$. On the other hand, by Proposition~\ref{pro:est-u}, one has that
\begin{equation*}
u \, = \, \Ca(\Omega)\abs{x}^{2-n} \, ,
\end{equation*}
so that the radius $\rho$ can be computed in terms of $\Omega$ according to the formula
\begin{equation*}
\rho \,\, = \,\, {[\Ca(\Omega)]}^{\frac{1}{n-2}} \,.
\end{equation*}
In this case, formula~\eqref{eq:def-g} rewrites as
\begin{align*}
 g&\, =\, \left( \frac{2}{1+{[\Ca(\Omega)]}^{-\frac{2}{n-2}}\abs{x}^2} \right)^{\!2}   \cdot \,dx^1\!\otimes dx^1 \!+ \ldots + dx^n\!\otimes dx^n \, = \,  \\
 &\,=\,{[\Ca(\Omega)]}^{\frac{2}{n-2}}\left( \frac{2}{1+\abs{\xi}^2} \right)^{\!2} \cdot \,d\xi^1\!\otimes d\xi^1 \!+ \ldots + d\xi^n\!\otimes d\xi^n \,=\,{[\Ca(\Omega)]}^{\frac{2}{n-2}} \, g_{\rm sph}\,,
\end{align*}
where in the second equality we have used the change of coordinates $\xi={[\Ca(\Omega)]}^{\frac{1}{n-2}}x$, and in the last equality we have used the definition of $g_{\rm sph}$.
This means that, in general, in what concerns the rigidity statements, we expect the manifold $(\R^n\setminus\overline{\Omega},g)$ to be isometric to a round hemisphere of radius $\rho={[\Ca(\Omega)]}^{{1}/({n-2})}$.
%
%
\end{rem}
\noindent In the case under consideration, it seems reasonable to proceed in the direction of the classical Obata rigidity theorem~\cite[Theorem~A]{obata}, according to which a necessary and sufficient condition
for a closed manifold to be isometric to a unit-radius sphere is to admit a function $f$
satisfying the condition $\nabla^2 f = -fg$. On the standard sphere
$S^n \subset \R^{n+1}$ such condition is verified by the restriction
to the sphere itself of any coordinate function of $\R^{n+1}$. So in
our case it is natural to supplement the ansatz metric $g$ with a
function $f$ that is the candidate to trigger the rigidity theorem. In
order to find such a function, we turn ourselves again to the model
case and consider the function
\begin{equation} 
\label{eq:f0}
f_0(x) \, = \,  \big( (\pi^{-1})^* z^{n+1} \big) (x) \, = \,  z^{n+1} \circ \pi^{-1}(x) \, = \,  \frac{\abs{x}^2\!-1}{\abs{x}^2\!+1} \,  = \, \frac{u_0^{-\frac{2}{n-2}} \!-1}{u_0^{-\frac{2}{n-2}}\!+1} \, .
\end{equation}
Given a generic solution $(\Omega, u)$ to~\eqref{eq:system}, it will then sound reasonable to set
\begin{equation}
  \label{eq:def-f}
  f \,\, = \,\,  \frac{u^{-\frac{2}{n-2}}\!-1}{u^{-\frac{2}{n-2}}\!+1} \,\, = \,\, -\tanh \left( \frac{\log u}{n-2} \right)
\end{equation}
and study the equations it solves.
In light of the above formul\ae, and depending on the context, it will be convenient to consider $f$ either as a function of
$u$ or as a function of the point $x$, hopefully without confusing the reader. Similarly we will possibly consider $u$ as a function of either $f$ or $x$, depending on the con\-text.
In order to understand the domain of definition as well as the range of $f$, we observe that by the classical strong maximum principle (see~\cite{evans-pde} or Appendix~\ref{ap:asympt}), when the point $x$ varies in $\R^n \setminus \overline
\Omega$, the function $u(x)$ takes values in $\ooint{0}{1}$. On the other hand, it is clear by formula~\eqref{eq:def-f} that $f$ is strictly decreasing as a function of $u$. Moreover it is
equal to $0$ on $\de \Omega$, tends to $1$ as $\abs{x} \to \infty$ and takes values in $\ooint{0}{1}$ when its argument is ranging in $\R^n \setminus \overline \Omega$. A consequence of this fact is that the level sets of both $u$ and $f$
are compact, which in turn implies that the set of regular values is
open in the codomain. By Sard's lemma it is also a set of full
measure, so in particular it is dense. Also, by construction, $f$ and
$u$ share the same level sets (although with opposite choices of
sublevel sets) and each of them is regular for either both or none of
$u$ and $f$.


As we will show in Section~\ref{sec:conformal-form}, the relation between the ansatz metric $g$ and the Euclidean metric $g_{\R^n}$ can be expressed in terms of $f$ as
$$
g=(1-f)^2g_{\R^n} \, ,
$$
so that the original problem~\eqref{eq:system} can be rewritten in terms of $f$ and $g$ as

\begin{equation}
\label{eq:geom-system}
\def\arraystretch{1.8}
\left\{ \begin{array}{r@{}c@{}ll}
\Delta_g f \,&\,=\, &\,-n \left(\frac{\abs{\nabla f}_g^2}{1-f^2} \right) \, f & \text{in } \R^n \setminus \overline\Omega \, ,
\\
\Ric \,&\, = \,&\, \frac{n-2}{1-f} \,\, \nabla^2 f \,\, + \,\, \frac{n-1-f}{1-f} \, \left(\frac{\abs{\nabla f}_g^2 }{1-f^2}\right) \,g & \text{in } \R^n \setminus \overline\Omega  \, ,
\\
 f \,&\, = \,&\, 0 & \text{on } \de \Omega \, ,
\\
f(x) \,&\,\to\,&\, 1 & \text{as } \abs{x} \to +\infty \, .
\end{array} \right.
\end{equation}
Before proceeding it is important to notice that the second equation in the above system, corresponds to the identity $\Ric_{g_{\R^n}}=0$, that was hidden in the original formulation of problem~\eqref{eq:system}. In order to simplify the references for the forthcoming analysis, we observe that in the new conformal framework the counterpart of Definition~\ref{ass:main} is given by the following
\begin{defi}
  \label{ass:fg}
  A triple $(\Omega,f,g)$ is said to be a {\em regular solution} to system~\eqref{eq:geom-system} if
  \begin{itemize}
  \item $\Omega  \subset \R^n$, $n \geq 3$, is a bounded open domain. Without loss of generality, we also assume that $\Omega$ contains the origin and $\R^n\setminus\Omega$ is connected.
  \item The function
  $f \colon \R^n \setminus \Omega \to \coint{0}{1}$ and the Riemannian
  metric $g$ defined on $\R^n \setminus \Omega$ satisfy system~\eqref{eq:geom-system}. The function $f$ and the coefficients of the metric $g$ --whenever the latter are expressed with respect to standard Euclidean coordinates-- are $\C^{\infty}$ in $\R^n\setminus\Omega$, and analytic in $\R^n\setminus\overline{\Omega}$.
\end{itemize}
\end{defi}
\noindent In order to make the new formulation more familiar, we observe that taking the trace of the second equation in~\eqref{eq:geom-system} one finds that the scalar curvature ${\rm R}$ of $g$ verifies
\begin{equation}
\label{eq:R}
{\rm R}\,=\,n(n-1)\,\frac{\abs{\nabla f}_g^2}{1-f^2}\,.
\end{equation}
As already noticed, when $\Omega$ is a ball, we have that $g$ is the spherical metric, hence ${\rm R}$ is constant on the whole $\R^n\setminus\Omega$. This suggest that, when looking for rigidity results, it is natural to investigate under which conditions one can conclude that the quantity 
$$
\frac{\abs{\nabla f}_g^2}{1-f^2} \,\, = \,\, u^{-2\frac{n-1}{n-2}}\left| \frac{Du}{n-2} \right|^{2} 
$$ 
is constant. In fact, this quotient will play the role of what is called a $P$-function in the literature about overdetermined boundary value problems.


\subsection{Outline of the proof}

\label{sub:outline}

We are now in the position to outline the analysis of the  regular solutions of system~\eqref{eq:geom-system}, focussing on the key points. This gives us the opportunity to present some intermediate results that we believe of some independent interest. It is important to keep in mind that these results are based on Definition~\ref{ass:fg} and have no ``knowledge'' of the original function $u$.

Combining the Bochner formula with the equations in~\eqref{eq:geom-system}, we prove  in Section~\ref{sec:estimates}
that the vector field
\begin{equation}
  \label{eq:def-x}
  X = \frac{1}{(1-f){(1+f)}^{n-1}} \left( \nabla \abs{\nabla f}_g^2 - \frac{2}{n} \Delta_g f \nabla f \right) = \frac{1}{{(1+f)}^{n-2}}  \nabla\left( \frac{\abs{\nabla f}_g^2}{1-f^2}\right)
\end{equation}
has nonnegative divergence. This fact admits two different
interpretations, which we leverage to obtain two different classes of
results, one based on the divergence theorem and the other based on the strong maximum principle.

\medskip

\noindent {\bf Monotonicity via divergence theorem.}
On one hand, one can integrate $\Div_g X$ between two different level sets
of $f$ and obtain by Stokes' theorem that the quantity
$\int_{\{f=s\}} \scal{X}{\nu_g}_g \d \sigma_g$ is monotonic. In turns, this fact leads to the monotonicity of the function 
\begin{equation}
  \label{eq:phi}
 [0,1) \, \ni \,  s \,\, \longmapsto \,\, \Phi(s) \, = \,  \frac{1}{{(1-s^2)}^\frac{n+2}{2}} \int_{\{f=s\}} \abs{\nabla f}_g^3 \d \sigma_g \, ,
\end{equation}
which is the content of the following theorem
(notice that the fact that $\Phi$ is well defined is an easy consequence of the analyticity of $f$ and $g$, as we will prove in Section~\ref{sec:integral_id}).
%
%
%
%
%
%
%
%
%
%
\begin{theo}
	\label{theo:main-thm-f}
  Let $(\Omega,f,g)$ be a regular solution to problem~\eqref{eq:geom-system} in the sense of Definition~\ref{ass:fg}. Then $\Phi$ is
  differentiable and, for any $s\in\coint{0}{1}$, it holds
	\begin{equation}
	\label{eq:main-thm-f}
	{(1-s^2)}^\frac{n+2}{2} \Phi'(s)= -2 \left[ \int_{\{f=s\}} \abs{\nabla f}_g^2 H_g \d \sigma_g +(n-1)\frac{s}{1-s^2} \int_{\{f=s\}} \abs{\nabla f}_g^3 \d \sigma_g \right] \leq 0 \, ,
	\end{equation}
where $H_g$ is the mean curvature of the hypersurface $\{f=s\}$ computed with respect
to the normal pointing towards $\{f \geq s\}$ and the
metric $g$.	In fact, the left hand side is increasing in $s$ and it tends to zero as $s \to 1$. 
Moreover, $\Phi'(s)=0$ for some $s\in\coint{0}{1}$ if and only if $\Omega$ is a ball
and $f$ is rotationally symmetric.
\end{theo}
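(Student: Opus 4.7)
The plan is to derive the identity~\eqref{eq:main-thm-f} from a single coarea-type formula, and then to extract the sign and the rigidity statement from the nonnegativity of $\Div_g X$ announced in~\eqref{eq:def-x}.

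The well-definedness of $\Phi$ on $\coint{0}{1}$ is ensured by the analyticity of $f$ and $g$ outside $\overline\Omega$, which keeps the critical set of $f$ small enough for the level-set integrals to depend smoothly on $s$ (a careful treatment is anticipated in Section~\ref{sec:integral_id}). The main computational tool is the identity
\[
\frac{d}{ds}\int_{\{f=s\}}\langle V,\nu_g\rangle_g\d\sigma_g \,=\, \int_{\{f=s\}}\frac{\Div_g V}{\abs{\nabla f}_g}\d\sigma_g, \qquad \nu_g=\frac{\nabla f}{\abs{\nabla f}_g},
\]
valid for every smooth vector field $V$ and obtained by combining the divergence theorem on $\{f\leq s\}$ with the coarea formula. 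Taking $V=\abs{\nabla f}_g^2\,\nabla f$, whose boundary integrand is precisely $\abs{\nabla f}_g^3$, and inserting both the tangential splitting $\Delta_g f = \nabla^2 f(\nu_g,\nu_g)+H_g\abs{\nabla f}_g$ (valid on $\{f=s\}$ because $f$ is constant there) and the first equation of~\eqref{eq:geom-system}, one obtains
\[
\frac{d}{ds}\int_{\{f=s\}}\abs{\nabla f}_g^3\d\sigma_g \,=\, -2\int_{\{f=s\}}H_g\abs{\nabla f}_g^2\d\sigma_g -\frac{3ns}{1-s^2}\int_{\{f=s\}}\abs{\nabla f}_g^3\d\sigma_g.
\]
Dividing by $(1-s^2)^{(n+2)/2}$ and accounting for the derivative of the prefactor then yields~\eqref{eq:main-thm-f}.

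To establish the sign, the nonnegativity $\Div_g X\geq 0$ proven in Section~\ref{sec:estimates} comes in. Expanding $\partial_{\nu_g}(\abs{\nabla f}_g^2/(1-f^2))$ with the same tangential splitting and the PDE shows that
\[
\int_{\{f=s\}}\langle X,\nu_g\rangle_g\d\sigma_g \,=\, \frac{(1-s^2)^{n/2}}{(1+s)^{n-2}}\,\Phi'(s).
\]
Denoting the left-hand side by $\Psi(s)$, the divergence theorem on $\{s\leq f\leq \tau\}$ gives $\Psi(\tau)-\Psi(s)=\int\Div_g X\d v_g \geq 0$, so $\Psi$ is nondecreasing on $\coint{0}{1}$. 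The asymptotic expansion of the capacitary potential at infinity (Proposition~\ref{pro:est-u}), once transported to the $(f,g)$ setting, forces $\Psi(s)\to 0$ as $s\to 1$, hence $\Psi\leq 0$ throughout $\coint{0}{1}$, and therefore $\Phi'\leq 0$.

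The hardest part is the rigidity. Assume $\Phi'(s_0)=0$ for some $s_0\in\coint{0}{1}$; then $\Psi(s_0)=0$, and combined with $\Psi\leq 0$, $\Psi$ nondecreasing, and $\Psi(1^-)=0$ this forces $\Psi\equiv 0$ on $\coint{s_0}{1}$. Via the same Stokes identity, $\Div_g X\equiv 0$ on the open set $\{f>s_0\}$, and by the analyticity of $f$ and $g$ this vanishing propagates to all of $\R^n\setminus\overline\Omega$. The real obstacle is to translate this pointwise identity into rotational symmetry: one has to revisit the Bochner/Ricci chain of Section~\ref{sec:estimates} that produced $\Div_g X\geq 0$, single out the tensorial equality case it encodes, and verify that it pins $\nabla^2 f$ down to the ``round'' form exhibited by the spherical model~\eqref{eq:f0}. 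Once this is achieved, the level sets of $f$ are totally umbilic, $(\R^n\setminus\overline\Omega,g)$ is isometric to a portion of a round sphere in the spirit of Obata's theorem, and pulling this back through~\eqref{eq:def-f}--\eqref{eq:def-g} yields that $\Omega$ is a round ball and $u$ (hence $f$) is rotationally symmetric.
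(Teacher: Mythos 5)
Your plan is essentially the paper's own argument, and the computations check out: taking $V=\abs{\nabla f}_g^2\nabla f$ and combining $\Delta_g f=\nabla^2 f(\nu_g,\nu_g)+H_g\abs{\nabla f}_g$ with the first equation of~\eqref{eq:geom-system} does give the coefficient $-3ns/(1-s^2)$, dividing by $(1-s^2)^{(n+2)/2}$ and accounting for the prefactor reproduces~\eqref{eq:main-thm-f}, and your identity $\int_{\{f=s\}}\scal{X}{\nu_g}_g\d\sigma_g = (1-s^2)^{n/2}(1+s)^{-(n-2)}\Phi'(s)$ together with ``$\Psi$ nondecreasing and $\Psi(1^-)=0$'' is exactly the content of Lemma~\ref{le:Psi} and Proposition~\ref{pro:almost-main-f}, rearranged. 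The rigidity chain ($\Div_g X\equiv 0$ by unique continuation, then $\nabla^2 f=\lambda g$, then Section~\ref{sec:equality}) is also the paper's.

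The one place where the proposal asserts rather than proves is the ``coarea-type formula''
\[
\frac{d}{ds}\int_{\{f=s\}}\langle V,\nu_g\rangle_g\d\sigma_g \,=\, \int_{\{f=s\}}\frac{\Div_g V}{\abs{\nabla f}_g}\d\sigma_g ,
\]
and this is where the genuine work of Section~\ref{sec:integral_id} lives. Smallness of the critical set of $f$ is not by itself enough to make the level-set integral differentiable in $s$ at critical values; what the paper does is reduce $(1-s^2)\Phi(s)$ to a \emph{bulk} integral via the divergence theorem, apply coarea plus the fundamental theorem of calculus, and then establish continuity of the inner integrand by expressing it in terms of $\int_{\{f>s\}}\Div_g X\,dv_g$ --- a bulk integral of a \emph{nonnegative} density, monotone in $s$ --- so that a measure-theoretic continuity lemma (absolute continuity w.r.t.\ $v_g$, level sets of $v_g$-measure zero) applies. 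Without that reduction, the one-line differentiation formula is not justified at nonregular values of $s$. Similarly, the rigidity step ``$\nabla^2 f=\lambda g$ forces roundness'' is sketched in spirit but relies on the warped-product reconstruction and the smoothness of $\gamma$ across the north pole carried out in Proposition~\ref{pro:warped} and the lemma that follows it.
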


\noindent Translating Theorem~\ref{theo:main-thm-f} back in terms of $u$ and $g_{\R^n}$, one obtains the following result.

\begin{theo}
	\label{theo:main-thm}
	Let $(\Omega,u)$ be a regular solution to problem~\eqref{eq:system} in the sense of Definition~\ref{ass:main}. Then, for all $t\in\ocint{0}{1}$, it holds
	\begin{equation}
	\label{eq:main-thm}
	\int_{\{u=t\}} \abs{D u}^2 \left[ \frac{H}{n-1} - \frac{\abs{D u}}{(n-2)u} \right] \d \sigma \geq 0 .
	\end{equation}
Moreover, equality is fulfilled for some $t\in\ocint{0}{1}$ if and only if $\Omega$ is a ball
	and $u$ is rotationally symmetric.
\end{theo}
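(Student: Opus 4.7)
The proof proceeds by rewriting the identity of Theorem~\ref{theo:main-thm-f}, stated on the Riemannian manifold $(\R^n\setminus\overline\Omega,g)$, back in terms of the original Euclidean data $u$, $|Du|$, and $H$. The translation goes through the substitution $f=-\tanh(\log u/(n-2))$ together with the conformal relation $g=(1-f)^2 g_{\R^n}$ recorded in Section~\ref{sec:conformal-form}. Since $s\leftrightarrow t$ is a bijection between $[0,1)$ and $(0,1]$ with $s=0\leftrightarrow t=1$, the two theorems address exactly the same level sets.

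First I would record three conformal formulas valid on any regular level set $\Sigma_s=\{f=s\}=\{u=t\}$. From $|\nabla f|_g^2=(1-f)^{-2}|Df|^2$ together with the chain-rule identity $|Df|=\tfrac{1-f^2}{(n-2)u}|Du|$, one obtains $|\nabla f|_g=\tfrac{1+f}{(n-2)u}|Du|$. The conformal scaling of the induced metric gives $d\sigma_g=(1-f)^{n-1}d\sigma$. Finally, the standard formula for the conformal change of mean curvature applied with $e^\omega=1-f$ yields
\[
H_g \,=\, \frac{1}{1-f}\left(H \,-\, \frac{(n-1)(1+f)|Du|}{(n-2)u}\right),
\]
where $H$ is the Euclidean mean curvature of $\Sigma_s$ in the paper's convention.

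Substituting these identities into Theorem~\ref{theo:main-thm-f} and using that $f=s$ and $u=t$ are constant on $\Sigma_s$, one can pull all conformal factors out of the integrals. A direct algebraic simplification yields
\begin{align*}
\int_{\Sigma_s}|\nabla f|_g^2 H_g\,d\sigma_g &+ \frac{(n-1)s}{1-s^2}\int_{\Sigma_s}|\nabla f|_g^3\,d\sigma_g \\
&= \frac{(n-1)(1+s)^2(1-s)^{n-2}}{(n-2)^2 t^2}\int_{\{u=t\}}|Du|^2\left[\frac{H}{n-1}-\frac{|Du|}{(n-2)u}\right]d\sigma.
\end{align*}
The essential algebraic cancellation is that the $(1+f)|Du|/((n-2)u)$ piece of $H_g$ contributes a coefficient $-(n-1)(1+s)/((n-2)t)$ to $\int|Du|^3\,d\sigma$, whereas the prefactor $(n-1)s/(1-s^2)$ of the $|\nabla f|_g^3$-integral contributes $+(n-1)s/((n-2)t)$; their sum telescopes to $-(n-1)/((n-2)t)$, which is precisely the coefficient needed to form the integrand of~\eqref{eq:main-thm}. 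Since the overall factor $(n-1)(1+s)^2(1-s)^{n-2}/((n-2)^2 t^2)$ is strictly positive for $s\in[0,1)$, the inequality $(1-s^2)^{(n+2)/2}\Phi'(s)\leq 0$ from Theorem~\ref{theo:main-thm-f} translates, via the identity $(1-s^2)^{(n+2)/2}\Phi'(s)=-2[\,\cdot\,]$, into the claimed $\int|Du|^2[H/(n-1)-|Du|/((n-2)u)]d\sigma\geq 0$.

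For the rigidity, equality in~\eqref{eq:main-thm} at some $t\in(0,1]$ corresponds to the vanishing of $\Phi'(s)$ at the corresponding $s\in[0,1)$, which by the rigidity part of Theorem~\ref{theo:main-thm-f} happens if and only if $\Omega$ is a ball and $u$ is rotationally symmetric. The main technical obstacle is the careful application of the conformal change of mean curvature formula, in particular the tracking of sign conventions between $H_g$ and $H$; once this is in order, the remainder is the short algebraic cancellation described above.
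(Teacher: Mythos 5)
Your proof is correct and follows essentially the same strategy as the paper: translate the integral identity of Theorem~\ref{theo:main-thm-f} back into Euclidean data using $g=(1-f)^2\,g_{\R^n}$, $f=-\tanh(\log u/(n-2))$, $d\sigma_g=(1-f)^{n-1}d\sigma$, and a formula for $H_g$, then observe that the overall conformal factor is positive. The only cosmetic difference is that you obtain $H_g=\frac{1}{1-f}\bigl(H-\frac{(n-1)(1+f)|Du|}{(n-2)u}\bigr)$ in one step from the standard conformal transformation law for mean curvature, whereas the paper derives the equivalent expression from its coordinate formulas for $\Delta_g f$ and $\nabla^2 f(\nabla f,\nabla f)$; both routes produce the same prefactor, since your $\tfrac{(n-1)(1+s)^2(1-s)^{n-2}}{(n-2)^2t^2}$ equals the paper's $\tfrac{(n-1)(1+s)^n}{(n-2)^2}$ once one uses $t^{-2}=\bigl(\tfrac{1+s}{1-s}\bigr)^{n-2}$.
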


\noindent The proof of formula~\eqref{eq:main-thm-f} and its equivalence with~\eqref{eq:main-thm} is treated in
Section~\ref{sec:integral_id}. The equality case is shown to imply that $\Div_g X = 0$,
which in turn implies that $\nabla^2 f = \lambda g$, for a certain
function $\lambda \in \C^\infty(M)$. This can be used as
the start to trigger a rigidity argument, which is fully discussed in
Section~\ref{sec:equality}. It should be noticed that Theorem~\ref{theo:main-thm} is not a new result. The original proof appeared in~\cite{agostiniani-mazzieri-14} as an application of the {\em cylindrical ansatz}, and a second proof of the same result, based on some basic inequalities for symmetric functions, is shown in~\cite{bianchini-ciraolo}. 
We also remark that Theorem~\ref{theo:main-thm} has been vastly extended in~\cite{agostiniani-mazzieri}. In fact, our function $\Phi(s)$, rewritten in terms of $u$ and $g_{\R^n}$, coincides, up to a constant, with the function $U_3(t)$ defined as in~\eqref{eq:U_p}. As stated in the introduction, in~\cite{agostiniani-mazzieri} the {\em cylindrical ansatz} is used to prove the monotonicity of an entire family of functions $U_p(t)$ indexed on $p\geq 2-1/(n-1)$, while it seems that the {\em spherical ansatz} helps to recover the monotonicity only for $p=3$.

\medskip

\noindent {\bf Rigidity via strong maximum principle.} On the other hand, the divergence of $X$ can be seen as a positive definite elliptic differential operator of the second order in
divergence form, applied to the function
\begin{equation*}
\frac{\abs{\nabla f}_g^2}{1-f^2} \,\,=\,\,\frac{\rm R}{n(n-1)}\, .
\end{equation*}
More explicitly, recalling the definition of $X$ given in~\eqref{eq:def-x}, the inequality $\Div_g X\geq 0$ translates in the following elliptic inequality for the scalar curvature ${\rm R}$ of $g$
\begin{equation}
\label{eq:ellin-R}
\Delta_g {\rm R} \,-\,(n-2)\Scal{\frac{\nabla f}{1+f}}{\nabla {\rm R}}_{\!g}\,\,\geq\,\, 0\,.
\end{equation}
This is the starting point of our second approach, that exploits a peculiar feature of the {\em spherical ansatz}, namely the one point compactification.
In fact, the present geometric construction allows one to compactify the space $\R^n\setminus\Omega$ 
adding the north pole $N$ to its preimage via the stereographic projection. Before setting up some notations, we recall from Remark~\ref{rem:different-radiai} that, in the rigidity statements, we ultimately expect $\R^n\setminus\overline{\Omega}$ to be an hemisphere of radius $\rho={[\Ca(\Omega)]}^{{1}/({n-2})}$, so that it is natural to consider the stereographic projection
\begin{equation*}
  \pi_\rho \colon S_\rho^n \setminus \{N\} \,, \longrightarrow \R^n \qquad
  (z', z^{n+1}) \longmapsto \frac{\rho \,z'}{\rho-z^{n+1}} \,  .
\end{equation*}
where $S_\rho^n$ is the standard sphere of radius $\rho={[\Ca(\Omega)]}^{{1}/({n-2})}$ and $N =  (0,\ldots , 0, \rho) \in \R^{n+1}$ is its north pole.
The space
\begin{equation}
\label{eq:M}
M \,\, = \,\, \pi_{\rho}^{-1}(\R^n \setminus \Omega)\cup\{N\} \, \subset \, S_\rho^{n} \, 
\end{equation}
is now compact. 
On $M$ we define the function $\phi:M\to\R$ and the metric $\gamma$ respectively as the pull-back of $f$ and $g$, suitably extended through the north pole. More precisely, we set
\begin{equation}
\label{eq:phi-gamma}
\def\arraystretch{1.5}
\phi(y)\,=\,
\left\{ \begin{array}{ll}
f\circ\pi_\rho(y) & \mbox{if }y\in\pi_\rho^{-1}(\R^n \setminus\Omega)\,,
\\
1 & \mbox{if }y=N\,,
\end{array}\right.
\qquad\quad
\gamma_{|_{y}}\,=\,
\left\{ \begin{array}{ll}
{(\pi_\rho^* g)}_{|_{y}} & \mbox{if }y\in\pi_\rho^{-1}(\R^n \setminus\Omega)\,,
\\
{(g_{S_\rho^n})}_{|_N} & \mbox{if }y=N\,.
\end{array}\right.
\end{equation}
Using the asymptotic estimates on the function $f$ and its derivative, it is easy to deduce (see Section~\ref{sec:proof-part2}) that $\phi \in \C^2(M)$, whereas the metric $\gamma$ has $\C^2$ regularity in a neighborhood of the north pole. In particular, its scalar curvature ${\rm R}_\gamma$ is well-defined and continuous on the whole $M$. It is then possible to extend the validity of~\eqref{eq:ellin-R} at the north pole. In fact, we will show that ${\rm R}_\gamma$ satisfies the elliptic inequality
\begin{equation}
\label{eq:ellin-Rgamma}
\Delta_\gamma {\rm R}_\gamma \,-\,(n-2)\Scal{\frac{\nabla \phi}{1+\phi}}{\nabla {\rm R}_\gamma}_{\!\gamma}\,\,\geq\,\, 0
\end{equation}
in the $\mathcal{W}^{1,2}(M)$--sense.
This allows to invoke a strong maximum principle, which leads to the following result.	
	
\begin{theo}
  \label{theo:main-thm2-f}
Let $(\Omega,f,g)$ be a regular solution to problem~\eqref{eq:geom-system} in the sense of Definition~\ref{ass:fg}, and let $(M,\gamma)$ be defined by~\eqref{eq:M} and~\eqref{eq:phi-gamma}, respectively. Then, for every
  $y \in M$, it holds
  \begin{equation} 
  \label{eq:main-thm2-f}
  {\rm R}_\gamma(y) \leq \max_{\de M} {\rm R}_\gamma \,. 
  \end{equation}
Moreover, if the equality is fulfilled for some $y \in M\setminus\de M=\pi^{-1}(\R^n \setminus \overline{\Omega})\cup\{N\}$, then $(M,\gamma)$ is isometric to an hemisphere of radius ${[\Ca(\Omega)]}^{1/(n-2)}$.
\end{theo}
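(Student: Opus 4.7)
The plan is to derive the inequality from the elliptic inequality \eqref{eq:ellin-Rgamma} for $R_\gamma$ on the compactified manifold $M$, together with the weak strong maximum principle, so as to accommodate the potential lack of smoothness at the north pole $N$. As a preliminary step, I would verify using the asymptotic expansions of $u$ and its derivatives at infinity (as contained in Proposition~\ref{pro:est-u}) that, after pulling back via the stereographic projection $\pi_\rho$, the function $\phi$ is $\C^2(M)$ and the metric $\gamma$ is $\C^2$ in a neighborhood of $N$. Consequently, $R_\gamma$ is continuous on $M$ and the coefficient $\nabla \phi/(1+\phi)$ of the operator $L = \Delta_\gamma - (n-2)\langle \nabla \phi/(1+\phi),\nabla\cdot\rangle_\gamma$ is bounded on the whole of $M$.

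Next, I would establish that the elliptic inequality $L R_\gamma \geq 0$ holds in the $\W^{1,2}(M)$ sense on all of $M$. Away from $N$ this is precisely \eqref{eq:ellin-R}, which is the divergence-form restatement of $\Div_g X \geq 0$ from Section~\ref{sec:estimates}. To carry this across $N$, I would exploit the fact that a single point has zero $\W^{1,2}$-capacity when $n \geq 3$: logarithmic cutoff functions converging to the constant $1$ allow us to test the inequality against any nonnegative $\W^{1,2}(M)$ test function, and the bounded-coefficient structure of $L$ together with the $\C^2$ regularity of $R_\gamma$ guarantees that the error terms vanish in the limit.

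Once $L R_\gamma \geq 0$ is established on $M$ in the weak sense, the conclusion \eqref{eq:main-thm2-f} is immediate from the weak maximum principle. For the rigidity part, assuming equality is attained at some interior point $y_0$, the strong maximum principle forces $R_\gamma$ to be constant on $M$, and by the identification \eqref{eq:R}, the quantity $|\nabla \phi|_\gamma^2/(1-\phi^2)$ is constant as well. Inspecting the definition \eqref{eq:def-x} of $X$ and retracing the Bochner computation of Section~\ref{sec:estimates}, equality in $\Div_g X \geq 0$ is expected to force the traceless Hessian of $f$ with respect to $g$ to vanish, i.e.\ $\nabla^2 f = \lambda\, g$ for some function $\lambda$. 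Substituting this back into \eqref{eq:geom-system} and using $\phi \to 1$ at $N$ together with $\phi = 0$ on $\partial M$, I would obtain an Obata-type equation for $\phi$ of the form $\nabla^2 \phi = -c(\phi - a)\gamma$ on $(M,\gamma)$; classical hemisphere rigidity (modelled on \cite{obata}, restricted to manifolds with totally geodesic boundary on which $\phi$ attains its minimum) then identifies $(M,\gamma)$ with a round hemisphere, whose radius is pinned down to be $[\Ca(\Omega)]^{1/(n-2)}$ by the asymptotic behaviour of $u$ at infinity, as recorded in Remark~\ref{rem:different-radiai}.

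The main obstacle will be the weak extension of \eqref{eq:ellin-Rgamma} through $N$ with only $\C^2$ regularity of $\gamma$ at that point, followed by the translation of the equality case $\Div_g X = 0$ into a clean Obata-type equation on the compactified manifold: in particular, one has to track carefully how the conformal factor $(1-f)^2$ degenerates at infinity/at $N$, and verify that $\phi$ (or a suitable affine reparametrization of it) satisfies precisely the equation required to trigger the hemisphere rigidity theorem.
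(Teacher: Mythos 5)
Your argument for the inequality \eqref{eq:main-thm2-f} follows the paper quite closely: verify the $\C^2$ regularity of $\gamma$ and $\phi$ at $N$, establish the weak form of $\Delta_\gamma R_\gamma - (n-2)\langle\nabla\phi/(1+\phi),\nabla R_\gamma\rangle_\gamma\geq 0$ on all of $M$ via a removable-singularity/cutoff argument at the north pole, and then apply the strong maximum principle. The paper (Lemma~\ref{le:w-reg} and Proposition~\ref{pro:max_pr}) uses a decomposition $\psi=\alpha_T+\beta_T$ with $|\nabla\alpha_T|_\gamma\leq CT$ supported near $N$, rather than logarithmic cutoffs; both are standard incarnations of ``a point has zero $2$-capacity for $n\geq 2$.'' One slip worth flagging: you write that the error terms vanish because of ``the $\C^2$ regularity of $R_\gamma$,'' but $\gamma\in\C^2$ only gives $R_\gamma\in\C^0$; what the argument actually requires, and what the paper proves with a separate estimate using $\nabla R_\gamma = n(n-1)(1+f)^{n-2}X$ and the asymptotics of $X$, is $R_\gamma\in\W^{1,2}(M)$. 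You should make that step explicit rather than assuming $R_\gamma$ is classically differentiable at $N$.

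Your rigidity argument, by contrast, takes a genuinely different route from the paper. Once $R_\gamma$ is constant, both you and the paper obtain $X\equiv 0$ and hence $\nabla^2 f=\lambda g$ from Lemma~\ref{le:div_X}. You then substitute into the first equation of \eqref{eq:geom-system} to get $\lambda=-cf$ with $c=|\nabla f|_g^2/(1-f^2)$ constant, i.e.\ the Obata equation $\nabla^2 f=-cf\,g$ (your tentative shift ``$-a$'' is in fact $a=0$, and one also computes $\Ric=(n-1)c\,g$, so $(M,\gamma)$ is Einstein); $\de M=\{\phi=0\}$ is then totally geodesic, and you invoke a boundary Obata/Reilly-type theorem. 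The paper instead proceeds by hand in Proposition~\ref{pro:warped}: it shows $|\nabla f|_g$ is a function of $f$ alone, solves an ODE to identify $(\R^n\setminus\Omega,g)$ with the warped product $dr\otimes dr+\cos^2(r)\,g_{\de\Omega}$ on $\coint{0}{\pi/2}\times\de\Omega$, and then uses a normal-coordinates lemma at the cone point $N$ (from~\cite{caochen3}) to conclude $(\de\Omega,g_{\de\Omega})$ is a round sphere. Your Obata-type route is cleaner if the manifold-with-totally-geodesic-boundary version of Obata's theorem is taken as known, but it is not quite a black-box citation of~\cite{obata}, which concerns closed manifolds; one would have to double $(M,\gamma)$ across $\de M$ (using the total geodesy) or reproduce a Reilly-type argument, and in either case verify that the merely $\C^2$ regularity of $\gamma$ at $N$ is sufficient. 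The paper's warped-product construction avoids this by reducing the rigidity of the cross-section to a purely local normal-coordinate computation.
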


\noindent Translating Theorem~\ref{theo:main-thm2-f} in terms of $u$ and $g_{\R^n}$, we obtain the following extended version of Theorem~\ref{theo:main-thm2} in the introduction.

\begin{theo}
\label{theo:main-thm-ext2}
Let $(\Omega,u)$ be a regular solution to problem~\eqref{eq:system} in the sense of Definition~\ref{ass:main}. Then, the following two inequalities hold
\begin{align}
\label{eq:theo-ext-1}
	\frac{1}{\Ca(\Omega)} \,\leq\,\left({\max_{\de\Omega}\left|\frac{Du}{n-2}\right|}\right)^{n-2} \quad \hbox{and} \qquad
{\frac{|D u|}{ u^{\frac{n-1}{n-2}}}}(x)\,\leq\,{\max_{\de\Omega}\left|{Du}\right|} \, ,
\end{align}
for every $x \in \R^n \setminus \overline{\Omega}$. Moreover, if the equality is fulfilled in either the first or the second inequality, for some $x \in \R^n \setminus \overline{\Omega}$, then $\Omega$ is a ball
	and $u$ is rotationally symmetric.
\end{theo}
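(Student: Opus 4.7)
The plan is to obtain Theorem~\ref{theo:main-thm-ext2} as an immediate corollary of Theorem~\ref{theo:main-thm2-f}, by expressing the scalar curvature $\mathrm{R}_\gamma$ of the ansatz metric $\gamma$ in terms of $u$ and $|Du|$ and then evaluating the pointwise bound $\mathrm{R}_\gamma(y)\leq\max_{\de M}\mathrm{R}_\gamma$ at two distinguished locations: an arbitrary interior image point $y=\pi_\rho^{-1}(x)$, and the added north pole $N$. Combining formula~\eqref{eq:R} with the identity $\tfrac{|\nabla f|_g^2}{1-f^2}=u^{-2(n-1)/(n-2)}|Du/(n-2)|^2$ already recorded at the end of Subsection~2.1 yields
\begin{equation*}
\mathrm{R}_\gamma(y)\,=\,\frac{n(n-1)}{(n-2)^2}\,\frac{|Du|^2(x)}{u(x)^{2(n-1)/(n-2)}}
\qquad\text{for }y=\pi_\rho^{-1}(x),\ x\in\R^n\setminus\overline\Omega.
\end{equation*}
Since $u\equiv 1$ and $\gamma=g_{\R^n}$ on $\de\Omega$, the boundary maximum reduces to $\max_{\de M}\mathrm{R}_\gamma=\tfrac{n(n-1)}{(n-2)^2}(\max_{\de\Omega}|Du|)^2$, while at the north pole $\gamma|_N$ is by construction the round metric of $S_\rho^n$ with $\rho=[\Ca(\Omega)]^{1/(n-2)}$ (compare Remark~\ref{rem:different-radiai}), so $\mathrm{R}_\gamma(N)=n(n-1)/\rho^2=n(n-1)[\Ca(\Omega)]^{-2/(n-2)}$.

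Plugging these expressions into Theorem~\ref{theo:main-thm2-f}, the evaluation at a generic interior point $y=\pi_\rho^{-1}(x)$ reduces, after dividing by $n(n-1)/(n-2)^2$ and taking square roots, precisely to the pointwise estimate $|Du|(x)/u(x)^{(n-1)/(n-2)}\leq\max_{\de\Omega}|Du|$, which is the second inequality in~\eqref{eq:theo-ext-1}. The evaluation at $y=N$ yields $(n-2)^{n-2}[\Ca(\Omega)]^{-1}\leq(\max_{\de\Omega}|Du|)^{n-2}$, i.e.\ the first inequality. For the rigidity part, equality in either inequality places the maximum of $\mathrm{R}_\gamma$ at an interior point of $M$ (respectively $\pi_\rho^{-1}(x)\in M\setminus\de M$ or $N\in M\setminus\de M$), so the rigidity portion of Theorem~\ref{theo:main-thm2-f} identifies $(M,\gamma)$ with a round hemisphere of radius $[\Ca(\Omega)]^{1/(n-2)}$; undoing the conformal change~\eqref{eq:def-g} via the stereographic projection then forces $\Omega$ to be a round ball and $u$ to be rotationally symmetric.

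The main obstacle I anticipate is purely bookkeeping: justifying that $\gamma$ extends to $N$ with the $\C^2$-regularity and spherical scalar curvature stated above, so that both the evaluation $\mathrm{R}_\gamma(N)=n(n-1)[\Ca(\Omega)]^{-2/(n-2)}$ and the strong maximum principle used inside Theorem~\ref{theo:main-thm2-f} are legitimate at $N$. This rests on the sharp asymptotics $u=\Ca(\Omega)|x|^{2-n}+(\text{lower order})$ from Proposition~\ref{pro:est-u} together with the corresponding expansions of $Du$ and $D^2u$; once granted, the remainder of the argument is just the algebraic rearrangement indicated above.
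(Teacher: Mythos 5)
Your proposal is correct and follows essentially the same route as the paper's own proof: evaluate the pointwise maximum principle $\mathrm{R}_\gamma(y)\leq\max_{\de M}\mathrm{R}_\gamma$ from Theorem~\ref{theo:main-thm2-f} at $y=N$ (first inequality) and at a generic interior point $y=\pi_\rho^{-1}(x)$ (second inequality), using the identity $\mathrm{R}_\gamma=n(n-1)\tfrac{|\nabla\phi|^2_\gamma}{1-\phi^2}$ together with~\eqref{eq:|naf|} and the asymptotic value~\eqref{eq:est_f2}, then pass to the rigidity portion of Theorem~\ref{theo:main-thm2-f} for the equality case. The only cosmetic difference is that you substitute $u$ and $|Du|$ into the scalar curvature at the outset, whereas the paper works with $\phi$ and $\nabla\phi$ until the final line; your parenthetical justifications (``$\gamma=g_{\R^n}$ on $\de\Omega$'' and ``$\gamma|_N$ is by construction the round metric'') are strictly speaking only pointwise statements about the metric and do not by themselves control the scalar curvature, but since you immediately back them up with the identity $\mathrm{R}_\gamma=\tfrac{n(n-1)}{(n-2)^2}\,|Du|^2\,u^{-2(n-1)/(n-2)}$ and acknowledge that the value at $N$ rests on the asymptotics of Proposition~\ref{pro:est-u}, these are harmless shorthand rather than gaps.
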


\noindent The detailed proof of inequality~\eqref{eq:main-thm2-f} and its equivalence with formul\ae~\eqref{eq:theo-ext-1} in Theorem~\ref{theo:main-thm-ext2} can be found in
Section~\ref{sec:proof-part2}. 
Again, the equality case is shown to imply that $\nabla^2 f = \lambda g$ for a certain
function $\lambda$, and this fact is used in
Section~\ref{sec:equality} to prove the rigidity statements.

\section{The conformally equivalent formulation}
\label{sec:conformal-form}

We will shortly detail the study of the conformally equivalent
formulation of the problem that we proposed above. Before that, let us
recall a number of notational conventions.

Given $\Omega \subset \R^n$ bounded open set with smooth boundary and
containing the origin, consider the unique solution of system~\eqref{eq:system}, then build the metric $g$ and the function
$f$ as in~\eqref{eq:def-g} and~\eqref{eq:def-f}. In the following we
will use the same conventions as in~\cite{agostiniani-mazzieri-14},
using the symbols $\scal{\cdot}{\cdot}$, $\abs{\cdot}$, $D$, $D^2$ and
$\Delta$ respectively as the scalar product, norm, covariant
derivative, Hessian and Laplacian with respect to the metric
$g_{\R^n}$ and $\scal{\cdot}{\cdot}_g$, $\abs{\cdot}_g$, $\nabla$,
$\nabla^2$ and $\Delta_g$ as the same objects with respect to the
metric $g$. For the metric $g$ we also define $\Gamma_{ij}^k$ and
$\Ric_{ij}$ to be the Christoffel symbols and the Ricci tensor, whose
sign is chosen so that the standard sphere has positive scalar
curvature. The notations $Du$ and $\nabla f$ will also denote the
gradient vector of the corresponding functions, i.e. the composition
of the covariant derivative with the inverse of the metric.

\smallskip

Once the framework is set up, all we have to do is to compute the
relationships between the geometric objects built on $g_{\R^n}$ and
$u$ and those built on $g$ and $f$, which we shall readily do. The
reader wishing to read interesting mathematical content is suggested
to skip to the next section and come back here only to verify formul\ae\ 
as they desire so.

\medskip

\noindent{\bf Conformal change and derivation of system~\eqref{eq:geom-system}.}
From~\eqref{eq:def-f} we have that
\[ u^{-\frac{2}{n-2}} = \frac{1+f}{1-f} . \]
Rewriting $g$ in terms of $f$ therefore gives
\begin{equation}
  \label{eq:g-from-f}
  g = \left( \frac{2}{1+\frac{1+f}{1-f}} \right)^2 g_{\R^n} = {(1-f)}^2 g_{\R^n} ,
\end{equation}
from which it is easy to compute the Christoffel symbols:
\[ \Gamma_{ij}^k = -\frac{1}{1-f} (D_i f \delta_j^k + D_j f \delta_i^k - D_h f g_{\R^n}^{kh} g^{\R^n}_{ij} ) . \]
Formul\ae\ for Hessian and Laplacian of a smooth function $w \colon \R^n
\setminus \overline{\Omega} \to \R$ follow at once:
\begin{align*}
  \nabla^2_{ij} w & = D^2_{ij} w - \Gamma_{ij}^k D_k w \\
  & = D^2_{ij} w + \frac{1}{1-f} \left( D_i f D_j w + D_j f D_i w - \scal{Df}{Dw} g^{\R^n}_{ij}\right) , \\
  \Delta_g w & = g^{ij} \nabla^2_{ij} w \\
  & = \frac{1}{{(1-f)}^2} \Delta w - \frac{n-2}{{(1-f)}^3} \scal{Df}{Dw} .
\end{align*}
Computing the covariant derivatives of $f$ (and recalling that $\Delta
u = 0$) we obtain
\begin{align*}
  Df & = -\frac{4}{n-2} \cdot \frac{u^{-\frac{n}{n-2}}}{{(u^{-\frac{2}{n-2}}+1)}^2} Du \\
  & = -\frac{1}{n-2} {(1+f)}^\frac{n}{2} {(1-f)}^{-\frac{n-4}{2}} Du \\
  & = -\frac{1}{n-2} \left( \frac{1+f}{1-f} \right)^{\frac{n-2}{2}} (1-f^2) Du , \\
  D^2 f & = \left( \frac{n}{2} \cdot \frac{1}{1+f} + \frac{n-4}{2} \cdot \frac{1}{1-f} \right) Df \otimes Df - \frac{1}{n-2} \left( \frac{1+f}{1-f} \right)^{\frac{n-2}{2}} (1-f^2) D^2u , \\
  \Delta f & = \left( \frac{n}{2} \cdot \frac{1}{1+f} + \frac{n-4}{2} \cdot \frac{1}{1-f} \right) \abs{Df}^2 \\
  & = \frac{1}{1-f^2} \cdot (n-2-2f) \abs{Df}^2,
\end{align*}
for the Euclidean case and
\begin{align*}
  \nabla^2_{ij} f & = D^2_{ij} f - \Gamma_{ij}^k D_k f \\
  & = D^2_{ij} f + \frac{1}{1-f} (2 D_i f D_j f - \abs{Df}^2 g_{ij}^{\R^n}) \\
  & = \left( \frac{n}{2} \cdot \frac{1}{1+f} + \frac{n-\cancel{4}}{2} \cdot \frac{1}{1-f} \right) D_i f D_j f - \frac{1}{n-2} \left( \frac{1+f}{1-f} \right)^{\frac{n-2}{2}} (1-f^2) D^2_{ij} u \\
  & \qquad {} + \frac{1}{1-f} (\cancel{2 D_i f D_j f} - \abs{Df}^2 g^{\R^n}_{ij} ) \\
  & = \frac{n}{1-f^2} D_i f D_j f - \frac{1}{1-f} \abs{Df}^2 g^{\R^n}_{ij} - \frac{1}{n-2} \left( \frac{1+f}{1-f} \right)^{\frac{n-2}{2}} (1-f^2) D^2_{ij} u  \\
  & = \frac{4}{(n-2)^2} \, \frac{u^{-2\frac{n-1}{n-2}}}{(u^{-\frac{2}{n-2}}+1)^2}\left[n D_i u D_j u - 2 \frac{u^{-\frac{2}{n-2}}}{u^{-\frac{2}{n-2}}+1}\abs{D u}^2 g_{ij}^{\R^n}- (n-2) u D^2_{ij} u \right],
  \\
  \Delta_g f & = \frac{1}{{(1-f)}^2} \Delta f + \frac{1}{{(1-f)}^3} (2-n) \abs{Df}^2 \\
  & = (-nf) \frac{\abs{Df}^2}{{(1-f)}^2(1-f^2)} \\
  & = (-nf) \frac{\abs{\nabla f}_g^2}{1-f^2}\,,
\end{align*}
for the geometrical case (of course $\nabla f = Df$, as covariant
derivation of functions does not actually depend on the metric; however
$\abs{\nabla f}^2_g = {(1-f)}^{-2} \abs{Df}^2$ and, in particular,
$\abs{\nabla f}_g^2\,g = \abs{Df}^2\, g_{\R^n}$). In Corollary~\ref{cor:est-f} the above relations are used to derive
asymptotic estimates for $f$.  For future convenience, we also write
down the relation between the norm of the gradient of $f$ and $u$
\begin{align}
\frac{\abs{\nabla f}_g^2}{1-f^2}
&=\frac{\abs{D f}^2}{(1-f)^2(1-f^2)} \nonumber
\\
&=\frac{1}{(n-2)^2}\left(\frac{1+f}{1-f}\right)^{n-1}\abs{D u}^2 \nonumber
\\
\label{eq:|naf|}
&=\,u^{-2\left(\frac{n-1}{n-2}\right)}\,\abs*{\frac{D u}{n-2}}^2 ,
\end{align}
from which the following estimate is obtained for $\abs{x}\to\infty$:
\begin{equation}
\label{eq:est_f2}
\frac{\abs{\nabla f}_g^2}{1-f^2}={[\Ca(\Omega)]}^{-\frac{2}{n-2}}+o(1) \,.
\end{equation}
Another equation that we need is the one that relates the Ricci tensor of the metric
$g$ with the hessian of the function $f$. From~\cite[Theorem 1.159]{besse} (who uses the opposite
convention as us for the sign of the Laplacian) we have that the
metric $g = e^{2\phi} g_{\R^n}$ has Ricci tensor equal to
\[ -(n-2) \left(D^2 \phi - D \phi \otimes D \phi\right) + \left[-\Delta \phi - (n-2) \abs{D\phi}^2\right] g_{\R^n} . \]
In our case $\phi = \log(1-f)$, so
\begin{align*}
  \Ric & = \frac{n-2}{1-f} \cdot D^2 f + \frac{2(n-2)}{{(1-f)}^2} \cdot Df \otimes Df + \frac{1}{1-f} \cdot \Delta f g_{\R^n} - \frac{n-3}{{(1-f)}^2} \cdot \abs{Df}^2 g_{\R^n} \\
  & = \frac{n-2}{1-f} \nabla^2 f + \frac{1}{1-f} \Delta f\, g_{\R^n} - \frac{1}{{(1-f)}^2} \abs{Df}^2 g_{\R^n} \\
  & = \frac{n-2}{1-f} \nabla^2 f + \frac{\abs{Df}^2}{{(1-f)}^2} \left( \frac{n-2-2f}{1+f} - 1 \right) g_{\R^n} \\
  & = \frac{n-2}{1-f} \nabla^2 f + \frac{n-1-f}{1-f} \cdot \frac{\abs{\nabla f}_g^2 }{1-f^2}\, g \,.
\end{align*}
In particular, we have recovered system~\eqref{eq:geom-system} stated
in the introduction.

\medskip

\noindent{\bf Relevant geometric quantities on the level sets. }We pass now to analyze the geometry of the level sets of $u$ and $f$. Let us start from the case in which our level set is regular, for either $f$ or $u$ (we have already noticed that the
two notions are equivalent). We define the normal vector fields
$\nu$ and $\nu_g$, second fundamental forms $h$ and $h_g$ and the mean
curvatures $H$ and $H_g$ (already mentioned in the introduction) with
respect to the two metrics. The normal vectors are chosen to be directed outwards, namely, on a level set $\{u=t\}=\{f=s\}$, both $\nu$ and $\nu_g$ point towards the exterior set $\{u \leq t\}=\{f \geq s\}$. The second
fundamental forms are chosen with respect to said normal vectors and
stipulating that the standard sphere immersed in the Euclidean space
has positive mean curvature with respect to the normal pointing away
from the origin. The exact formul\ae\ are (in this case $i$ and $j$ only
run along the tangential coordinates to the level set):
\begin{align*}
\nu & = -\frac{Du}{\abs{Du}} \,, & h_{ij} & = \scal{D_i \nu}{\de_j}\,,  & H & = g_{\R^n}^{ij} h_{ij} \, , \\
\nu_g & = \frac{\nabla f}{\abs{\nabla f}_g}\, , & {(h_g)}_{ij} & = \scal{\nabla_i \nu_g}{\de_j}_g , & H_g & = g^{ij} {(h_g)}_{ij} \, .
\end{align*}
If $\{f = s\}$ is a regular level set and $p$ is one of its points,
then the function $f$ is a local coordinate, which can be completed to
a full set of local coordinates $(f, x^2, \dots, x^n)$ around $p$ such
that at any point of the level set the vector fields $\de_{x^k}$ are
tangent to the level set for $k = 2, \dots, n$ and $\de_f$ is orthogonal to it;
the coordinates can also be taken to be all orthogonal to each other at the single point
$p$. The coordinates $x^2$, \dots, $x^n$ are first chosen as the usual normal coordinates on $\{f=s\}$
and then extended to a small open set around $p$ through the flow of
the vector field $\nabla f$. The following relationship is then valid
at any point of $\{f=s\}$:
\begin{equation}
\label{eq:rel-normal}
\nabla f = \abs{\nabla f}_g \cdot \nu_g = \abs{\nabla f}_g^2 \cdot \de_f ,
\end{equation}
since $\de_f$ and $\nabla f$ are by construction parallel and

\begin{align*}
df(\de_f) & = \de_f(f) = 1 \\
df(\nabla f) & = \scal{\nabla f}{\nabla f}_g = \abs{\nabla f}_g^2 \,.
\end{align*}
With this choice of coordinates, the metric can be locally written as
\[ \abs{\nabla f}_g^{-2} df \otimes df + g_{\alpha\beta}(f,x)
dx^\alpha \otimes dx^\beta , \]
where Greek indices range over tangential coordinates. In particular
\begin{equation}
\label{eq:coarea}
\abs{\nabla f}_g \cdot d v_g = d \sigma_g \cdot df\, .
\end{equation}
where $v_g$ is the measure induced by $g$ on $\R^n\setminus\Omega$ and $\sigma_g$ is the measure induced by $g$ on the level set $\{f=s\}$. For a regular level set $\{f=s\}$ we also have
\begin{align}
\!\!\!{(h_g)}_{ij} & = \scal{\nabla_i \nu_g}{\de_j}_g = \Scal{\nabla_i \frac{\nabla f}{\abs{\nabla f}_g}}{\de_j}_{\!\!g} =  \frac{1}{\abs{\nabla f}_g} \scal{\nabla_i \nabla f}{\de_j}_g + \de_i \frac{1}{\abs{\nabla f}_g} \cancel{\scal{\nabla f}{\de_j}}_g = \frac{\nabla^2_{ij} f}{\abs{\nabla f}_g} , \nonumber \\
H_g & = g^{ij} {(h_g)}_{ij} = \frac{\Delta_g f - g^{ff} \nabla^2_{ff} f}{\abs{\nabla f}_g}  = \frac{\Delta_g f - \nabla^2_{\nu\nu} f}{\abs{\nabla f}_g} = \frac{\Delta_g f}{\abs{\nabla f}_g} - \frac{\nabla^2 f(\nabla f, \nabla f)}{\abs{\nabla f}_g^3} \,, \label{eq:mean-curv}
\end{align}
so
\begin{equation}
\label{eq:nabla2-h}
\nabla^2 f(\nabla f, \nabla f) = \abs{\nabla f}_g^2 \Delta_g f - \abs{\nabla f}_g^3 H_g = -\frac{nf}{1-f^2} \abs{\nabla f}_g^4 - \abs{\nabla f}_g^3 H_g \,.
\end{equation}
The corresponding formula for $g_{\R^n}$, thanks to the harmonicity of
$u$, simplifies to
\[ D^2u(Du,Du) = \abs{Du}^3 H \,  . \]
Let us now pass to the analysis of the nonregular level sets of $u$ and $f$. First of all, since $u$ is harmonic, it follows  from~\cite[Theorem~1.7]{Har_Sim} (see also~\cite{Lin}) that the $(n-1)$-dimensional Hausdorff measure of the level sets of $u$ (thus also of $f$) is locally finite. The properness of $u$ and $f$ then implies that all the level sets of $u,f$ have finite $\mathcal{H}^{n-1}$-measure.
Moreover, from~\cite[Theorem~1.17]{Che_Nab_Val} we know that the Minkowski dimension of the set of the critical points of $u$ is bounded above by $(n-2)$. In particular, also the Hausdorff dimension is bounded by $(n-2)$ (the bound on the Hausdorff dimension was proved before, see for instance~\cite{Har_Hof_Hof_Nad}). This implies that, for all $t\in\ocint{0}{1}$, the normal $\nu$ to the level set $\{u=t\}$ is defined $\mathcal{H}^{n-1}$-almost everywhere on $\{u=t\}$, and, consequently, so are the second fundamental form $h$ and mean curvature $H$. Since the critical points of $f$ are the same as the ones of $u$ (as one deduces immediately from~\eqref{eq:|naf|}), the same is true for the normal $\nu_g$, the second fundamental form $h_g$ and the mean curvature $H_g$ with respect to $g$. In particular, formul\ae~\eqref{eq:coarea},~\eqref{eq:mean-curv} and~\eqref{eq:nabla2-h} hold $\mathcal{H}^{n-1}$-almost everywhere on any level set of $f$.

\section{Pointwise identities via Bochner formula}
\label{sec:estimates}

In this section we will abstract completely from the original
formulation of the problem (which is system~\eqref{eq:system}) and
work only with triples $(\Omega,f,g)$ satisfying system~\eqref{eq:geom-system}, that we recall here:
\begin{equation}
\label{eq:geom-system2}
\def\arraystretch{1.8}
\left\{ \begin{array}{r@{}c@{}ll}
\Delta_g f \,&\,=\, &\,-n \left(\frac{\abs{\nabla f}_g^2}{1-f^2} \right) \, f & \text{in } \R^n \setminus \overline\Omega \, ,
\\
\Ric \,&\, = \,&\, \frac{n-2}{1-f} \,\, \nabla^2 f \,\, + \,\, \frac{n-1-f}{1-f} \, \left(\frac{\abs{\nabla f}_g^2 }{1-f^2}\right) \,g & \text{in } \R^n \setminus \overline\Omega  \, ,
\\
f \,&\, = \,&\, 0 & \text{on } \de \Omega \, ,
\\
f(x) \,&\,\to\,&\, 1 & \text{as } \abs{x} \to +\infty \, .
\end{array} \right.
\end{equation}
We also recall that, in the spherical setting,
we ultimately expect that the function $$
x\,\,\longmapsto\,\,\frac{\abs{\nabla f}_g^2}{1-f^2}(x)
$$
will be relevant in the rigidity statements, as we need to prove it is
constant. Also, we expect the metric $g$ to be
isometric to the north round half-sphere deprived of the north pole, where the
function $f$ plays the role of the $z^{n+1}$ coordinate (consistently,
it has value zero on $\de\Omega$, which will be the equator, and goes
to $1$ at infinity, which will be the north pole).

We will make use of the well-known Bochner formula, that we repeat
here for the convenience of the reader: for any smooth function $f$ on
a Riemannian manifold there holds
\[ \frac{1}{2} \Delta_g \abs{\nabla f}_g^2 = \abs{\nabla^2 f}_g^2 + \Ric(\nabla f, \nabla f) + \scal{\nabla \Delta_g f}{\nabla f}_g . \]
The formula is easy to prove rearranging and commuting derivatives,
but see also~\cite[Proposition~4.15]{gallot}. In our case, using~\eqref{eq:geom-system2} the Bochner formula rewrites as
\[ \frac{1}{2} \Delta_g \abs{\nabla f}_g^2 = \abs{\nabla^2 f}_g^2 + \frac{n-2}{1-f} \nabla^2 f(\nabla f, \nabla f) + \frac{n-1-f}{1-f} \cdot \frac{\abs{\nabla f}_g^4}{1-f^2} + \scal{\nabla \Delta_g f}{\nabla f} . \]
We can exploit this formula to find a monotonic quantity for the
function $f$. Let us consider the vector field $X$ which was defined
in~\eqref{eq:def-x} and which we report here again for the reader's
convenience:
\begin{equation}
  \label{eq:def-x2}
  X = \frac{1}{(1-f){(1+f)}^{n-1}} \left(\nabla \abs{\nabla f}_g^2 - \frac{2}{n} \Delta_g f \nabla f \right) = \frac{1}{{(1+f)}^{n-2}}  \nabla\left( \frac{\abs{\nabla f}_g^2}{1-f^2}\right) .
\end{equation}

\begin{lem}
	\label{le:div_X}
  Let $(\Omega,f,g)$ be a regular solution to problem~\eqref{eq:geom-system2} in the sense of Definition~\ref{ass:fg}, and let $X$ be the vector field defined by~\eqref{eq:def-x2}.  Then $\Div_g X \geq 0$, where
  $\Div_g$ denotes the divergence with respect to the metric $g$ (in
  coordinates: $\Div_g X = \nabla_i X^i$). Moreover, at any point,
  $\Div_g X = 0$ if and only if $\nabla^2 f = \lambda g$ for some real
  number $\lambda$.
\end{lem}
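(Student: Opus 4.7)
The plan is to compute $\Div_g X$ directly and derive the pointwise identity
\[
\Div_g X \;=\; \frac{2}{(1+f)^{n-2}(1-f^2)}\; \abs{\nabla^2 f - \tfrac{\Delta_g f}{n}\, g}_g^2 .
\]
Both conclusions of the lemma then follow at once: nonnegativity is clear, while the right-hand side vanishes precisely when the traceless Hessian does, i.e.\ when $\nabla^2 f = \lambda g$ with $\lambda = \Delta_g f/n$. Conversely, if $\nabla^2 f = \lambda g$ at a point, taking traces forces $\lambda = \Delta_g f/n$, so the two conditions are equivalent.

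The cleanest route is to work with the second form of $X$ in~\eqref{eq:def-x2}. Setting $w := \abs{\nabla f}_g^2 / (1-f^2)$, the product rule yields
\[
\Div_g X \;=\; \frac{1}{(1+f)^{n-2}}\,\Bigl[\,\Delta_g w \;-\; (n-2)\,\frac{\scal{\nabla f}{\nabla w}_g}{1+f}\,\Bigr],
\]
so everything reduces to computing $\Delta_g w$. I would first apply the Bochner identity to $\abs{\nabla f}_g^2$, then use the relation $\abs{\nabla f}_g^2 = (1-f^2) w$ to convert $\Delta_g \abs{\nabla f}_g^2$ into $(1-f^2)\Delta_g w$ plus explicit terms in $w^2$, $\Delta_g f$ and $\scal{\nabla f}{\nabla w}_g$. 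Into the right-hand side of Bochner I would substitute the two equations of~\eqref{eq:geom-system2}: the first replaces $\Delta_g f$ by $-nfw$ (and, upon differentiating in the direction $\nabla f$, yields $\scal{\nabla \Delta_g f}{\nabla f}_g$ as a combination of $w^2$ and $\scal{\nabla f}{\nabla w}_g$); the second replaces $\Ric(\nabla f,\nabla f)$ by a combination of $\nabla^2 f(\nabla f,\nabla f)$ and $\abs{\nabla f}_g^4/(1-f^2)$, where $\nabla^2 f(\nabla f,\nabla f) = \tfrac{1}{2}\scal{\nabla \abs{\nabla f}_g^2}{\nabla f}_g$ is itself expressible in the same variables.

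The main technical step is then the bookkeeping. When all substitutions are made and collected, the result must separate into a curvature block $\abs{\nabla^2 f}_g^2 - (\Delta_g f)^2 / n$, which equals $\abs{\nabla^2 f - (\Delta_g f/n)g}_g^2$ by a standard trace computation using $(\Delta_g f)^2 = n^2 f^2 w^2$, together with a drift block that is a combination of $w^2$ and $\scal{\nabla f}{\nabla w}_g$ with coefficients depending only on $f$ and $n$. The one nontrivial point is to check that, after dividing by $(1-f^2)$, this drift block matches exactly the correction term $(n-2)\scal{\nabla f}{\nabla w}_g/(1+f)$ in the expression for $\Div_g X$ above; concretely, the $w^2$ terms must cancel outright and the coefficient of $\scal{\nabla f}{\nabla w}_g$ must simplify to $(n-2)/(1+f)$. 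This is a single elementary algebraic identity in $f$ and $n$, and it is precisely what dictates the specific powers of $(1\pm f)$ in the weight $(1-f)(1+f)^{n-1}$ built into $X$. Once this cancellation is verified, the target identity above holds and both assertions of the lemma are established.
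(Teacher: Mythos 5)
Your proposal is correct and takes essentially the same approach as the paper: both compute $\Div_g X$ by the product rule and the Bochner formula, substitute the two equations of~\eqref{eq:geom-system2}, and reduce to the identity $\Div_g X = \tfrac{2}{(1-f)(1+f)^{n-1}}\bigl[\,\abs{\nabla^2 f}_g^2 - \tfrac{1}{n}(\Delta_g f)^2\,\bigr]$, which is exactly your $\tfrac{2}{(1+f)^{n-2}(1-f^2)}\abs{\nabla^2 f - \tfrac{\Delta_g f}{n}g}_g^2$. The only difference is that you expand from the second form of $X$ (the ``drift'' form in $w = \abs{\nabla f}_g^2/(1-f^2)$), whereas the paper expands from the first form and then only afterwards records the drift form; the bookkeeping you defer is the same bookkeeping the paper carries out explicitly.
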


\begin{proof}
  As before, this is just a computation:
\begin{align*}
  \Div_g X & = \frac{1}{(1-f){(1+f)}^{n-1}} \left[ \Delta_g \abs{\nabla f}_g^2 - \frac{2}{n} {(\Delta_g f)}^2 - \frac{2}{n} \scal{\nabla \Delta_g f}{\nabla f}_g \right. \\
    & \qquad \left. - \frac{-nf+n-2}{1-f^2} \Scal{\nabla \abs{\nabla f}_g^2 - \frac{2}{n} \Delta_g f \nabla f}{\nabla f}_g \right] \\
  & = \frac{2}{(1-f){(1+f)}^{n-1}} \left[ \abs{\nabla^2 f}_g^2 - \frac{1}{n} {(\Delta_g f)}^2 - \frac{1}{n} \scal{\nabla \Delta_g f}{\nabla f}_g \right. \\
    & \qquad {}+ \frac{n-2}{1-f} \nabla^2 f(\nabla f, \nabla f) + \frac{n-1-f}{1-f} \cdot \frac{\abs{\nabla f}_g^4}{1-f^2} + \scal{\nabla \Delta_g f}{\nabla f}_g \\
    & \qquad \left. {}- \frac{1}{2} \cdot \frac{-nf+n-2}{1-f^2} \scal{\nabla \abs{\nabla f}_g^2}{\nabla f}_g - \frac{1}{n} \cdot \frac{-nf+n-2}{1-f^2} \Delta_g f \abs{\nabla f}_g^2 \right] .
\end{align*}
To make computations easier to follow, we single out:
\[ \scal{\nabla \Delta_g f}{\nabla f}_g = -n \Scal{\nabla \left( f \frac{\abs{\nabla f}^2_g}{1-f^2} \right)}{\nabla f}_{\!\!g} = -n \left[ \frac{1+f^2}{{(1-f^2)}^2} \abs{\nabla f}^4_g + \frac{f}{1-f^2} \scal{\nabla \abs{\nabla f}_g^2}{\nabla f}_g \right] \]
(using again~\eqref{eq:geom-system2}) and:
\begin{equation}
	\label{eq:distr-nabla2}
	\scal{\nabla\abs{\nabla f}_g^2}{\nabla f}_g = g_{ij} \cdot \nabla_i (g_{k\ell} \nabla_k f \nabla_\ell f) \cdot \nabla_j f = 2 g_{ij} g_{k\ell} \nabla^2_{ik} f \cdot \nabla_\ell f \cdot \nabla_j f = 2 \nabla^2 f(\nabla f, \nabla f) .
\end{equation}

So we can conclude:
\begin{align}
  \Div_g X & = \frac{2}{(1-f){(1+f)}^{n-1}} \left[ \abs{\nabla^2 f}_g^2 - \frac{1}{n} {(\Delta_g f)}^2 \right. \nonumber \\
    & \qquad {}- (n-1) \left( \frac{1+f^2}{{(1-f^2)}^2} \abs{\nabla f}^4_g + \frac{2f}{1-f^2} \nabla^2 f(\nabla f, \nabla f) \right) \nonumber \\
    & \qquad {}+ \frac{(n-2)f+n-2}{1-f^2} \nabla^2 f(\nabla f, \nabla f) + \frac{-(2n+1)f^2+(n-2)f-1}{{(1-f^2)}^2} \abs{\nabla f}_g^4 \nonumber \\
    & \qquad \left. - \frac{-nf+n-2}{1-f^2} \nabla^2 f(\nabla f, \nabla f) - \frac{-nf+n-2}{1-f^2} \cdot \frac{f}{1-f^2} \abs{\nabla f}_g^4 \right] \nonumber \\
  & = \frac{2}{(1-f){(1+f)}^{n-1}} \left[ \abs{\nabla^2 f}_g^2 - \frac{1}{n} {(\Delta_g f)}^2 \right] \geq 0 . \label{eq:div-x}
\end{align}
The last inequality follows from the arithmetic-quadratic mean inequality,
since if $\lambda_1$, \dots $\lambda_n$ are the eigenvalues of
$\nabla^2 f$ (i.e., the principal curvatures), then
$\abs{\nabla^2 f}_g^2 = \sum_i \lambda_i^2$ and
${(\Delta_g f)}^2 = \left( \sum_i \lambda_i \right)^2$. In particular,
when equality is fulfilled all $\lambda_i$ are identical and
$\nabla^2 f = \lambda g$.
\end{proof}

\noindent We conclude this section showing that the inequality $\Div_g X$ can be rewritten as a second order elliptic inequality for the scalar curvature ${\rm R}$ of $g$. To this end, recalling the definition of $X$ given in~\eqref{eq:def-x2} we compute
\begin{align*}
\Div_g X\,&=\,\Div_g\left[\frac{1}{{(1+f)}^{n-2}}  \nabla\left( \frac{\abs{\nabla f}_g^2}{1-f^2}\right)\right]
\\
&=\,\frac{1}{{(1+f)}^{n-2}} \left[ \Delta_g\left( \frac{\abs{\nabla f}_g^2}{1-f^2}\right)-(n-2)\Scal{\frac{\nabla f}{1+f}}{\nabla\left( \frac{\abs{\nabla f}_g^2}{1-f^2}\right)}_{\!\!g}\right]
\end{align*}
Since we have already observed in~\eqref{eq:R} that the scalar curvature ${\rm R}$ is a multiple of $\abs{\nabla f}_g^2/(1-f^2)$, the inequality $\Div_g X$ can be rewritten as
\begin{equation}
\label{eq:ellin-R2}
\Delta_g{\rm R}-(n-2)\Scal{\frac{\nabla f}{1+f}}{\nabla{\rm R}}_{\!g}\,\geq\,0\,.
\end{equation}
Therefore, ${\rm R}$ satisfies a second order elliptic inequality, as anticipated. Inequality~\eqref{eq:ellin-R2} is the starting point of the maximum principle argument that will be discussed in Section~\ref{sec:proof-part2}.

\section{Integral identities}
\label{sec:integral_id}

	In this section we will study the properties of the function $\Phi$, which was defined in Subsection~\ref{sub:outline} and which we recall here:
\begin{equation}
\label{eq:Phi}
\Phi(s) = \frac{1}{{(1-s^2)}^\frac{n+2}{2}}\int_{\{f=s\}} \abs{\nabla f}^3_g \d \sigma_g .
\end{equation}
More precisely, we will show that $\Phi$ is differentiable and monotonic for all $s\in\coint{0}{1}$. 
As a byproduct of this analysis, we will prove inequality~\eqref{eq:main-thm-f} and
its equivalence with~\eqref{eq:main-thm}. 

We point out that the proof of the results presented in the Introduction do not depend on the content of this section. However, it is our opinion that the computations here should help the reader to compare the {\em spherical ansatz} with  the {\em cylindrical ansatz}, shading some lights on the advantages and disadvantages of both methods.

\medskip

\noindent{\bf Well-definedness and continuity.}
First of all, let us notice that $\Phi(s)$ is indeed well-defined for all values $s\in[0,1)$. 

\begin{lem}
	\label{le:Phi-wd}
	Let $(\Omega,f,g)$ be a regular solution to problem~\eqref{eq:geom-system2} in the sense of Definition~\ref{ass:fg}. Then the function $\Phi:[0,1)\to\R$ introduced in~\eqref{eq:Phi} is well-defined and it holds $\limsup_{s\to 1^-}\Phi(s)<+\infty$.
\end{lem}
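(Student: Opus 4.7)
The plan is to handle separately the finiteness of the integral defining $\Phi(s)$ for an arbitrary $s\in[0,1)$ and the asymptotic control as $s\to 1^-$, each of which relies on different ingredients already available in Section~\ref{sec:conformal-form} and in Appendix~\ref{ap:asympt}.

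\emph{Well-definedness.} First, I would note that $f$ is proper on $\R^n\setminus\Omega$: it vanishes on $\partial\Omega$ and tends to $1$ at infinity, so for every $s\in[0,1)$ the level set $\{f=s\}$ is a compact subset of $\R^n\setminus\Omega$. Since $|\nabla f|_g$ is smooth (in fact analytic outside $\partial\Omega$), the integrand $|\nabla f|_g^3$ is bounded on this compact set. The only remaining issue is that $\{f=s\}$ may fail to be a smooth hypersurface when $s$ is a critical value of $f$; however, as recalled at the end of Section~\ref{sec:conformal-form}, the critical set of $f$ (which coincides with that of $u$ because of~\eqref{eq:|naf|}) has Hausdorff dimension at most $n-2$, and $\{f=s\}$ has finite $\mathcal{H}^{n-1}$-measure. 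Combining the boundedness of the integrand with the finite measure of the domain of integration, the integral in~\eqref{eq:Phi} is finite, and $\Phi(s)\in\R$ is well-defined.

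\emph{Finiteness of the limsup.} For the second part I would translate the integral back into the original Euclidean variables. Using the conformal change formulas from Section~\ref{sec:conformal-form}, namely $d\sigma_g=(1-f)^{n-1}d\sigma$ combined with~\eqref{eq:|naf|}, together with the explicit relation $(1-s)/(1+s)=t^{2/(n-2)}$ between $s$ and the corresponding value $t$ of $u$ on $\{f=s\}=\{u=t\}$, a routine algebraic simplification gives
\[
 \Phi(s)\,=\,\frac{1}{(n-2)^{3}\,[\Ca(\Omega)]^{2(n-1)/(n-2)}}\,U_{3}(t),
\]
where $U_{3}(t)$ is the function defined in~\eqref{eq:U_p}. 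As $s\to 1^-$ one has $t\to 0^+$, and the limit~\eqref{eq:lim_up}, which follows from the sharp asymptotic expansions of $u$ and $Du$ at infinity (Proposition~\ref{pro:est-u}), yields $\lim_{s\to 1^-}\Phi(s)=|S^{n-1}|\,[\Ca(\Omega)]^{(n-4)/(n-2)}$. In particular, $\limsup_{s\to 1^-}\Phi(s)<+\infty$.

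\emph{Main obstacle.} The only genuinely delicate point is the finiteness of $\int_{\{f=s\}}|\nabla f|_g^3\,d\sigma_g$ at critical values of $f$: this is where one must invoke the finite $(n-1)$-dimensional Hausdorff measure of level sets of harmonic (equivalently analytic) functions, rather than relying on the smooth coarea formula. Once this is in place, everything else reduces to bookkeeping of the conformal change and to the asymptotic estimates already developed earlier in the paper.
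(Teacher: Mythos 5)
Your proof is correct. The well-definedness argument is essentially the same as the paper's: properness of $f$ gives compactness of level sets, smoothness of $\abs{\nabla f}_g$ gives boundedness of the integrand, and finiteness of $\mathcal H^{n-1}(\{f=s\})$ (from analyticity, via Hardt--Simon/Lin) closes the gap at critical values.

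For the $\limsup$, however, you take a genuinely different route. The paper stays entirely in the conformal $(f,g)$ picture: using Corollaries~\ref{cor:est-f} and~\ref{cor:asympt} it bounds $\abs{\nabla f}_g^3$ crudely by a power of $\abs{x}$ on $\{f=s\}$, then invokes the area estimate~\eqref{eq:est-levels-f} to control the level-set area, yielding only a bound on $\Phi(s)$ near $s=1$ rather than its limit. You instead translate $\Phi$ back to the Euclidean picture, identify $\Phi(s) = (n-2)^{-3}[\Ca(\Omega)]^{-2(n-1)/(n-2)} U_3(t)$ (the algebra here checks out: $(1-s)/(1+s) = t^{2/(n-2)}$, $d\sigma_g = (1-f)^{n-1}d\sigma$, and~\eqref{eq:|naf|} combine exactly as you say), and invoke the stated limit~\eqref{eq:lim_up} to get the precise value $\lim_{s\to 1^-}\Phi(s) = \abs{S^{n-1}}[\Ca(\Omega)]^{(n-4)/(n-2)}$. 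This is cleaner and gives a sharper conclusion (an actual limit, which the paper itself uses later when claiming the left-hand side of~\eqref{eq:main-thm-f} tends to zero as $s\to 1$), at the cost of leaning on~\eqref{eq:lim_up}, which the paper states but does not reprove; the paper's own argument for this lemma is designed to be self-contained in the conformal framework, using only the asymptotics of Proposition~\ref{pro:est-u} as repackaged in Corollaries~\ref{cor:est-f} and~\ref{cor:asympt}.
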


\begin{proof}
We have already observed in Section~\ref{sec:conformal-form} that, from the analyticity and properness of $f$, it follows that the level sets of $f$ have finite $(n-1)$-dimensional Hausdorff measure. Moreover, from Corollary~\ref{cor:asympt}, it follows that $|\nabla f|_g\to 0$ as $|x|\to\infty$. In particular, since $|\nabla f|_g$ is smooth, it is uniformly bounded on $\R^n\setminus\Omega$. This proves that $\Phi(s)$ is well-defined. 

In order to prove that it is bounded as $s\to 1^-$, we use Corollaries~\ref{cor:est-f} and~\ref{cor:asympt} to write
$$
\Phi(s)\,<\,C_1\int_{\{f=s\}} \!\abs{x}^{n-1}\d\sigma_g\,<\,C_2\int_{\{f=s\}} \!(1-s)^{-\frac{n-1}{2}}\d\sigma_g
$$
for $0<s<1$ sufficiently close to $1$ and for some constants $0<C_1<C_2$. The bound on $\Phi(s)$ as $s\to 1^-$ now follows from estimate~\eqref{eq:est-levels-f}.
\end{proof}

\noindent We pass now to discuss the continuity of $\Phi$.

\begin{lem}
	\label{le:Phi-cont}
Let $(\Omega,f,g)$ be a regular solution to problem~\eqref{eq:geom-system2} in the sense of Definition~\ref{ass:fg}. Then the function $\Phi:[0,1)\to \R$ defined by~\eqref{eq:Phi} is continuous and, for all $s\in\coint{0}{1}$, it holds
\begin{equation}
\label{eq:Phi-id1}
(1-s^2)\Phi(s)\,=\,-\int_{\{f>s\}}\frac{\nabla^2 f(\nabla f,\nabla f)}{2(1-f^2)^{\frac{n}{2}}}\d v_g\,.
\end{equation}
\end{lem}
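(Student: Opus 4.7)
Since Lemma~\ref{le:Phi-wd} already establishes the well-definedness of $\Phi$, we focus on the integral identity~\eqref{eq:Phi-id1}; the continuity of $\Phi$ will be an immediate corollary of it.

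The strategy is to obtain~\eqref{eq:Phi-id1} from the divergence theorem applied to the vector field
\[
V \;:=\; \frac{\abs{\nabla f}_g^{2}\,\nabla f}{(1-f^{2})^{n/2}}
\]
(up to a numerical constant) on the region $\{f>s\}\cap\{\abs{x}<r\}$, for $s\in\coint{0}{1}$ a regular value of $f$. Using~\eqref{eq:distr-nabla2} together with the first equation of system~\eqref{eq:geom-system2}, namely $\Delta_g f=-nf\,\abs{\nabla f}_g^{2}/(1-f^{2})$, a direct computation reveals that the $\abs{\nabla f}_g^{4}$ contributions produced by $\Delta_g f$ and by differentiating the conformal weight $(1-f^{2})^{-n/2}$ cancel exactly, yielding
\[
\Div_g V \;=\; \mathrm{const}\cdot\frac{\nabla^{2} f(\nabla f,\nabla f)}{(1-f^{2})^{n/2}}.
\]
Since $\nu_g=\nabla f/\abs{\nabla f}_g$ on $\{f=s\}$, the inner boundary contribution is proportional to $(1-s^{2})\Phi(s)$, so it only remains to verify that the outer flux across $\{\abs{x}=r\}$ vanishes as $r\to\infty$.

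This vanishing-at-infinity statement is the main obstacle. We intend to handle it by converting the asymptotic expansions of $u$ and $\abs{Du}$ (Proposition~\ref{pro:est-u}) into decay rates for $f$, $1-f$ and $\abs{\nabla f}_g$ through the relations~\eqref{eq:|naf|}--\eqref{eq:est_f2}. Combined with the conformal factor $(1-f)^{n-1}$ that enters $\d\sigma_g$, these estimates show that the flux integrand on $\{\abs{x}=r\}$ decays like $\Oh(\abs{x}^{-2})$ which, compared with the $r^{n-1}$ Euclidean growth of the sphere, is more than enough.

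For critical values of $s$ the level set $\{f=s\}$ may fail to be smooth. We circumvent this via Sard's theorem: the set of critical values of $f$ has zero Lebesgue measure in $\coint{0}{1}$, so~\eqref{eq:Phi-id1} is first established at regular $s$ and then extended to all $s\in\coint{0}{1}$ by density. The right-hand side is continuous in $s$ by the dominated convergence theorem, once the same asymptotic estimates are used to show that $\abs{\nabla^{2} f(\nabla f,\nabla f)}/(1-f^{2})^{n/2}$ is integrable on $\{f>0\}$ with respect to $\d v_g$ (the computation above gives $\Oh(\abs{x}^{-n-2})\d v_{\R^n}$ at infinity, which is integrable). The continuity of $\Phi$ on $\coint{0}{1}$ then follows immediately from~\eqref{eq:Phi-id1}, since $1-s^{2}>0$.
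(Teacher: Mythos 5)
Your overall plan follows the paper's quite closely: same vector field, same divergence computation exploiting the first equation of~\eqref{eq:geom-system2}, an application of the divergence theorem with an outer boundary sent to infinity, and continuity deduced from the $L^1$ integrability of the integrand. Two points, one minor, one substantive.

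\emph{Minor.} Your flux estimate is numerically off, though the conclusion is right. On the Euclidean sphere $\{\abs{x}=r\}$ the $g$-flux density per unit Euclidean area is $\langle V,\nu_g\rangle_g\,(1-f)^{n-1}$, and using $\abs{\nabla f}_g\sim 2\abs{x}^{-1}$, $1-f^2\sim c\,\abs{x}^{-2}$, $(1-f)^{n-1}\sim c\,\abs{x}^{-2(n-1)}$ one finds it decays like $O(\abs{x}^{-n-1})$, not $O(\abs{x}^{-2})$; the total flux across $\{\abs{x}=r\}$ is then $O(r^{-2})\to 0$. As written, $O(\abs{x}^{-2})$ against $r^{n-1}$ of area gives $O(r^{n-3})$, which does not vanish for $n\geq 3$. (The paper uses the outer boundary $\{f=S\}$ and simply invokes the boundedness of $\Phi$ near $1$ from Lemma~\ref{le:Phi-wd} to kill the outer term; this sidesteps the flux computation altogether.)

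\emph{Substantive.} Your treatment of critical values is circular. You propose to establish~\eqref{eq:Phi-id1} at regular $s$ via the divergence theorem and then pass to critical $s$ ``by density,'' justifying the passage by continuity of both sides. But the left-hand side is $(1-s^2)\Phi(s)$, and continuity of $\Phi$ at a critical value is precisely what you are trying to prove; a priori, $\Phi$ --- defined by an integral over a possibly singular level set --- could jump there, and Sard's theorem alone does not rule this out. The paper closes this gap with a direct argument: it reruns the divergence theorem on $\{s<f<S\}\setminus U_\epsilon$, where $U_\epsilon$ is an $\epsilon$-neighborhood of the critical set of $f$, and sends $\epsilon\to 0$. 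That limit works because the critical set has Minkowski dimension at most $n-2$ (cited from~\cite{Che_Nab_Val}), so the extra boundary contributions vanish. Without this device (or an equivalent one), the identity~\eqref{eq:Phi-id1} at critical $s$, and hence the continuity of $\Phi$, is not established.
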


\begin{proof}
We start with the following computation:
\begin{align}
\notag
\Div_g\left[\frac{|\nabla f|_g^2}{(1-f^2)^{\frac{n}{2}}}\cdot\nabla f\right]\,&=\,
\frac{\Scal{\nabla\abs{\nabla f}_g^2}{\nabla f}_g}{(1-f^2)^{\frac{n}{2}}}
+
\cancel{\frac{|\nabla f|_g^2}{(1-f^2)^{\frac{n}{2}}}\cdot\Delta_g f}
+
\cancel{\frac{nf|\nabla f|_g^4}{(1-f^2)^{\frac{n+2}{2}}}}
\\
\label{eq:div_naf}
&=\,\frac{\nabla^2 f(\nabla f,\nabla f)}{2(1-f^2)^{\frac{n}{2}}}\,,
\end{align}
where the cancellation is a consequence of the second formula in system~\eqref{eq:geom-system2}. 
Now let $0\leq s<1$ be a regular level set of $f$ and let $s<S<1$. It follows from Corollary~\ref{cor:est-f} that if we choose $S$ large enough, then necessarily the level set $\{f=S\}$ is regular.
Integrating formula~\eqref{eq:div_naf} in $\{s<f< S\}$, and applying the divergence theorem, we find
\begin{align}
\notag
\int_{\{s< f<S\}}\frac{\nabla^2 f(\nabla f,\nabla f)}{2(1-f^2)^{\frac{n}{2}}}\d v_g\,&=\,\int_{\{f=S\}}\Scal{\frac{\abs{\nabla f}_g^2}{(1-f^2)^{\frac{n}{2}}}\cdot\nabla f}{\nu_g}_{\!g}\d \sigma_g-\int_{\{f=s\}}\Scal{\frac{\abs{\nabla f}_g^2}{(1-f^2)^{\frac{n}{2}}}\cdot\nabla f}{\nu_g}_{\!g}\d\sigma_g
\\
\notag
&=\,\int_{\{f=S\}}\frac{\abs{\nabla f}_g^3}{(1-f^2)^{\frac{n}{2}}}\d \sigma_g-\int_{\{f=s\}}\frac{\abs{\nabla f}_g^3}{(1-f^2)^{\frac{n}{2}}}\d\sigma_g
\\
\label{eq:int-id1}
&=\,(1-S^2)\Phi(S)-(1-s^2)\Phi(s)\,.
\end{align}
Moreover, from Lemma~\ref{le:Phi-wd}, we know that $\Phi$ is bounded for values sufficiently close to $1$, thus, taking the limit of~\eqref{eq:int-id1} as $S\to 1^-$, we obtain~\eqref{eq:Phi-id1}.

In the case where the level set $\{f=s\}$ is critical, the argument for the proof of~\eqref{eq:Phi-id1} becomes slightly more technical: for every $\epsilon>0$, one lets 
$$
U_\epsilon=B_\epsilon(\text{Crit}(f))=\bigcup_{x\in\text{Crit}(f)}B_\epsilon(x)
$$ 
be the $\epsilon$-neighborhood of the critical points, integrates~\eqref{eq:div_naf} in $\{s<f<S\}\setminus U_\epsilon$, and then applies the divergence theorem. Taking the limit as $\epsilon\to 0$, since the set of the critical points of $f$ has Minkowski dimension bounded above by $(n-2)$ (as observed in Section~\ref{sec:conformal-form}), the set $U_\epsilon$ shrinks fast enough and one recovers~\eqref{eq:int-id1}. For a more careful explanation of the technical details, we address the interested reader to the proof of~\cite[Proposition~4.1]{agostiniani-mazzieri}, which uses the same technique to deduce a similar integral identity.

To conclude the proof, it is enough to show that the right hand side of~\eqref{eq:Phi-id1} is continuous.  To this end, we first show that the integrand in~\eqref{eq:Phi-id1} stays in $L^1(\R^n\setminus\Omega)$. Using Corollary~\ref{cor:est-f}, we compute
\begin{align*}
\int_{\{f>s\}}\abs*{\frac{\nabla^2 f(\nabla f,\nabla f)}{2(1-f^2)^{\frac{n}{2}}}}\d v_g\,&<\,
C_1\int_{\{f>s\}}\frac{\abs{x}^{-2}}{\abs{x}^{-n}}\cdot\abs{x}^{-2n}\d v
\\
&=\,C_1\int_{\{f>s\}}\abs{x}^{-n-2}\abs{x}^{n-1}\d\sigma_{g_{S^{n-1}}} d \abs{x}
\\
&<\,C_2\int_{\{f>s\}}\abs{x}^{-3}\d \abs{x},
\end{align*}
for some constants $0<C_1<C_2$. Using again the estimates in Corollary~\ref{cor:est-f}, we see that the domain of integration $\{f>s\}=\{1-f<1-s\}$ is contained in $\{\frac{1}{C_3}\abs{x}^{-2}<1-s\}=\{\abs{x}>[C_3(1-s)]^{-\frac{1}{2}}\}$ for some other constant $C_3$. Hence, we get
\begin{equation}
\label{eq:bound-int}
\int_{\{f>s\}}\abs*{\frac{\nabla^2 f(\nabla f,\nabla f)}{2(1-f^2)^{\frac{n}{2}}}}\d v_g\,<\,C_2\int_{[C_3(1-s)]^{-\frac{1}{2}}}^{+\infty}\abs{x}^{-3}\d \abs{x}\,<\,C_4(1-s)\,<\,C_4\,,
\end{equation}
where $C_4$ is, yet again, another constant, possibly bigger than $C_1,C_2,C_3$. This proves that the integrand in~\eqref{eq:Phi-id1} stays in $L^1$.

Now let $\phi^+,\phi^-$ be the positive and negative part of the integrand in~\eqref{eq:Phi-id1}, so that
$$
\frac{\nabla^2 f(\nabla f,\nabla f)}{2(1-f^2)^{\frac{n}{2}}}\,=\,\phi^+-\phi^-\,,
$$
and consider the two measures $\mu^+,\mu^-$ on $\R^n\setminus\Omega$, defined as
$$
\mu^+(E)\,=\,\int_{E}\phi^+\d v_g\,,\qquad \mu^-(E)\,=\,\int_{E}\phi^-\d v_g\,,
$$
for every $v_g$-measurable set $E\subseteq\R^n\setminus\Omega$. Clearly $\mu^+,\mu^-$ are finite (because of the bound~\eqref{eq:bound-int}) positive measures, and they are absolutely continuous with respect to $v_g$. In particular, since the Hausdorff dimension of the level sets of $f$ is bounded by $(n-1)$ (as already noticed in Section~\ref{sec:conformal-form}), we have that the $v_g$-measure of the level sets of $f$ is zero, hence also $\mu^+(\{f=s\})=\mu^-(\{f=s\})=0$ for all $s\in[0,1)$. It follows then from~\cite[Proposition~2.6]{Amb_Dap_Men} that the functions $s\mapsto\mu^+(\{f>s\}), s\mapsto\mu^-(\{f>s\})$ are continuous for all $s\in\coint{0}{1}$. This implies that
$$
\Phi(s)\,=\,-\frac{\mu^+(\{f>s\})-\mu^-(\{f>s\})}{1-s^2}\,.
$$
is continuous, as wished.
\end{proof}

\medskip

\noindent{\bf Differentiability and monotonicity.}
Before addressing the issue of the differentiability of $\Phi$, it is useful to prove the following intermediate result.

\begin{lem}
	\label{le:Psi}
	Let $(\Omega,f,g)$ be a regular solution to problem~\eqref{eq:geom-system2} in the sense of Definition~\ref{ass:fg}, and let $X$ be the vector field defined by~\eqref{eq:def-x2}.
	Then, for every $s \in \coint{0}{1}$, it holds
	\begin{align*} 
	\int_{\{f>s\}}\Div_g X\d v_g
	&=
	-\int_{\{f=s\}} \scal{X}{\nu_g}_g \d \sigma_g 
	\\
	&= \frac{2}{(1-s){(1+s)}^{n-1}} \left[ \int_{\{f=s\}} \abs{\nabla f}_g^2 H_g \d \sigma_g + (n-1) \frac{s}{1-s^2} \int_{\{f=s\}} \abs{\nabla f}_g^3 \d \sigma_g \right] . 
	\end{align*}
\end{lem}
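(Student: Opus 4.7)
The plan is to apply the divergence theorem to $X$ on the truncated region $\{s<f<S\}$ for some regular value $S$ with $s<S<1$, and then send $S\to 1^-$. Assuming $s$ is also regular, Stokes' theorem yields
\[
\int_{\{s<f<S\}}\Div_g X \d v_g \,=\, \int_{\{f=S\}}\scal{X}{\nu_g}_g\d\sigma_g \,-\, \int_{\{f=s\}}\scal{X}{\nu_g}_g\d\sigma_g,
\]
where the minus sign on the inner boundary reflects that $\nu_g$ is chosen by convention to point toward $\{f\geq s\}$, i.e.\ opposite to the outward conormal of $\{s<f<S\}$ along $\{f=s\}$.

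For the pointwise expression on a regular level set, I would substitute $\nu_g=\nabla f/\abs{\nabla f}_g$ into~\eqref{eq:def-x2} to get, on $\{f=s\}$,
\[
\scal{X}{\nu_g}_g \,=\, \frac{1}{(1-s)(1+s)^{n-1}\abs{\nabla f}_g}\Bigl(\scal{\nabla\abs{\nabla f}_g^2}{\nabla f}_g - \tfrac{2}{n}\Delta_g f\,\abs{\nabla f}_g^2\Bigr).
\]
Three substitutions finish the computation: identity~\eqref{eq:distr-nabla2} replaces $\scal{\nabla\abs{\nabla f}_g^2}{\nabla f}_g$ by $2\nabla^2 f(\nabla f,\nabla f)$; relation~\eqref{eq:nabla2-h} then rewrites the latter as $-\frac{2ns}{1-s^2}\abs{\nabla f}_g^4 - 2\abs{\nabla f}_g^3 H_g$ on $\{f=s\}$; and the first equation of~\eqref{eq:geom-system2} replaces $\Delta_g f$ by $-ns\abs{\nabla f}_g^2/(1-s^2)$. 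The two contributions proportional to $ns/(1-s^2)$ combine to $-2(n-1)s/(1-s^2)\cdot\abs{\nabla f}_g^4$, so that
\[
\scal{X}{\nu_g}_g \,=\, -\frac{2}{(1-s)(1+s)^{n-1}}\left(\abs{\nabla f}_g^2 H_g + \frac{(n-1)s}{1-s^2}\abs{\nabla f}_g^3\right),
\]
whose negative integral over $\{f=s\}$ is exactly the right-hand side stated in the lemma.

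The main obstacle is showing that $\int_{\{f=S\}}\scal{X}{\nu_g}_g\d\sigma_g\to 0$ as $S\to 1^-$, which is what allows passage to the limit in the Stokes identity. Using the same pointwise formula on $\{f=S\}$, the two pieces to control are $(1-S)^{-2}\int_{\{f=S\}}\abs{\nabla f}_g^3\d\sigma_g$ and $(1-S)^{-1}\int_{\{f=S\}}\abs{\nabla f}_g^2 H_g\d\sigma_g$. The first is proportional to $(1-S)^{-2}(1-S^2)^{(n+2)/2}\Phi(S)$, hence $O((1-S)^{(n-2)/2})$ thanks to the boundedness of $\Phi$ near $1$ (Lemma~\ref{le:Phi-wd}); the second is handled analogously, either by invoking~\eqref{eq:nabla2-h} to re-express $H_g$ in terms of pointwise derivatives of $f$ and exploiting the asymptotic estimates in Corollary~\ref{cor:est-f} and Corollary~\ref{cor:asympt}, or by a direct asymptotic expansion along the lines of the proof of Lemma~\ref{le:Phi-cont}. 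Since $n\geq 3$, both pieces vanish as $S\to 1^-$. Finally, if $s$ happens to be a critical value of $f$, I would adopt the same $\epsilon$-neighborhood trick used in the proof of Lemma~\ref{le:Phi-cont}: integrate $\Div_g X$ over $\{s<f<S\}\setminus U_\epsilon$, apply Stokes, and let $\epsilon\to 0$, using that $\mathrm{Crit}(f)$ has Minkowski dimension at most $n-2$ to make the spurious contribution on $\partial U_\epsilon$ vanish; the resulting boundary integral is still well posed because~\eqref{eq:mean-curv} and~\eqref{eq:nabla2-h} hold $\mathcal{H}^{n-1}$-a.e.\ on $\{f=s\}$.
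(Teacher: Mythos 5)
Your proof is correct and follows essentially the same route as the paper's: compute $\scal{X}{\nu_g}_g$ on a regular level set via~\eqref{eq:def-x2},~\eqref{eq:distr-nabla2},~\eqref{eq:nabla2-h} and the first equation of~\eqref{eq:geom-system2}, then apply the divergence theorem on $\{s<f<S\}$ and let $S\to 1^-$, falling back on the $\epsilon$-neighborhood trick of Lemma~\ref{le:Phi-cont} when $s$ is critical. The only minor variation is in killing the outer boundary term: the paper bounds it in one stroke via Cauchy--Schwarz, $\bigl|\int_{\{f=S\}}\scal{X}{\nu_g}_g\,d\sigma_g\bigr|\leq\int_{\{f=S\}}\abs{X}_g\,d\sigma_g$, and then invokes the asymptotics $\abs{X}_g=\abs{x}\cdot o(1)$ from Corollary~\ref{cor:asympt} together with~\eqref{eq:est-levels-f}, whereas you split the integrand by the same pointwise identity and control the $\abs{\nabla f}_g^3$ piece through the boundedness of $\Phi$ (Lemma~\ref{le:Phi-wd}) and the $\abs{\nabla f}_g^2 H_g$ piece through asymptotics; both yield the same $O\bigl((1-S)^{(n-2)/2}\bigr)$ decay. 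You are also more careful than the paper's writeup about the sign in the Stokes identity, which matches the lemma's statement.
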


\begin{proof}
	The second equality is just a computation: using~\eqref{eq:def-x2},~\eqref{eq:distr-nabla2},~\eqref{eq:mean-curv}
	and~\eqref{eq:geom-system2} we have
	\begin{align*}
	-\int_{\{f=s\}} \scal{X}{\nu_g}_g \d \sigma_g & = -\int_{\{f=s\}} \frac{1}{\abs{\nabla f}_g} \scal{X}{\nabla f}_g \d \sigma_g \\
	& = -\frac{1}{(1-s){(1+s)}^{n-1}} \int_{\{f=s\}} \left( 2 \frac{\nabla^2 f(\nabla f, \nabla f)}{\abs{\nabla f}_g} - \frac{2}{n} \Delta_g f \abs{\nabla f}_g \right) \d \sigma_g \\
	& = -\frac{2}{(1-s){(1+s)}^{n-1}} \left[ \int_{\{f=s\}} -\abs{\nabla f}^2_g H_g \d \sigma_g + \left(\frac{n-1}{n}\right) \int_{\{f=s\}} \Delta_g f \abs{\nabla f}_g \d \sigma_g \right] \\
	& = \frac{2}{(1-s){(1+s)}^{n-1}} \left[ \int_{\{f=s\}} \abs{\nabla f}^2_g H_g \d \sigma_g + (n-1) \frac{s}{1-s^2} \int_{\{f=s\}} \abs{\nabla f}^3_g \d \sigma_g \right] . 
	\end{align*}
	It remains to prove the identity
	\begin{equation}
	\label{eq:intid-lemma}
	\int_{\{f>s\}}\Div_g X\d v_g
	=
	\int_{\{f=s\}} \scal{X}{\nu_g}_g \d \sigma_g\,. 
	\end{equation}
	Again, as in the proof of Lemma~\ref{le:Phi-cont}, let us start from the case where $s>0$ is a regular value, and let $s<S<1$ be a value big enough so that the level set $\{f=S\}$ is regular. 
	As an application of the divergence theorem, it holds
	\begin{equation}
	\label{eq:intid-div}
	\int_{\{s<f<S\}}\Div_g X\d v_g
	=
	\int_{\{f=S\}} \scal{X}{\nu_g}_g \d \sigma_g-\int_{\{f=s\}} \scal{X}{\nu_g}_g \d \sigma_g\,.
	\end{equation} 
	From Cauchy-Schwarz, we have 
	$$
	\int_{\{f=S\}} \scal{X}{\nu_g}_g \d \sigma_g\,\leq\,\int_{\{f=S\}} \abs{X}_g \d \sigma_g\,,
	$$
	and using Corollary~\ref{cor:asympt} and estimate~\eqref{eq:est-levels-f}, we conclude
	$$
	\lim_{S\to 1^-}\int_{\{f=S\}} \scal{X}{\nu_g}_g \d \sigma_g\,=\,0\,.
	$$
	Therefore, taking the limit of identity~\eqref{eq:intid-div}, we obtain~\eqref{eq:intid-lemma} for all the regular values $0<s<1$.
	
If $s$ is a critical value, one proceeds as in the proof of Lemma~\ref{le:Phi-cont}, that is, one integrates $\Div_g X$ on $\{s<f<S\}\setminus U_\epsilon$, where $U_\epsilon$ is the $\epsilon$-neighborhood of the critical points, and then takes the limit as $\epsilon\to 0$ to recover~\eqref{eq:intid-lemma} (again, we refer to~\cite{agostiniani-mazzieri} for the technical details).
\end{proof} 

\noindent We are finally ready to prove the main result of this section.

\begin{pro}
	\label{pro:almost-main-f}
Let $(\Omega,f,g)$ be a regular solution to problem~\eqref{eq:geom-system2} in the sense of Definition~\ref{ass:fg}. Then the function $\Phi:[0,1)\to \R$ defined by~\eqref{eq:Phi} is differentiable and, for all $s\in\coint{0}{1}$, it holds
\begin{equation}
\label{eq:almost-main-f}
{(1-s^2)}^\frac{n+2}{2} \Phi'(s)= -2 \left[\int_{\{f=s\}} \abs{\nabla f}_g^2 H_g \d \sigma_g +(n-1)\frac{s}{1-s^2} \int_{\{f=s\}} \abs{\nabla f}_g^3 \d \sigma_g \right] \leq 0 \, .
\end{equation}
In fact, the left hand side is increasing in $s$ and it tends to zero as $s \to 1$. Moreover, if $\Phi'(s)=0$ for some $s\in\coint{0}{1}$, then it holds $\nabla^2 f= \lambda g$ on the whole $\R^n\setminus\Omega$, for some $\lambda \in \C^\infty(\R^n \setminus \Omega)$.
\end{pro}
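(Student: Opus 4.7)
The plan is to bypass subtle function-theoretic details by reducing everything to an explicit computation of the derivative of the surface integral $I(s) := \int_{\{f=s\}} |\nabla f|_g^3 \, d\sigma_g$, since $\Phi(s) = (1-s^2)^{-(n+2)/2} I(s)$. To compute $I'(s)$ at a regular value of $f$, I will apply the divergence theorem to the vector field $V = |\nabla f|_g^2 \nabla f$ on the slab $\{s < f < S\}$: the boundary contributions give $I(S) - I(s)$ (because $\langle V,\nu_g\rangle = |\nabla f|_g^3$ on level sets), while the volume integral is evaluated via the coarea formula. Using~\eqref{eq:distr-nabla2} to expand $\Div_g V = 2\nabla^2 f(\nabla f,\nabla f) + |\nabla f|_g^2 \Delta_g f$, together with~\eqref{eq:nabla2-h} and the first equation of~\eqref{eq:geom-system2}, the restriction of $\Div_g V/|\nabla f|_g$ to $\{f=s\}$ simplifies to $-\tfrac{3ns}{1-s^2}|\nabla f|_g^3 - 2|\nabla f|_g^2 H_g$, yielding
\[ I'(s) = -\frac{3ns}{1-s^2}\,I(s) \,-\, 2\int_{\{f=s\}}|\nabla f|_g^2 H_g \, d\sigma_g. \]
The product rule $(1-s^2)^{(n+2)/2}\Phi'(s) = I'(s) + \tfrac{(n+2)s}{1-s^2}I(s)$ then collapses to precisely~\eqref{eq:almost-main-f} after combining the two $s\,I(s)$ terms.

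To establish the sign and the limit, I will invoke Lemma~\ref{le:Psi}, which identifies the bracket on the right-hand side of~\eqref{eq:almost-main-f} with $\tfrac{(1-s)(1+s)^{n-1}}{2}\int_{\{f>s\}}\Div_g X\,dv_g$. Thus $(1-s^2)^{(n+2)/2}\Phi'(s) = -(1-s)(1+s)^{n-1}\int_{\{f>s\}}\Div_g X\,dv_g$, which is nonpositive by Lemma~\ref{le:div_X}. As $s\to 1^-$, both the prefactor $(1-s)(1+s)^{n-1}$ and the integral tend to zero: the former by inspection, the latter by monotone convergence, since $\Div_g X\ge 0$ is integrable on $\R^n\setminus\Omega$ (take $s=0$ in Lemma~\ref{le:Psi}) and the domains $\{f>s\}$ shrink to emptyness, as guaranteed by the asymptotic estimates of Corollary~\ref{cor:est-f}. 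The monotonicity of the left-hand side in $s$ will follow from a careful analysis of this representation, exploiting that $s \mapsto \int_{\{f>s\}}\Div_g X\,dv_g$ is itself nonincreasing in $s$.

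For the rigidity statement, if $\Phi'(s_0)=0$ for some $s_0\in[0,1)$, then the identity above forces $\int_{\{f>s_0\}}\Div_g X\,dv_g = 0$, because the prefactor $(1-s_0)(1+s_0)^{n-1}$ is strictly positive. Since $\Div_g X$ is continuous and nonnegative on the open set $\{f>s_0\}$, it must vanish identically there; by analyticity of $f$ and $g$ on $\R^n\setminus\overline{\Omega}$, $\Div_g X$ is analytic and the zero set propagates to the whole exterior region. Lemma~\ref{le:div_X} then converts this to $\nabla^2 f = \lambda g$ pointwise, with $\lambda = \Delta_g f/n \in \C^\infty(\R^n\setminus\Omega)$ after taking traces. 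The principal technical obstacle is justifying the application of the divergence theorem and coarea formula at \emph{critical} values of $f$, where the level sets may fail to be smooth and $\nu_g$, $H_g$ are only defined $\mathcal{H}^{n-1}$-almost everywhere. I would handle this via the $\varepsilon$-neighborhood truncation technique employed in Lemma~\ref{le:Phi-cont} (see also~\cite{agostiniani-mazzieri}): integrate on $\{s<f<S\}\setminus U_\varepsilon(\mathrm{Crit}(f))$, apply Stokes, and let $\varepsilon\to 0$, exploiting the Minkowski-dimension bound $\le n-2$ on the critical set of $f$ to guarantee the vanishing of the contribution from $\partial U_\varepsilon$.
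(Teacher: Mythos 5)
Your computational backbone is correct and essentially parallel to the paper's: the divergence of $V=\abs{\nabla f}_g^2\nabla f$ restricted to a level set, combined with~\eqref{eq:nabla2-h} and the first equation in~\eqref{eq:geom-system2}, does yield $\Div_g V/\abs{\nabla f}_g = -\tfrac{3ns}{1-s^2}\abs{\nabla f}_g^3 - 2\abs{\nabla f}_g^2 H_g$, and the product rule with the $\tfrac{(n+2)s}{1-s^2}I(s)$ correction collapses cleanly to~\eqref{eq:almost-main-f}. The identification of the bracket via Lemma~\ref{le:Psi}, the nonpositivity from Lemma~\ref{le:div_X}, the monotone-convergence argument for the limit, and the analytic-continuation rigidity argument are all in order. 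The paper uses the weighted vector field $(1-f^2)^{-n/2}\abs{\nabla f}_g^2\nabla f$ (for which the $\Delta_g f$ terms cancel, giving the cleaner identity~\eqref{eq:Phi-id1}), but this is a cosmetic difference from your choice.

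The genuine gap is in the passage from the integral identity to pointwise differentiability of $\Phi$ at \emph{all} $s\in\coint{0}{1}$, which is what the proposition asserts. You explicitly say ``to compute $I'(s)$ at a regular value of $f$,'' and the $\varepsilon$-neighborhood truncation you invoke only serves to legitimize the divergence theorem (hence the integral identity $I(S)-I(s)=\int_s^S G(\tau)\,d\tau$) when $s$ is critical --- it does \emph{not} give you $I'(s)=G(s)$ there. For the fundamental theorem of calculus to produce a derivative at every $s$, including critical values, you need the continuity of the map $\tau\mapsto G(\tau)=\int_{\{f=\tau\}}\Div_g V/\abs{\nabla f}_g\,d\sigma_g$. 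That is the nontrivial step: the set of regular values is open and dense, but $G$ could in principle jump at critical values. The paper handles this by rewriting the coarea integrand in the form~\eqref{eq:assignment2-coarea} as an explicit combination of $\Phi(\tau)$ and $\tau\mapsto\int_{\{f>\tau\}}\Div_g X\,dv_g$, and then proves continuity of each via the measure-theoretic result~\cite[Proposition~2.6]{Amb_Dap_Men}, using the finiteness of the measures $E\mapsto\int_E\phi^{\pm}\,dv_g$ and the vanishing $v_g$-measure of level sets (Hausdorff dimension $n-1$). Without carrying out this continuity argument, your proof establishes~\eqref{eq:almost-main-f} only for regular $s$, which is strictly weaker than the claim.

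A minor secondary point: the ``in fact, the left hand side is increasing'' assertion is deferred to ``a careful analysis'' exploiting that $s\mapsto\int_{\{f>s\}}\Div_g X\,dv_g$ is nonincreasing, but this alone does not suffice, since the prefactor $(1-s)(1+s)^{n-1}$ is \emph{not} monotone on $[0,1)$ (it increases up to $s=\tfrac{n-2}{n}$ and then decreases), so the product of the two is not obviously monotone. The paper's own proof is also terse here, only citing Lemma~\ref{le:div_X} for ``the monotonicity of $\Phi$,'' so this is less a gap unique to you than a place where both treatments are thin.
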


\begin{proof}
 Applying the coarea
 formula to~\eqref{eq:Phi-id1}, we obtain the following:
\begin{equation}
\label{eq:Phi-coarea}
 (1-s^2)\Phi(s)\,=\,-\int_s^1 \left[\int_{\{f=\tau\}} \frac{\nabla^2 f(\nabla f,\nabla f)}{2(1-f^2)^{\frac{n}{2}}} \cdot\frac{1}{\abs{\nabla f}_g}\d \sigma_g \right]\d \tau\,.
\end{equation}
 The statement of the Riemannian coarea formula is in~\cite[Exercise
 III.12, (d)]{chavel}. The missing proof follows from
 Fubini-Tonelli's theorem and~\eqref{eq:coarea}. By the fundamental theorem of calculus, we have that, if the function
 \begin{equation}
 \label{eq:assignment-coarea}
 \tau\mapsto \int_{\{f=\tau\}} \frac{\nabla^2 f(\nabla f,\nabla f)}{2(1-f^2)^{\frac{n}{2}}} \cdot\frac{1}{\abs{\nabla f}_g}\d \sigma_g 
 \end{equation} 
 is continuous, then $\Phi$ is differentiable.
 To prove the continuity of~\eqref{eq:assignment-coarea}, we first compute
 \begin{align}
 \notag
 \int_{\{f=\tau\}} \frac{\nabla^2 f(\nabla f,\nabla f)}{2(1-f^2)^{\frac{n}{2}}} \cdot\frac{1}{\abs{\nabla f}_g}\d \sigma_g\,&=\,
 \int_{\{f=\tau\}} \frac{\scal{\nabla\abs{\nabla f}^2_g}{\nabla f}_g}{(1-f^2)^{\frac{n}{2}}} \cdot\frac{1}{\abs{\nabla f}_g}\d \sigma_g
 \\
 \notag
 &=\, 
 \int_{\{f=\tau\}} \frac{\Scal{(1-f)(1+f)^{n-1}X+\frac{2}{n}\Delta_g f\nabla f}{\nabla f}_g}{(1-f^2)^{\frac{n}{2}}} \cdot\frac{1}{\abs{\nabla f}_g}\d \sigma_g
 \\
 \notag
 &=\,
 \int_{\{f=\tau\}} \left[\left(\frac{1+f}{1-f}\right)^{\!\frac{n-2}{2}}\scal{X}{\nu_g}\,-\,2f\frac{\abs{\nabla f}_g^3}{(1-f^2)^{\frac{n+2}{2}}}\right]\d\sigma_g
 \\
 \label{eq:assignment2-coarea}
 &=\,-\left(\frac{1+\tau}{1-\tau}\right)^{\!\frac{n-2}{2}}\int_{\{f>\tau\}}\Div_g X\d v_g-2\tau\Phi(\tau)\,,
 \end{align} 
 where in the last equality we have used Lemma~\ref{le:Psi}.
 Since we have already proved the continuity of $\Phi$ in Lemma~\ref{le:Phi-cont}, it remains to discuss the continuity of the function
 \begin{equation}
 \tau\mapsto\int_{\{f>\tau\}}\Div_g X\d v_g.
 \end{equation}
 This can be done following the exact same scheme as in the proof of the continuity of $\Phi$ in Lemma~\ref{le:Phi-cont}, namely, one defines a positive finite measure $\mu$ on $\R^n\setminus\Omega$ as
 $$
 \mu(E)\,=\,\int_E\Div_g X\d v_g
 $$
 and shows that $\tau\mapsto \mu(\{f>\tau\})$ is continuous using~\cite[Proposition~2.6]{Amb_Dap_Men}.
 
 We have thus proved the differentiability of $\Phi$. Now, taking the derivative of~\eqref{eq:Phi-coarea}, and using~\eqref{eq:assignment2-coarea}, we obtain
\begin{equation}
\label{eq:Phi'}
 (1-s^2)\Phi'(s)-\cancel{2s\Phi(s)}\,=\,-\left(\frac{1+s}{1-s}\right)^{\!\frac{n-2}{2}}\int_{\{f>s\}}\Div_g X\d v_g-\cancel{2s\Phi(s)}\,.
\end{equation}
 Identity~\eqref{eq:almost-main-f} follows from~\eqref{eq:Phi'} and Lemma~\ref{le:Psi}. The monotonicity of $\Phi$ is a consequence of the nonnegativity of the divergence of $X$, proved in Lemma~\ref{le:div_X}.
 
 Suppose now $\Phi'(s) = 0$ for some $s \in \coint{0}{1}$.
 Comparing~\eqref{eq:almost-main-f} and Lemma~\ref{le:Psi} we have that
 $\Div_g X = 0$ on $\{f>s\}$. Since $g$ and $f$ are analytic, so is $\Div_g X $, hence, by unique continuation, it holds $\Div_g X = 0$ on the whole $\R^n\setminus\Omega$. The thesis then follows from
 Lemma~\ref{le:div_X}.
\end{proof}

\noindent With Proposition~\ref{pro:almost-main-f}, we have almost completed the proof of Theorem~\ref{theo:main-thm-f} stated in the introduction. It only remains to show that the condition $\nabla^2 f= \lambda g$, $\lambda \in \C^\infty(\R^n \setminus \Omega)$, implies the spherical symmetry of the solution. This will be the object of Section~\ref{sec:equality}.

\medskip

\noindent{\bf Theorem~\ref{theo:main-thm-f} implies Theorem~\ref{theo:main-thm}.}
To conclude the section, we show that inequality~\eqref{eq:almost-main-f}, proved above, is equivalent to inequality~\eqref{eq:main-thm} in Theorem~\ref{theo:main-thm} (as pointed out above, the rigidity statements will be discussed later, in Section~\ref{sec:equality}). This is a rather straightforward
computation, using the formul\ae\  developed in
Section~\ref{sec:conformal-form}.
\begin{align*}
  \frac{\nabla^2_{ij} f \nabla_i f \nabla_j f}{\abs{\nabla f}_g^2} & = \frac{{(1-f)}^{-4} \nabla^2_{ij} f D_i f D_j f}{{(1-f)}^{-2} \abs{Df}^2} \\
  & = \frac{1}{{(1-f)}^2} \left[ \frac{n}{1-f^2} \abs{Df}^2 - \frac{1+f}{1-f^2} \abs{Df}^2 - \frac{1}{n-2} \left( \frac{1+f}{1-f} \right)^{\frac{n-2}{2}} (1-f^2) D^2_{ij} u \frac{D_i f D_j f}{\abs{Df}^2} \right] ,
\\
  \abs{\nabla f}_g H_g & = \Delta_g f - \frac{\nabla^2_{ij} f \nabla_i f \nabla_j f}{\abs{\nabla f}_g^2} \\
  & = (-nf) \frac{{(1-f)}^{-2} \abs{Df}^2}{1-f^2} - \frac{\nabla^2_{ij} f \nabla_i f \nabla_j f}{\abs{\nabla f}_g^2} \\
  & = - (n-1) \frac{\abs{Df}^2}{{(1-f)}^3} + \frac{1}{n-2} \left( \frac{1+f}{1-f} \right)^{\frac{n}{2}} D^2_{ij} u \frac{D_i f D_j f}{\abs{Df}^2} \\
  & = \frac{n-1}{n-2} \left[ - \frac{1}{n-2} \left( \frac{1+f}{1-f} \right)^{n-1} (1+f) \abs{Du}^2 + \frac{1}{n-1} \left( \frac{1+f}{1-f} \right)^{\frac{n}{2}} D^2_{ij} u \frac{D_i u  D_j u}{\abs{Du}^2} \right] ,
\\
  \d \sigma_g & = {(1-f)}^{n-1} \d \sigma .
\end{align*}
We are now able to rewrite the summands of~\eqref{eq:main-thm} in
terms of $u$ on a level set $\{f=s\}=\{u=t\}$. The first one is:
\begin{align*}
& -2 \int_{\{f=s\}} \abs{\nabla f}^2_g H_g \d \sigma_g \\
& \qquad {} = -2 \int_{\{f=s\}} {(1-f)}^{n-1} {(1-f)}^{-1} \abs{Df} \cdot \abs{\nabla f}_g H_g \d \sigma \\
& \qquad {} = -\frac{2}{n-2} {(1-s){(1+s)}^{n-1}} \int_{\{f=s\}} \left( \frac{1+f}{1-f} \right)^{-(n-1)+\frac{n-2}{2}+1} \abs{Du} \cdot \abs{\nabla f}_g H_g \d \sigma \\
& \qquad {} = -\frac{2(n-1)}{{(n-2)}^2} {(1-s){(1+s)}^{n-1}} \int_{\{f=s\}} \abs{Du} \cdot \left[ \frac{1}{n-1} \frac{D^2_{ij} u D_i u D_j u}{\abs{Du}^2} \left( \frac{1+f}{1-f} \right) - \frac{\abs{Du}^2}{n-2} \left( \frac{1+f}{1-f} \right)^{\frac{n}{2}} (1+f) \right] \d \sigma \\
& \qquad {} = -\frac{2(n-1)}{{(n-2)}^2} {(1-s){(1+s)}^{n-1}} \int_{\{u=t\}} \abs{Du}^2 \cdot \left[ \frac{H}{n-1} \left( \frac{1+f}{1-f} \right) - \frac{\abs{Du}}{n-2} \left( \frac{1+f}{1-f} \right)^{\frac{n}{2}} (1+f) \right ] \d \sigma .
\end{align*}
The second summand is:
\begin{align*}
& -2 \cdot \frac{(n-1)s}{1-s^2} \int_{\{f=s\}} \abs{\nabla f}_g^3 \d \sigma_g \\
& \qquad {} = -\frac{2(n-1)}{{(n-2)}^2} \int_{\{f=s\}} {(1-f)}^{n-1} \cdot \frac{f}{1-f^2} \cdot {(1-f)}^{-3} \left( \frac{1+f}{1-f} \right)^{3\frac{n-2}{2}} {(1-f^2)}^3 \abs{Du}^3 \d \sigma \\
& \qquad {} = -\frac{2(n-1)}{{(n-2)}^2}{(1-s){(1+s)}^{n-1}} \int_{\{f=s\}} \left( \frac{1+f}{1-f} \right)^{-(n-1)+3\frac{n-2}{2}+2} \cdot f \cdot \abs{Du}^3 \d \sigma \\
& \qquad {} = -\frac{2(n-1)}{{(n-2)}^2}{(1-s){(1+s)}^{n-1}} \int_{\{u=t\}} \left( \frac{1+f}{1-f} \right)^{\frac{n}{2}} \cdot f \cdot \abs{Du}^3 \d \sigma .
\end{align*}
From~\eqref{eq:def-f} we see that
$\frac{1+f}{1-f} = u^{-\frac{2}{n-2}}$, therefore:
\begin{align*}
& -2 \int_{\{f=s\}} \abs{\nabla f}^2_g H_g \d \sigma_g -2 \cdot \frac{(n-1)s}{1-s^2} \int_{\{f=s\}} \abs{\nabla f}_g^3 \d \sigma_g \\
& \qquad = -\frac{2(n-1)}{{(n-2)}^2} {(1-s){(1+s)}^{n-1}} \int_{\{u=t\}} \abs{Du}^2 \cdot \left(  \frac{H}{n-1} u^{-\frac{2}{n-2}} - \frac{\abs{Du}}{n-2} u^{-\frac{n}{n-2}}  \right) \d \sigma ,
\end{align*}
which is (up to a positive factor) the quantity that appears in
\eqref{eq:main-thm}.

\section{Geometric inequalities via strong maximum principle}
\label{sec:proof-part2}

The aim of this section is to prove inequality~\eqref{eq:main-thm2-f} in Theorem~\ref{theo:main-thm2-f} and to show how this is used to recover~\eqref{eq:main-thm2} in Theorem~\ref{theo:main-thm2}.

\medskip

\noindent{\bf Definition of $\gamma$ and regularity of ${\rm R}_\gamma$.}
We start by recalling some notations that were anticipated in Section~\ref{sec:strategy}. Given a regular solution $(\Omega,u)$ of problem~\eqref{eq:system}, we set $\rho={[\Ca(\Omega)]}^{{1}/{(n-2)}}$. We have already noticed that, in the case where $\Omega$ is a ball, then $\rho$ is equal to its radius. We denote by $S^n_\rho$ the sphere of radius $\rho$ with its standard embedding in $\R^{n+1}$, and we define the stereographic projection
\begin{equation*}
  \pi_\rho \colon S_\rho^n \setminus \{N\}  \longrightarrow \R^n\,, \qquad
  (z', z^{n+1})  \longmapsto \frac{\rho\,z'}{\rho-z^{n+1}} \, .
\end{equation*}
Let $f$ and $g$ be defined by~\eqref{eq:def-f} and~\eqref{eq:def-g}, respectively. As anticipated in the introduction, we consider the compact space
\begin{equation*}
M := \pi_\rho^{-1}(\R^n \setminus \Omega) \cup \{N\} \subset S_\rho^n
\end{equation*}
endowed with the metric 
\begin{equation}
\label{eq:def-h}
\def\arraystretch{1.5}
\gamma_{|_{y}}\,=\,
\left\{ \begin{array}{ll}
{(\pi_\rho^* g)}_{|_{y}} & \mbox{if }y\in\pi_\rho^{-1}(\R^n \setminus\Omega)\,,
\\
{(g_{S_\rho^n})}_{|_N} & \mbox{if }y=N\,.
\end{array}\right.
\end{equation}
We observe that the metric $\gamma$ can be seen as a conformal modification of the standard metric $g_{S_\rho^n}$. In fact, recalling the definition of the spherical metric $g_{\text{sph}} = {(1-f_0)}^2 g_{\R^n}$ on $\R^n$, where $f_0$ is as in~\eqref{eq:f0}, we compute 
\begin{equation*}
\pi_\rho^* g = {(1-f \circ \pi_\rho)}^2 \pi_\rho^* g_{\R^n} = \left( \frac{1-f
	\circ \pi_\rho}{1-f_0 \circ \pi_\rho} \right)^2 \pi^*_\rho g_{\rm sph}
= \left( \frac{1-f
	\circ \pi_\rho}{1-f_0 \circ \pi_\rho} \right)^2 {[\Ca(\Omega)]}^{-\frac{2}{n-2}} g_{S_{\rho}^n} \,.
\end{equation*}
In particular, using Corollary~\ref{cor:est-f} we see that the conformal factor in the rightmost hand side tends to $1$ at the north pole $N$, so that the metric $\gamma$ 
is in fact continuous at $N$ (and of course it is smooth on $M\setminus \{N\}$).
In order to show that $\gamma$ is also
$\C^2$ in a neighborhood of $N$, it is sufficient to prove that the conformal factor has the same regularity. Again, this readily follows from Corollary~\ref{cor:est-f}. 
Since $\gamma$ is $\C^2$, its scalar curvature ${\rm R}_\gamma$ is well-defined and continuous on the whole $M$.
Moreover, ${\rm R}_\gamma$ is clearly $\C^\infty$ in
$M\setminus\{N\}$. The following lemma tells us more about the
regularity of ${\rm R}_\gamma$ at the north pole.

\begin{lem}
	\label{le:w-reg}
Let $(\Omega,f,g)$ be a regular solution to problem~\eqref{eq:geom-system2} in the sense of Definition~\ref{ass:fg}. Consider the compact domain $M = \pi_\rho^{-1}(\R^n \setminus \Omega) \cup \{N\} \subset S^n_\rho$, equipped with the metric $\gamma$ defined in~\eqref{eq:def-h}. Then the scalar curvature ${\rm R}_\gamma$ of the metric $\gamma$ belongs to $\mathcal{W}^{1,2}(M)\cap\C^0(M)$.
\end{lem}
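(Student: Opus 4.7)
The continuity of $R_\gamma$ on $M$ is already noted in the paragraph preceding the statement, so $R_\gamma \in \mathcal{C}^0(M)$. Since $M$ is compact, this already gives $R_\gamma \in L^\infty(M) \subset L^2(M)$. The task is therefore to show that $R_\gamma$ admits a weak gradient in $L^2(M)$.

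My plan splits into two parts. On $M\setminus\{N\}$ the stereographic map $\pi_\rho$ is a diffeomorphism onto $\R^n\setminus\Omega$, where $g$ is smooth; hence $\gamma$ and $R_\gamma$ are smooth on $M\setminus\{N\}$ and the classical gradient $\nabla R_\gamma$ is well-defined there. I would first verify that $|\nabla R_\gamma|_\gamma\in L^2(M\setminus\{N\},dv_\gamma)$ via direct asymptotic analysis, and then promote this classical gradient to the weak gradient on $M$ by an integration-by-parts argument exploiting the fact that in dimension $n\geq 3$ a single point has zero $\mathcal{W}^{1,2}$-capacity.

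For the asymptotic step, I would pull back to $\R^n\setminus\Omega$ via $\pi_\rho$. Combining~\eqref{eq:R} with~\eqref{eq:|naf|} yields
\[
R_\gamma\circ\pi_\rho^{-1}(x) \,=\, n(n-1)\,u(x)^{-\frac{2(n-1)}{n-2}}\Bigl|\tfrac{Du(x)}{n-2}\Bigr|^{2},
\]
and differentiating produces two contributions, one from the factor $u^{-2(n-1)/(n-2)}$ and one from $|Du|^2$. Using the asymptotic profiles $u\sim C\abs{x}^{2-n}$, $|Du|\sim C\abs{x}^{1-n}$, $|D^2u|\sim C\abs{x}^{-n}$ as $\abs{x}\to\infty$ provided by Appendix~\ref{ap:asympt}, a direct computation shows that both contributions decay like $\abs{x}^{-1}$, so $|D(R_\gamma\circ\pi_\rho^{-1})|(x)\leq C\abs{x}^{-1}$ at infinity. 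Since $|\nabla R_\gamma|_\gamma^2\circ\pi_\rho^{-1}=(1-f)^{-2}|D(R_\gamma\circ\pi_\rho^{-1})|^{2}$ and $1-f\sim C\abs{x}^{-2}$ (immediate from~\eqref{eq:def-f} and the expansion of $u$), one finds $|\nabla R_\gamma|_\gamma^2\leq C\abs{x}^{2}$. The volume element transforms as $dv_\gamma=(1-f)^{n}\,dv_{g_{\R^n}}\leq C\abs{x}^{-2n}\,dv_{g_{\R^n}}$, so passing to polar coordinates
\[
|\nabla R_\gamma|_\gamma^{2}\,dv_\gamma \,\leq\, C\,\abs{x}^{1-n}\,d\abs{x}\,d\sigma\qquad \text{for }\abs{x}\gg 1,
\]
which is integrable on $\{\abs{x}>R\}$ for every $n\geq 3$. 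Combined with smoothness on compact subsets of $M\setminus\{N\}$, this yields $|\nabla R_\gamma|_\gamma\in L^{2}(M,dv_\gamma)$.

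For the second step, I would choose cutoffs $\eta_\epsilon\in\mathcal{C}^\infty_c(M\setminus\{N\})$ with $\eta_\epsilon\equiv 1$ outside a $\gamma$-ball of radius $2\epsilon$ around $N$, $\eta_\epsilon\equiv 0$ inside the ball of radius $\epsilon$, and $|\nabla\eta_\epsilon|_\gamma\leq C/\epsilon$. For any smooth vector field $Y$ on $M$, classical integration by parts on $M\setminus\{N\}$ against $\eta_\epsilon Y$ gives
\[
\int_{M}\eta_\epsilon\scal{Y}{\nabla R_\gamma}_\gamma\,dv_\gamma \,=\,-\int_{M} R_\gamma\bigl(\eta_\epsilon\Div_\gamma Y+\scal{Y}{\nabla\eta_\epsilon}_\gamma\bigr)\,dv_\gamma.
\]
The error term $\int_{M} R_\gamma\scal{Y}{\nabla\eta_\epsilon}_\gamma\,dv_\gamma$ is bounded by $\|R_\gamma\|_{L^{\infty}}\|Y\|_{L^{\infty}}\cdot C\epsilon^{-1}\cdot \mathrm{Vol}_\gamma(B_{2\epsilon}(N))\leq C\epsilon^{n-1}\to 0$ as $\epsilon\to 0$ since $n\geq 3$, and the remaining integrals converge by dominated convergence. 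This identifies the classical $\nabla R_\gamma$ with the distributional gradient of $R_\gamma$ on $M$, and the proof is complete.

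The main technical point is the asymptotic estimate on $|D(R_\gamma\circ\pi_\rho^{-1})|$: one must verify that the two terms produced by the product rule both decay individually like $\abs{x}^{-1}$, since a slower decay in either would spoil the $L^{2}$ bound in low dimensions. This works out because of the exact homogeneity of the leading profile $u\sim|x|^{2-n}$, so that both contributions carry the same power of $\abs{x}$ and no cancellation is required.
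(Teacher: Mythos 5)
Your proof is correct and reaches the same conclusion, but it takes a genuinely different route on both halves of the argument, so the comparison is worth recording. For the $L^2$ bound on the gradient, the paper does not differentiate the explicit expression $R_\gamma = n(n-1)\,u^{-2(n-1)/(n-2)}|Du/(n-2)|^2$; instead it invokes the identity $\nabla R_\gamma = n(n-1)(1+f)^{n-2}X$ (a direct consequence of~\eqref{eq:R} and~\eqref{eq:def-x2}) and reads off the decay $|\nabla R_\gamma|_\gamma = |\pi_\rho(y)|\cdot o(1)$ from the asymptotic $X^i = |x|^2 x_i\cdot o(1)$ in Corollary~\ref{cor:asympt}. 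You compute the same quantity by hand and obtain the slightly weaker $O(|x|)$ rather than $o(|x|)$; this loses nothing here (the $L^2$ integral is $O(T^{2-n})$, still vanishing for $n\geq 3$), and your observation that both terms of the product rule already decay like $|x|^{-1}$ individually, so no cancellation is needed, is a nice robustness check. Using the paper's vector-field identity would have handed you the stronger estimate for free, since $X$ essentially encodes the fact that $R_\gamma$ tends to a constant.

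For the weak-derivative step, the mechanism is the same -- cut off near $N$ and show the error vanishes -- but the coordinate frames differ. The paper splits a test function $\psi=\alpha_T+\beta_T$, where $\alpha_T$ is supported in $\{|\pi_\rho(y)|>T\}$ and satisfies $|\nabla\alpha_T|_\gamma\leq CT$; it then estimates both $\int\alpha_T\nabla R_\gamma\,dv_\gamma$ and $\int R_\gamma\nabla\alpha_T\,dv_\gamma$ to be $O(T^{1-n})$. You phrase the cutoff in terms of geodesic balls $B_\epsilon(N)$ with $|\nabla\eta_\epsilon|_\gamma\leq C/\epsilon$, bound the error by $C\epsilon^{-1}\operatorname{Vol}_\gamma(B_{2\epsilon}(N))\leq C\epsilon^{n-1}$, and recognize this as the statement that a point has zero $\mathcal{W}^{1,2}$-capacity when $n\geq 2$. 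The two bookkeepings are equivalent (note that the $\gamma$-distance between $\{|x|=T\}$ and $\{|x|=2T\}$ is $O(T^{-1})$, which is exactly why the paper's cutoff slope is $O(T)$ while yours is $O(\epsilon^{-1})$), but your framing in terms of capacity is arguably cleaner and makes the dimensional threshold transparent without unwinding the conformal factor.

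One small caution: since $M$ has boundary $\de M = \pi_\rho^{-1}(\de\Omega)$, the integration-by-parts identity you write down holds without boundary terms only if $Y$ (or $\eta_\epsilon Y$) is compactly supported in the interior; the paper sidesteps this by working with test functions $\psi\in\C_c^\infty(M)$. This does not affect the conclusion, but it is worth stating explicitly that the boundary contribution is absent by the choice of test objects rather than left implicit.
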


\begin{proof}
Since ${\rm R}_\gamma$ is continuous and $M$ is compact, clearly ${\rm R}_\gamma\in L^2(M)$.
 We want to show that also its gradient $\nabla {\rm R}_\gamma$ belongs to $ L^2(M)$. Since ${\rm R}_\gamma$ is smooth on $M\setminus\{N\}$ and $M$ is compact, it is enough to prove that the function
  $\nabla {\rm R}_\gamma$ belongs to $L^2$ near the north pole.
 We observe that, from~\eqref{eq:R} and~\eqref{eq:def-x2}, we have 
  $$
  \nabla {\rm R}_\gamma(y)=n(n-1){(1+f)}^{n-2} X(\pi_\rho(y))
  $$ 
  for all $y\in M\setminus\{N\}$. In particular, it holds
  $\abs{\nabla {\rm R}_\gamma}_\gamma^2 (y) = n^2{(n-1)}^2{{(1+f)}^{2(n-2)}} \abs{X}_g^2(\pi_\rho(y))$, and using Corollary~\ref{cor:asympt}:
  \[ \abs{\nabla {\rm R}_\gamma}_\gamma^2 (y)= \abs{\pi_\rho(y)}^2 \cdot o(1) , \]
  as $y\to N$ (or equivalently, as $\abs{\pi_\rho(y)}\to\infty$).
Thus, for all $T>0$, we obtain:
  \begin{align*}
    \int_{M\cap\{\abs{\pi_\rho(y)}>T\}} \abs{\nabla {\rm R}_\gamma}_\gamma^2\d v_\gamma & = \int_{\R^n\cap\{\abs{x}>T\}} \abs{x}^2 o(1)
  \cdot \abs{x}^{-2n} \d v 
  \\
  &=  o(1)\cdot\int_{\R^n\cap\{\abs{x}>T\}} \abs{x}^{-2n+2} \abs{x}^{n-1}
  \d \sigma_{g_{S^{n-1}}} d \abs{x} \\
& = o(1) \cdot
  \int_{T}^{\infty} \abs{x}^{-n+1} \d \abs{x}
  \\
  & = o(1) \cdot
  T^{2-n} .
  \end{align*}
  Since $n \geq 3$, this last term goes to zero as $T$ goes to infinity.

Now it only remains to prove that $\nabla {\rm R}_\gamma$ is the weak derivative of ${\rm R}_\gamma$, that is, we want to show 

\begin{equation}
\label{eq:weak-der}
\int_{M}\psi\nabla_i {\rm R}_\gamma\d v_\gamma\,=\,-\int_{M}{\rm R}_\gamma\nabla_i\psi \d v_\gamma ,\qquad \text{for all $\psi\in\C_c^\infty(M),\ i=1,\dots,n$.}
\end{equation}  
  Fix $\psi\in \C_c^\infty(M)$. For all $T>0$, we can write $\psi=\alpha_T+\beta_T$, where $\alpha_T,\beta_T$ are functions with support contained in $\set{\abs{\pi_\rho(y)}>T}\ni N$ and $\set{\abs{\pi_\rho(y)}<2T}$, respectively.
It is important to notice that, from the estimates on $g$ in Corollary~\ref{cor:asympt}, it easily follows that the distance between the sets $\R^n\cap\{\abs{x}<T\}$ and $\R^n\cap\{\abs{x}>2T\}$ is $O(T^{-1})$ as $T\to +\infty$. Pulling back on the sphere, we have that also the distance between $M\cap\set{\abs{\pi_\rho(y)}<T}$ and $M\cap\set{\abs{\pi_\rho(y)}>2T}$ is $O(T^{-1})$ as $T\to +\infty$.
From this follows that we can choose the function $\alpha_T$ in such a way that it holds
$$
  |\nabla\alpha_T|_\gamma\leq CT\,
  $$
  on the whole $M$, for some constant $C>0$.

  Notice that the support of $\beta_T$ does not contain the north pole, hence, from the smoothness of ${\rm R}_\gamma$ in $M\setminus\{N\}$, it immediately follows
  $$
  \int_{M}\beta_T\nabla_i {\rm R}_\gamma\d v_\gamma\,=\,-\int_{M}{\rm R}_\gamma\nabla_i\beta_T \d v_\gamma.
  $$
  Therefore, in order to prove~\eqref{eq:weak-der}, it is enough to show that both $\int_{M}\alpha_T\nabla_i {\rm R}_\gamma\d v_\gamma$ and $\int_{M}{\rm R}_\gamma\nabla_i\alpha_T \d v_\gamma$ go to zero as $T\to +\infty$.
  Using again $\abs{\nabla {\rm R}_\gamma}_\gamma^2(y) = \abs{\pi_\rho(y)}^2 \cdot o(1)$, we obtain, for all $T>0$:
  \begin{align*}
  \abs*{\int_{M}\alpha_T\nabla_i {\rm R}_\gamma\d v_\gamma}\,&\leq\,\max_M\abs{\alpha_T}\cdot\int_{M\cap\{\abs{\pi_\rho(y)}>T\}}\!\!|\nabla {\rm R}_\gamma|_\gamma\d v_\gamma
  \\
  &=\,\int_{M\cap\{\abs{\pi_\rho(y)}>T\}}\!\!|\pi_\rho(y)| \cdot o(1)\d v_\gamma
  \\
  &=\,\int_{\R^n\cap\{\abs{x}>T\}}\!\!|x| \cdot o(1)\d v_g
  \end{align*}
  where in the last equality we have pulled back the integral on $\R^n$. Recalling the relation between the densities $dv$ and $dv_g$ shown in Corollary~\ref{cor:asympt}, we compute
  \begin{align*}
  \abs*{\int_{M}\alpha_T\nabla_i {\rm R}_\gamma\d v_\gamma}\,&\leq\,\int_{\R^n\cap\{\abs{x}>T\}}\!\!|x|^{-2n+1} \cdot o(1)\d v
  \\
  &=\,\int_{\R^n\cap\{\abs{x}>T\}}\!\!\abs{x}^{-2n+1} \cdot o(1)\cdot \abs{x}^{n-1}\d\sigma_{g_{S^{n-1}}} d \abs{x}
  \\
  &=\,o(1)\cdot\int_T^{+\infty} \abs{x}^{-n}d\abs{x}
  \\
  &=\,o(1)\cdot T^{1-n}\,,
  \end{align*}
  hence this integral goes to zero as $T\to +\infty$. 
  
  The estimate of the second integral is done in a similar fashion. Recalling $\abs{\nabla \alpha_T}_\gamma\leq CT$ for some constant $C$, and using again Corollary~\ref{cor:asympt}:
  \begin{align*}
  \abs*{\int_{M}{\rm R}_\gamma\nabla\alpha_T\d v_\gamma}\,&\leq\,\max_M({\rm R}_\gamma)\cdot\int_{M\cap\{\abs{\pi_\rho(y)}>T\}}\!\!|\nabla \alpha_T|_\gamma\d v_\gamma
  \\
  &\leq\,C_1T\int_{\R^n\cap\{\abs{x}>T\}}\!\!d v_g
  \\
  &\leq\,C_2T\int_{\R^n\cap\{\abs{x}>T\}}\abs{x}^{-2n}\d v
  \\
  &=\,C_2T\int_{\R^n\cap\{\abs{x}>T\}}\!\!\abs{x}^{-2n} \cdot \abs{x}^{n-1}\d\sigma_{g_{S^{n-1}}}d \abs{x}
  \\
  &\leq\,C_3 T^{1-n}\,,
  \end{align*}
  where $0<C_1<C_2<C_3$ are constants and we have used the fact that ${\rm R}_\gamma$ is bounded on $M$ (because ${\rm R}_\gamma$ is continuous and $M$ is compact).
  Therefore also this goes to zero as $T\to+\infty$.
  This proves that $\nabla {\rm R}_\gamma$ is indeed the weak derivative of ${\rm R}_\gamma$ in $M$.
\end{proof}

\medskip

\noindent{\bf Elliptic inequality for ${\rm R}_\gamma$ and maximum principle. } Pulling back the quantities in formula~\eqref{eq:ellin-R2}, we find that the scalar curvature ${\rm R}_\gamma$ satisfies the following second order elliptic inequality
\begin{equation}
\label{eq:lapl_v}
\Delta_\gamma {\rm R}_\gamma \,-\,(n-2)\Scal{\frac{\nabla \phi}{1+\phi}}{\nabla {\rm R}_\gamma}_{\!\gamma}\,\,\geq\,\, 0
\end{equation}
pointwise on $M\setminus \{N\}$, where we have denoted by $\phi$ the completion of the pull-back of $f$. Namely, $\phi:M\to\R$ is defined by
$$
\def\arraystretch{1.5}
\phi(y)\,=\,
\left\{ \begin{array}{ll}
f\circ\pi_\rho(y) & \mbox{if }y\in\pi_\rho^{-1}(\R^n \setminus\Omega)\,,
\\
1 & \mbox{if }y=N\,.
\end{array}\right.
$$
Notice that $\phi$ is smooth in $M\setminus \{N\}$, and it is also $\mathcal{C}^2$ near $N$, as it can be easily deduced from Corollary~\ref{cor:est-f}. Having this in mind, we are going to prove that the differential inequality~\eqref{eq:lapl_v} holds in a weak sense on the whole $M$. Combining this with a weak version of the strong maximum principle, we deduce the following theorem.
%
%
%
%
%
%

\begin{pro}
	\label{pro:max_pr}
		Let $(\Omega,f,g)$ be a regular solution to problem~\eqref{eq:geom-system2} in the sense of Definition~\ref{ass:fg}. Consider the domain $M = \pi_\rho^{-1}(\R^n \setminus \Omega) \cup \{N\} \subset S^n_\rho$, equipped with the metric $\gamma$ defined by~\eqref{eq:def-h}. Then, for any $y\in M$, it holds 
	\begin{equation}
	\label{eq:max_pr}
	{\rm R}_\gamma(y)\leq \max_{\de M}{\rm R}_\gamma\,.
	\end{equation}
	Moreover, the equality is fulfilled for some $y\in M\setminus\de M=\pi_\rho^{-1}(\R^n\setminus\overline{\Omega})\cup\{N\}$, if and only if $\nabla^2 f= \lambda g$ on the whole $\R^n\setminus\Omega$, for
	some $\lambda \in \C^\infty(\R^n \setminus \Omega)$.
\end{pro}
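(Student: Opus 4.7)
The plan is to upgrade the pointwise elliptic inequality~\eqref{eq:lapl_v}, currently known only on $M \setminus \{N\}$, to a weak inequality on all of $M$, then apply a strong maximum principle for $\mathcal{W}^{1,2}$ subsolutions, and finally derive the rigidity from Lemma~\ref{le:div_X}.

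First I would rewrite the operator in divergence form: the identity
\[
\Delta_\gamma u - (n-2) \Scal{\frac{\nabla \phi}{1+\phi}}{\nabla u}_{\!\gamma} = (1+\phi)^{n-2} \Div_\gamma\!\bigl( (1+\phi)^{-(n-2)} \nabla u \bigr)
\]
recasts~\eqref{eq:lapl_v} as $\Div_\gamma\!\bigl( (1+\phi)^{-(n-2)} \nabla {\rm R}_\gamma \bigr) \geq 0$ classically on $M \setminus \{N\}$. For every nonnegative $\psi \in \C_c^\infty(M)$, integrating by parts on $M \setminus B_\epsilon(N)$ produces a boundary term on $\de B_\epsilon(N)$; this term tends to zero as $\epsilon \to 0^+$ thanks to the $\mathcal{W}^{1,2}$ regularity of ${\rm R}_\gamma$ established in Lemma~\ref{le:w-reg} (the area of $\de B_\epsilon(N)$ is $O(\epsilon^{n-1})$ and $\nabla {\rm R}_\gamma \in L^2$, so Cauchy-Schwarz closes the estimate). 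This yields the weak formulation
\[
\int_M (1+\phi)^{-(n-2)} \scal{\nabla \psi}{\nabla {\rm R}_\gamma}_\gamma \d v_\gamma \leq 0 \qquad \text{for every } \psi \in \C_c^\infty(M),\ \psi \geq 0,
\]
valid on all of $M$.

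Since $\phi \in \C^2(M)$ with $1+\phi \geq 1$, the operator is uniformly elliptic with continuous coefficients, and the strong maximum principle for weak subsolutions (see, e.g., Gilbarg-Trudinger, Theorem~8.19) applied to ${\rm R}_\gamma \in \mathcal{W}^{1,2}(M) \cap \C^0(M)$ delivers both~\eqref{eq:max_pr} and the dichotomy that if the maximum is attained at an interior point of $M$, then ${\rm R}_\gamma$ must be constant on $M$. For the rigidity, assume equality in~\eqref{eq:max_pr} at some $y \in M \setminus \de M$: by the above ${\rm R}_\gamma$ is constant, so $\nabla {\rm R}_\gamma \equiv 0$; recalling from~\eqref{eq:R} and~\eqref{eq:def-x2} that $X = \tfrac{1}{n(n-1)}(1+f)^{-(n-2)} \nabla {\rm R}$ on $\R^n \setminus \Omega$, this forces $X \equiv 0$ there, and in particular $\Div_g X \equiv 0$. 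The equality statement in Lemma~\ref{le:div_X} then yields $\nabla^2 f = \lambda g$ pointwise on $\R^n \setminus \Omega$, for some $\lambda \in \C^\infty(\R^n \setminus \Omega)$.

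The main technical obstacle is precisely the passage from the pointwise inequality on $M \setminus \{N\}$ to the weak one on $M$, i.e.\ justifying that the boundary contributions at $\de B_\epsilon(N)$ vanish in the limit. This is delicate because ${\rm R}_\gamma$ is only known to be $\mathcal{W}^{1,2}$ in a neighborhood of $N$, not classically differentiable, so one cannot integrate by parts naively through the north pole; the $L^2$-bound on $\nabla {\rm R}_\gamma$ combined with the shrinking measure of $\de B_\epsilon(N)$ is exactly what is needed to bridge the gap.
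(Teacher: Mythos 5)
Your overall architecture matches the paper's: pass to a divergence-form operator with weight $(1+\phi)^{-(n-2)}$, establish the elliptic inequality for ${\rm R}_\gamma$ in the $\mathcal{W}^{1,2}$-sense on all of $M$, invoke the strong maximum principle from Gilbarg--Trudinger Theorem~8.19, and read off the rigidity from Lemma~\ref{le:div_X} via ${\rm R}_\gamma$ constant $\Rightarrow$ $X\equiv 0$ $\Rightarrow$ $\Div_g X\equiv 0$. The divergence-form identity you write is correct, and the maximum-principle and rigidity parts are the same as in the paper.

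The gap is in how you get the weak inequality past the north pole. You integrate by parts on $M\setminus B_\epsilon(N)$ and claim the boundary term on $\de B_\epsilon(N)$ vanishes because the area is $O(\epsilon^{n-1})$ and $\nabla {\rm R}_\gamma\in L^2(M)$, ``so Cauchy--Schwarz closes the estimate.'' That step does not work as stated: a Cauchy--Schwarz inequality on the surface $\de B_\epsilon(N)$ requires control of $\int_{\de B_\epsilon}\abs{\nabla {\rm R}_\gamma}_\gamma^2\d\sigma_\gamma$, and the $L^2(M)$ finiteness of $\nabla {\rm R}_\gamma$ gives no bound on this surface integral for a generic $\epsilon$ (an $L^2$ function can have arbitrarily bad traces on a fixed sphere). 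To make your route work you would have to add a coarea/mean-value selection: since $\int_{B_{\delta}(N)}\abs{\nabla {\rm R}_\gamma}_\gamma^2\d v_\gamma\to 0$ as $\delta\to 0$, one can pick $\epsilon_j\in(\delta_j/2,\delta_j)$ with $\int_{\de B_{\epsilon_j}}\abs{\nabla {\rm R}_\gamma}_\gamma^2\d\sigma_\gamma\leq 2\,\delta_j^{-1}\int_{B_{\delta_j}}\abs{\nabla {\rm R}_\gamma}_\gamma^2\d v_\gamma$, after which Cauchy--Schwarz and $n\geq 3$ do give a boundary term $o(1)$ along that subsequence (which suffices, since the limit of the left-hand side is $\epsilon$-independent). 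Alternatively, and this is what the paper does, one avoids the trace issue altogether: decompose the test function $\psi=\alpha_T+\beta_T$ with $\alpha_T$ supported near $N$ and $\abs{\nabla\alpha_T}_\gamma\leq CT$, and use the pointwise asymptotic $\abs{\nabla {\rm R}_\gamma}_\gamma=\abs{\pi_\rho(y)}\cdot o(1)$ from Corollary~\ref{cor:asympt} (not merely the $L^2$ consequence recorded in Lemma~\ref{le:w-reg}) to show the $\alpha_T$-contribution decays like $T^{2-n}$. Either repair fills the gap; as currently phrased, the vanishing of the boundary term is not justified.
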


\begin{proof}
The strategy of the proof consists in showing that inequality~\eqref{eq:lapl_v}
holds in the weak sense on the whole $M$, so that we can use a strong maximum principle (see for example~\cite[Theorem~8.19]{gilbarg-trudinger}) to deduce the thesis.
This amounts to prove that for every nonnegative $\psi\in\C_c^\infty(M)$ it holds
\begin{equation}
\label{eq:thesis}
\int_{M}\left[\scal{\nabla \psi}{\nabla {\rm R}_\gamma}_\gamma+(n-2)\psi\Scal{\frac{\nabla \phi}{1+\phi}}{\nabla {\rm R}_\gamma}_{\!\gamma}\right]\d v_\gamma \,\, \leq \,\,0 \,.
\end{equation}
Fix a nonnegative $\psi\in \C_c^\infty(M)$. As in the proof of Lemma~\ref{le:w-reg}, for all $T>0$ we write $\psi=\alpha_T+\beta_T$, where $\alpha_T,\beta_T$ are nonnegative functions with support contained in $\set{\abs{\pi_\rho(y)}>T}\ni N$ and $\set{\abs{\pi_\rho(y)}<2T}$, respectively. As already noticed in the proof of Lemma~\ref{le:w-reg}, the function $\alpha_T$ can be chosen in such a way that $
|\nabla\alpha_T|_\gamma\leq CT$, for some constant $C>0$. Furthermore, it holds $\abs{\nabla {\rm R}_\gamma}_\gamma^2 (y)= \abs{\pi_\rho(y)}^2 \cdot o(1)$ as $y$ approaches the north pole. We also observe that $\abs{\nabla\phi}_\gamma=\abs{\pi_\rho(y)}\cdot(1+o(1))$, as it follows from Corollary~\ref{cor:asympt}.

Since the support of $\beta_T$ does not contains the north pole, and ${\rm R}_\gamma$ is smooth in $M\setminus \{N\}$, we immediately have the following estimate
\begin{multline}
\label{eq:weak_v_1}
\int_{M}\left[\scal{\nabla \beta_T}{\nabla {\rm R}_\gamma}_\gamma+(n-2)\beta_T\Scal{\frac{\nabla \phi}{1+\phi}}{\nabla {\rm R}_\gamma}_{\!\gamma}\right]\d v_\gamma
\,=
\\
=\,-\int_{M\cap\set{\abs{\pi_\rho(y)}<2T}}\beta_T\left[\Delta_\gamma {\rm R}_\gamma \,-\,(n-2)\Scal{\frac{\nabla \phi}{1+\phi}}{\nabla {\rm R}_\gamma}_{\!\gamma}\right]\d v_\gamma\leq 0\,,
\end{multline}
where in the last inequality we have used formula~\eqref{eq:lapl_v}, that holds outside of $N$.

The right-hand side of~\eqref{eq:weak_v_1} being independent of $T$, in order to prove~\eqref{eq:thesis} it is enough to show that
\begin{equation}
\label{eq:thesis-psi1}
\lim_{T\to+\infty}\int_{M}\left[\scal{\nabla \alpha_T}{\nabla {\rm R}_\gamma}_\gamma+(n-2)\alpha_T\Scal{\frac{\nabla \phi}{1+\phi}}{\nabla {\rm R}_\gamma}_{\!\gamma}\right]\d v_\gamma\,=\,0.
\end{equation}
 Converting as usual to
$\R^n$ with the metric $g$, from
Corollary~\ref{cor:est-f} and Corollary~\ref{cor:asympt} we obtain:
\begin{align}
 &\abs*{\int_{M \cap \{\abs{\pi_\rho(y)}>T\}}\left[\scal{\nabla \alpha_T}{\nabla {\rm R}_\gamma}_\gamma+(n-2) \alpha_T \Scal{\frac{\nabla \phi}{1+\phi}}{\nabla {\rm R}_\gamma}_{\!\gamma}\right]\d v_\gamma}  \leq 
 \nonumber
 \\
  &	\quad\qquad\qquad\qquad\qquad\qquad\qquad \leq \int_{M \cap \{\abs{\pi_\rho(y)}>T\}} \left[\abs{\nabla  \alpha_T}_\gamma +(n-2)\abs{\alpha_T}\cdot\abs*{\frac{\nabla\phi}{1+\phi}} \right] \cdot\abs{\nabla {\rm R}_\gamma}_\gamma\d v_\gamma \nonumber
  \\
  & \quad\qquad\qquad\qquad\qquad\qquad\qquad = \int_{M \cap \{\abs{\pi_\rho(y)}>T\}} \Big(T\abs{\pi_\rho(y)} + \abs{\pi_\rho(y)}^2 \Big)\cdot o(1)\d v_\gamma \nonumber \\
   & \quad\qquad\qquad\qquad\qquad\qquad\qquad = \int_{M \cap \{\abs{\pi_\rho(y)}>T\}}  \abs{\pi_\rho(y)}^2 \cdot o(1)\d v_\gamma \nonumber \\
  & \quad\qquad\qquad\qquad\qquad\qquad\qquad = \int_{\R^n\cap\{\abs{x}>T\}} \abs{x}^2 \cdot o(1) \cdot \abs{x}^{-2n} \d v \nonumber \\
  & \quad\qquad\qquad\qquad\qquad\qquad\qquad = o(1) \cdot \int_{\R^n\cap\{\abs{x}>T\}} \abs{x}^{-2n+2} \abs{x}^{n-1} \d \sigma_{g_{S^{n-1}}} d \abs{x} \nonumber \\
  & \quad\qquad\qquad\qquad\qquad\qquad\qquad = T^{2-n} \cdot o(1) . \label{eq:weak_v_2}
\end{align}
Inequalities~\eqref{eq:weak_v_1} and~\eqref{eq:weak_v_2} are true for any $T>0$ and $n\geq 3$, hence~\eqref{eq:thesis} is proved and ${\rm R}_\gamma$ indeed solves~\eqref{eq:ellin-R2} in the $\mathcal{W}^{1,2}(M)$--sense. 

We are now in the position to apply
~\cite[Theorem~8.19]{gilbarg-trudinger}. The version that we need of this result states that, if a continuous $\mathcal{W}^{1,2}$ function satisfies an elliptic inequality of the second order  in the $\mathcal{W}^{1,2}$--sense (with some standard hypoteses on the coefficients, that are trivially satisfied in our case), then the strong maximum principle applies.
We have already proved that the function ${\rm R}_\gamma$ belongs to $\C^0(M)\cap\mathcal{W}^{1,2}(M)$ and that it satisfies the elliptic inequality~\eqref{eq:lapl_v} in the weak sense, hence we have ${\rm R}_\gamma\leq \max_{\de M} {\rm R}_\gamma$ as wished.
 Moreover, if equality holds in~\eqref{eq:max_pr}, then ${\rm R}_\gamma$ is constant on
 $M$. Therefore, from~\eqref{eq:R} and~\eqref{eq:def-x2} we deduce that the field $X$ is everywhere
 vanishing, and so is its divergence. We can now conclude using Lemma~\ref{le:div_X}.
\end{proof}

\noindent In order to conclude the proof of Theorem~\ref{theo:main-thm2-f}, it only remains to show that the identity $\nabla^2 f = \lambda g$ forces $\Omega$ and $f$ to be rotationally symmetric (the fact that $(M,\gamma)$ is isometric to a sphere follows easily). This will be the subject of Section~\ref{sec:equality}.

\medskip

\noindent{\bf Theorem~\ref{theo:main-thm2-f} implies Theorem~\ref{theo:main-thm-ext2}. } We conclude this section showing that inequality~\eqref{eq:max_pr} implies formul\ae~\eqref{eq:theo-ext-1}. Evaluating~\eqref{eq:max_pr} at $y=N$ and recalling from~\eqref{eq:R},~\eqref{eq:est_f2} that
 \begin{equation}
 \label{eq:aux2}
 {\rm R}_\gamma=n(n-1)\frac{\abs{\nabla\phi}^2}{1-\phi^2}\,, \quad\hbox{ and } \quad\frac{\abs{\nabla\phi}^2}{1-\phi^2}(N)={[\Ca(\Omega)]}^{-\frac{2}{n-2}}	\,,
\end{equation}
 we obtain
 \begin{equation}
 \label{eq:aux1}
 \left[\frac{1}{\Ca(\Omega)}\right]^{\frac{2}{n-2}}\leq \max_{\de M}\frac{\abs{\nabla \phi}_\gamma^2}{1-\phi^2}\,.
 \end{equation}
Since $\phi=f\circ\pi_\rho$, $\de M=\pi_\rho^{-1}(\de\Omega)$, $f=0$ on $\de\Omega$, $u=1$ on $\de\Omega$, using formula~\eqref{eq:|naf|} we find
\begin{equation}
\label{eq:aux}
\max_{\de M}\frac{\abs{\nabla \phi}_\gamma^2}{1-\phi^2}\leq \max_{\de \Omega}\abs{\nabla f}^2_g\,=\, \max_{\de \Omega}\abs*{\frac{D u}{n-2}}^2\,.
\end{equation}
Comparing formul\ae~\eqref{eq:aux1},~\eqref{eq:aux}, we obtain the first inequality in~\eqref{eq:theo-ext-1}. In order to prove the second inequality in~\eqref{eq:theo-ext-1}, we evaluate~\eqref{eq:max_pr} at a point $y\in\pi_\rho^{-1}(\R^n\setminus\Omega)$. Recalling~\eqref{eq:aux} and~\eqref{eq:aux2}, we have
$$
 \frac{\abs{\nabla \phi}_\gamma^2}{1-\phi^2}(y)\,\leq\, \max_{\de \Omega}\abs*{\frac{D u}{n-2}}^2\,.
 $$ 
Since $\phi=f\circ\pi_\rho$, calling $x=\pi_\rho(y)$ and using~\eqref{eq:|naf|}, we obtain
$$
u^{-2\left(\frac{n-1}{n-2}\right)}\,\abs*{\frac{D u}{n-2}}^2(x)\,\,\leq\,\, \max_{\de \Omega}\abs*{\frac{D u}{n-2}}^2\,,
$$
for all $x\in\R^n\setminus\overline{\Omega}$.
An easy algebraic manipulation of this inequality leads to the second formula in~\eqref{eq:theo-ext-1}.

%
%
%
%

\section{Proof of the rigidity statements}
\label{sec:equality}

In the two previous sections we obtained the equation
$\nabla^2 f = \lambda g$, $\lambda\in\C^{\infty}(\R^n\setminus\Omega)$, as a byproduct of the equality case. Such
condition is very strong, and in this section we show that it induces a very rigid geometric structure
on the problem. 

In order to simplify the formul\ae, {\em throughout this section we will assume
$\Ca(\Omega)=1$}.
The case of a general capacity can be addressed following the exact same strategy, at the cost of slightly more cumbersome computations. Alternatively, one can always reduce himself to study the case $\Ca(\Omega)=1$ via an homothety of $\Omega$, and an appropriate rescaling of $u$ so that the boundary condition in~\eqref{eq:system} remains satisfied. It is easy to check that the new $f,g$, obtained from the rescaled $u$ via~\eqref{eq:def-f},~\eqref{eq:def-g}, still satisfy $\nabla^2 f = \lambda g$, for some (possibly different) function $\lambda$.

\begin{pro}
  \label{pro:warped}
  	Let $(\Omega,f,g)$ be a regular solution to problem~\eqref{eq:geom-system2} in the sense of Definition~\ref{ass:fg}, with $\Ca(\Omega)=1$. Suppose that the
  equation $\nabla^2 f = \lambda g$ is satisfied on
  $\R^n \setminus \Omega$, where
  $\lambda \in \C^\infty(\R^n \setminus \Omega)$. Then:
  \begin{itemize}
  \item The function $f$ is regular everywhere and in particular
    $\abs{\nabla f}_g^2 = 1-f^2 \neq 0$ on $\R^n \setminus \Omega$.
  \item The region $\R^n \setminus \Omega$ is isometric to
    $\coint{0}{\frac{\pi}{2}} \times \de\Omega$ with the warped product metric
    $$
    dr \otimes dr + \cos^2 (r) \cdot g_{\alpha\beta}^{\de\Omega}\cdot dx^\alpha \otimes dx^\beta,
    $$ where
    $g_{\alpha\beta}^{\de\Omega}$ is the restriction of the metric $g$ to $\de\Omega$.
  \item The above isometry sends the level sets of $f$ to level sets of $r$;
    in particular, it sends $\{f=s\}$ to $\{r=\arcsin s\}$ for all
    $s \in \coint{0}{1}$. This implies that a point tends
    to $\infty$ in $\R^n$ if and only if its image have
    $r \to \frac{\pi}{2}$.
  \end{itemize}
\end{pro}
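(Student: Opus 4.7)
The plan is to first leverage the pointwise identity $\nabla^2 f = \lambda g$ to pin down the quantity $\abs{\nabla f}_g^2/(1-f^2)$. Tracing the hypothesis gives $\Delta_g f = n\lambda$, and comparing with the first equation of~\eqref{eq:geom-system2} identifies
$$
\lambda \,=\, -f\,\frac{\abs{\nabla f}_g^2}{1-f^2}.
$$
On the other hand, contracting $\nabla_i\nabla_j f = \lambda g_{ij}$ with $\nabla^j f$ and recalling~\eqref{eq:distr-nabla2} produces $\nabla\abs{\nabla f}_g^2 = 2\lambda\,\nabla f$. Substituting these two facts into a direct computation of $\nabla\bigl(\abs{\nabla f}_g^2/(1-f^2)\bigr)$ yields exact cancellation of the two resulting terms, so this quotient is locally constant. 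Since $\R^n\setminus\Omega$ is connected and the asymptotic estimate~\eqref{eq:est_f2}, combined with the normalization $\Ca(\Omega)=1$, forces the quotient to tend to $1$ at infinity, I conclude $\abs{\nabla f}_g^2 = 1-f^2$ everywhere. This expression is positive since $f<1$, which gives the first bullet; moreover, $\lambda$ simplifies to $-f$, so the Obata-type relation reads $\nabla^2 f = -f\,g$.

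Next I would introduce the coordinate $r = \arcsin(f)\in\coint{0}{\pi/2}$, so that $f=\sin r$ and $dr = df/\abs{\nabla f}_g$. Completing $r$ to a chart $(r, x^2, \ldots, x^n)$ exactly as in Section~\ref{sec:conformal-form} (i.e., transporting coordinates on $\{f=0\}=\partial\Omega$ by the flow of $\partial_r=\nabla f/\abs{\nabla f}_g=\nu_g$), the metric takes the form
$$
g \,=\, dr\otimes dr + g_{\alpha\beta}(r,x)\,dx^\alpha\otimes dx^\beta,
$$
with Greek indices tangential. The evolution of the induced tangential metric is governed by $\partial_r g_{\alpha\beta} = 2(h_g)_{\alpha\beta}$, and formula~\eqref{eq:mean-curv} combined with $\nabla^2 f = -fg$ and $\abs{\nabla f}_g = \cos r$ yields $(h_g)_{\alpha\beta} = \nabla^2_{\alpha\beta} f/\abs{\nabla f}_g = -\tan(r)\,g_{\alpha\beta}$. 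Thus the tangential components obey the linear ODE $\partial_r g_{\alpha\beta} = -2\tan(r)\,g_{\alpha\beta}$ with initial condition $g_{\alpha\beta}(0,x)=g^{\partial\Omega}_{\alpha\beta}(x)$, whose unique solution is $g_{\alpha\beta}(r,x) = \cos^2(r)\,g^{\partial\Omega}_{\alpha\beta}(x)$. This produces the warped product expression of the second bullet, at least locally.

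The main obstacle, and the last step in the plan, is to upgrade this local warped-product picture into a global diffeomorphism $\Psi\colon\coint{0}{\pi/2}\times\partial\Omega \to \R^n\setminus\Omega$, sending $(r,x)$ to the integral curve of $\nu_g$ starting at $x\in\partial\Omega$ evaluated at time $r$. Direct integration along such a curve gives $f\circ\Psi(r,x) = \sin r$, which confines the trajectory to the compact level set $\{f=\sin r\}$ at each time and thus rules out finite-time escape as $r$ varies in $\coint{0}{\pi/2}$; hence $\Psi$ is defined globally. Injectivity is clear because $f$ is strictly increasing along each integral curve, while surjectivity follows because any $q\in\R^n\setminus\Omega$ can be flowed backward along $-\nu_g$ until $f$ drops to zero, meeting $\partial\Omega$ at a unique point. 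The third bullet and the identification of $r\to\pi/2$ with $\abs{x}\to\infty$ then follow immediately from $f=\sin r$ together with the boundary condition $f\to 1$ at infinity in~\eqref{eq:geom-system2}.
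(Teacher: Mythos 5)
Your proof is correct and follows the same fundamental strategy as the paper (trace of $\nabla^2 f = \lambda g$ pins down $\lambda$, contraction with $\nabla f$ shows $\abs{\nabla f}_g^2/(1-f^2)$ is constant, the asymptotic estimate~\eqref{eq:est_f2} identifies the constant with $1$, and the Hessian equation then forces the warped-product form), but you reorganize it in a way that is somewhat cleaner. The paper's proof does not establish $\abs{\nabla f}_g^2 = 1-f^2$ up front; instead it invokes Hopf's lemma to guarantee $\de\Omega$ is a regular level set, introduces the threshold $S = \sup\{s : \nabla f \neq 0 \text{ on } \{f<s\}\}$, appeals to Morse theory for the product structure on $\{f<S\}$, defines $r$ by the ODE $dr/df = 1/\abs{\nabla f}_g$, reads off the warped form from the Hessian in these coordinates, and only afterwards shows $(f')^2/(1-f^2)$ is constant in $r$ and hence $S=1$. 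By proving the global identity $\abs{\nabla f}_g^2 = 1-f^2$ first, via the direct gradient computation $\nabla\bigl(\abs{\nabla f}_g^2/(1-f^2)\bigr)=0$, you get regularity of $f$ on all of $\R^n\setminus\Omega$ at once, can set $r=\arcsin f$ explicitly, and can build the global diffeomorphism by a transparent flow argument instead of citing Hopf's lemma and Morse theory. Your derivation of the warped form goes through the second-fundamental-form evolution $\de_r g_{\alpha\beta} = 2(h_g)_{\alpha\beta} = -2\tan(r)\,g_{\alpha\beta}$ rather than the paper's direct Hessian comparison, but both are equivalent one-line calculations once $\abs{\nabla f}_g = \cos r$ is in hand.
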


\begin{proof}
Let $s \in \coint{0}{1}$ be a regular value for $f$ and take a vector field $Y$ tangent to
$\{f=s\}$ at a point $p$. Then
\[ \nabla_Y \abs{\nabla f}_g^2 = 2 \nabla^2 f(Y, \nabla f) = 2 \lambda
\scal{Y}{\nabla f}_g = 0 , \]
so $\abs{\nabla f}_g$ is locally constant on every regular level set of $f$. In
particular it can be locally written as a function of $f$.  Let us
define $S$ to be the supremum of all $s \in \coint{0}{1}$ such that
$\abs{\nabla f}_g \neq 0$ on the region $\{f < s \}$.  By Hopf's
lemma (see~\cite[Lemma 10.55]{helms}), $0$ is a regular value of $f$ and since the set of regular
values is always open this implies that $S > 0$. All the computations from now on will be done in the region $\{ f < S \}$.
By standard Morse theory (see for example
\cite[Theorem 2.2]{hirsch}) the region $\{f<S\}$ is diffeomorphic to $\coint{0}{1} \times \{f=0\} = \coint{0}{1}
\times \de\Omega $, so local coordinates
on $\de\Omega$ can be extended, together with $f$, on the whole region $\{f<S\}$.

On the region $\{f<S\}$, the function $\abs{\nabla f}_g(f)$ is well-defined and it
does not vanish, so we can use it to define the function $r(f)$ given
by the ordinary differential equation:
\[ 
\def\arraystretch{1.8}
\left\{ \begin{array}{ll}
  \frac{dr}{df} = \frac{1}{\abs{\nabla f}_g} & \text{for $0 \leq f < S$}, \\
  r(0) = 0 .
\end{array} \right. \]
It is easy to see that $r$ can be used as a local coordinate instead of $f$ and that the metric can be
written as
\[ g = dr \otimes dr + g_{\alpha\beta}(r, x) dx^\alpha \otimes dx^\beta . \]
From now on we use Greek letters for indices that must range over tangential directions.
We denote by $f'$ and $f''$ the derivatives of $f$ with
respect to the new variable $r$. From the definition of $r$ we have that $f' = \frac{df}{dr} = \abs{\nabla f}_g$.
The Christoffel symbols of $g$ have the following form:
\[ \Gamma_{rr}^r = \Gamma_{rr}^\gamma = \Gamma_{\alpha r}^r = 0 , \qquad
\Gamma_{\alpha\beta}^r = - \frac{1}{2} \de_r g_{\alpha\beta} , \qquad
\Gamma_{\alpha r}^\gamma = \frac{1}{2} g^{\gamma\delta} \de_r g_{\alpha\delta} .
\]
Computing the Hessian with these coordinates:
\[ \nabla^2 f = f'' dr \otimes dr + f' \nabla^2 r = f'' dr \otimes dr + \frac{1}{2} f' \de_r g_{\alpha\beta} \, dx^\alpha \otimes dx^\beta . \]
Since $\nabla^2 f = \lambda g$ we obtain by comparison:
\begin{align}
  f'' & = \lambda \label{eq:fpp-lam} \\
  \frac{1}{2} f' \de_r g_{\alpha\beta} & = \lambda g_{\alpha\beta} . \nonumber
\end{align}
In particular $\de_r \log g_{\alpha\beta} = 2 \de_r \log f'$, so
\[ g_{\alpha\beta}(x, r) = \left[ \frac{f'(r)}{f'(0)} \right]^2 g_{\alpha\beta}(x, 0) \]
(at least if $g_{\alpha\beta}(x, 0) > 0$; if it is negative we can use the
same argument for $-g_{\alpha\beta}$; if it is zero, then it remains
identically zero; in any case, the formula above holds). In other
words, $g$ is a warped metric defined over the base space $\{f=0\}$.

Let us now recall that $f$ satisfies by hypothesis the system~\eqref{eq:geom-system},
so it must hold 
$$\lambda = -f \frac{\abs{\nabla f}^2_g}{1-f^2}\,.
$$
Expanding in~\eqref{eq:fpp-lam}, we have $f'' = -f
\frac{\abs{\nabla f}_g^2}{1-f^2} = - \frac{f{(f')}^2}{1-f^2}$. This
implies that
\[ \frac{1}{2} \frac{\de}{\de r} \left[\frac{{(f')}^2}{1-f^2} \right] = \frac{f' f'' (1-f^2) + {(f')}^2 f f'}{{(1-f^2)}^2} = \frac{-{(f')}^3 f + {(f')}^3 f}{{(1-f^2)}^2} = 0 , \]
and confronting with estimate~\eqref{eq:est_f2}, we deduce ${(f')}^2 = (1-f^2)$ on the whole $\R^n\setminus\Omega$.  
It follows that the
equation for $f$ can be rewritten as
\begin{equation*}
\def\arraystretch{1.3}
\left\{ \begin{array}{r@{}c@{}l}
  f'' \,& \,=\, &\, - f ,\\
  f(0)\, & \,=\, &\, 0 ,\\
  f'(0) \,& \,=\, &\, 1 ,
  \end{array}\right.
\end{equation*}
whose solution is $f(r) = \sin (r)$. This implies that $f$
is defined and regular for $r \in \coint{0}{\frac{\pi}{2}}$, which
coincides with the region $f \in \coint{0}{1}$. 
This proves that $S=1$, and the construction above
gives an isometry of the whole space
$\R^n \setminus \Omega$ with $\coint{0}{\frac{\pi}{2}} \times \de\Omega$. The proposition
is therefore established.
\end{proof}

\noindent These results are expected in view of the thesis we are aiming to,
according to which the metric $g$ is an hemisphere and the hypersurface
$\{f=0\}$ is its equator. In order to conclude, it is enough to show
that the metric on the base space $\de\Omega$ is isometric to a round
sphere (so far we have not even proved it is homeomorphic to a
sphere).

The last key clue lies in the fact that, as seen in~\eqref{eq:def-h}
and in the following discussion, if the end of $\R^n$ is compactified
by adding the point at infinity, then the metric $g$ can be
extended at the infinity and becomes isometric to $\gamma$, which is also
$\C^1$ at infinity. The warped structure of $g$ then converts to a set
of normal coordinates for $\gamma$ at the north pole, allowing us to
trigger the following lemma taken from~\cite[Claim 4 at page
133]{caochen3}.

\begin{lem}
  Let $(M^n, \gamma)$ be a complete Riemannian manifold and $N$ a point in
  its interior, such that the metric $\gamma$ is smooth on $M \setminus \{N\}$ and
  $\C^1$ at $N$; call $r \colon M \to \R_{\geq 0}$ the distance
  function from $N$.  Suppose there exists $\epsilon > 0$, a function
  $w \colon \coint{0}{\epsilon} \to \R$ and a closed Riemannian
  manifold $(\Sigma^{n-1}, g_\Sigma)$ such that the punctured ball
  $\set{x \in M | 0 < r(x) < \epsilon}$ is isometric to the manifold
  $(\ooint{0}{\epsilon} \times \Sigma, g)$, where
  \begin{equation}
    g(r, \theta) = dr \otimes dr + w^2(r) g_\Sigma(\theta) , \qquad r \in \ooint{0}{\epsilon}, \theta \in \Sigma ,
    \label{eq:warped-metric}
  \end{equation}
  and that the isometry sends geodesic spheres on $\Sigma$ slices.  Suppose
  moreover that $w(0) = 0$ and $w'(0) \neq 0$. Then $(\Sigma, g_\Sigma)$ is
  isometric to the standard round sphere $S^{n-1}$ with its canonical
  metric, up to a constant factor.
\end{lem}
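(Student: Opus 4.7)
The plan is to compare the warped-product description of $\gamma$ near $N$ with its description in normal coordinates at $N$, and to force, via the $\mathcal{C}^1$ regularity of $\gamma$ at $N$, the metric $g_\Sigma$ to be a constant multiple of the canonical round metric on $S^{n-1}$.

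The first step is to set up normal coordinates. Since $\gamma$ is $\mathcal{C}^1$ at $N$ and smooth elsewhere, the exponential map $\exp_N \colon T_N M \supset B_{\delta_0}(0) \to M$ provides, for some small $\delta_0 > 0$, a chart in which $\gamma_{ij}(0) = \delta_{ij}$ and the $\gamma_{ij}$ are continuous up to the origin. By hypothesis the distance function from $N$ is $r(x) = |x|$, so the geodesic sphere $\{r=\rho\}$ corresponds to the Euclidean sphere of radius $\rho$ in $T_N M$. Writing $\gamma$ in polar coordinates $(r,\theta) \in (0, \delta_0) \times S^{n-1}$ (where $S^{n-1}$ is the unit sphere of $(T_N M, g_N)$), we obtain
\[ \gamma = dr \otimes dr + h_r , \]
for a one-parameter family of Riemannian metrics $h_r$ on $S^{n-1}$. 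The continuity of $\gamma_{ij}$ at the origin of the chart then yields the uniform convergence
\[ \rho^{-2}\, h_\rho \,\longrightarrow\, g_{S^{n-1}} \qquad \text{as } \rho \to 0^{+}, \]
where $g_{S^{n-1}}$ is the canonical round metric induced by $g_N$ on $S^{n-1}$.

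The second step is to read off $h_\rho$ from the warped-product description. Following each radial unit-speed geodesic emanating from $N$ gives a canonical diffeomorphism $\psi \colon \Sigma \to S^{n-1}$ that matches the $\Sigma$-slice $\{r = \rho\}$ of the warped product with the Euclidean geodesic sphere of radius $\rho$ in $T_N M$ (this is where the assumption that the isometry sends geodesic spheres to $\Sigma$-slices enters in an essential way). Under $\psi$, the warped-product form gives
\[ h_\rho \,=\, w(\rho)^2\, \psi_\ast g_\Sigma . \]
Substituting this into the previous display and using $w(0)=0$ together with $w(\rho)/\rho \to w'(0) \neq 0$, passage to the limit $\rho \to 0^{+}$ yields
\[ \psi_\ast g_\Sigma \,=\, \frac{1}{w'(0)^2}\, g_{S^{n-1}} , \]
so $(\Sigma, g_\Sigma)$ is isometric via $\psi$ to the standard round sphere $S^{n-1}$ rescaled by the constant factor $1/|w'(0)|$, which is the desired conclusion.

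The main obstacle I anticipate is justifying the uniform convergence $\rho^{-2} h_\rho \to g_{S^{n-1}}$ with only $\mathcal{C}^1$ regularity of $\gamma$ at $N$. This reduces to the fact that in the $\exp_N$-chart the induced metric on the Euclidean sphere of radius $\rho$ is $\gamma_{ij}$ restricted tangentially, so it converges uniformly to the Euclidean sphere metric precisely because $\gamma_{ij}(x) \to \delta_{ij}$ as $x \to 0$; the orthogonality of radial and tangential directions, which underlies the form $\gamma = dr \otimes dr + h_r$, is here an \emph{input} from the warped-product hypothesis rather than something needing separate regularity assumptions. Once this convergence is in hand, the identification of $g_\Sigma$ is immediate from the comparison above.
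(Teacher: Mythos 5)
Your proof is correct and follows essentially the same route as the paper's: set up normal coordinates at $N$ using the $\mathcal{C}^1$ regularity, write the metric in polar coordinates around $N$, compare with the warped-product form, and pass to the limit $r \to 0$ to identify $g_\Sigma$ with a constant multiple of the round metric. The paper carries out the polar coordinate change more explicitly (writing $x^i = r\phi^i(\theta)$ and expanding), whereas you phrase the comparison via the convergence $\rho^{-2}h_\rho \to g_{S^{n-1}}$ and the identification $h_\rho = w(\rho)^2 \psi_\ast g_\Sigma$; these are the same computation in slightly different notation.
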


\noindent We mention the proof in~\cite{caochen3} for completeness.

\begin{proof}
  Since $\gamma$ is $\C^1$ at $N$, Christoffel coefficients are well
  defined and the regular machinery to have local existence of
  geodesics works. We can therefore consider normal coordinates $x^1$,
  \dots, $x^n$ around $N$; the metric can thus be written locally as
  \[ g(x) = \big[\delta_{ij} + \sigma_{ij}(x)\big] dx^i \otimes dx^j , \]
  with $\sigma_{ij} = o(\abs{x})$ as $\abs{x} \to 0$. This implies,
  in particular, that sufficiently small geodesic spheres are actually
  diffeomorphic to $S^{n-1}$, so $\Sigma$ is as well.

  In order to compare this with~\eqref{eq:warped-metric}, we switch to
  polar coordinates. Let $\theta^1$, \dots, $\theta^{n-1}$ be
  coordinates on $S^{n-1}$: then we can write:
  \[ x^i = r \phi^i(\theta^1, \dots, \theta^{n-1}) . \]
  Computation gives:
  \begin{align*}
    g(x) & = (\delta_{ij} + \sigma_{ij}) \cdot \left( \phi^i \phi^j \, dr \otimes dr + r \phi^j \frac{\de \phi^j}{\de \theta^\alpha} d \theta^\alpha \otimes dr + r \phi^i \frac{\de \phi^i}{\de \theta^\beta} dr \otimes d \theta^\beta + r^2 \frac{\de \phi^i}{\de \theta^\alpha} \frac{\de \phi^j}{\de \theta^\beta} d \theta^\alpha \otimes d \theta^\beta \right) \\
    & = (1 + \sigma_{ij} \phi^i \phi^j ) \cdot dr \otimes dr + r \sigma_{ij} \phi^j \frac{\de \phi^i}{\de \theta^\alpha} d \theta^\alpha \otimes dr + r \sigma_{ij} \phi^i \frac{\de \phi^j}{\de \theta^\beta} dr \otimes d \theta^\beta + \\
    & \qquad {} + \left( r^2 g_{\alpha\beta}^{S^{n-1}} + r^2 \sigma_{ij} \frac{\de \phi^i}{\de \theta^\alpha} \frac{\de \phi^j}{\de \theta^\beta} \right) d \theta^\alpha \otimes d \theta^\beta .
  \end{align*}
Comparing with~\eqref{eq:warped-metric} we have that
  \[ w^2(r) g_\Sigma(\theta) = r^2 g_{\alpha\beta}^{S^{n-1}} + r^2 \sigma_{ij}(r, \theta) \frac{\de \phi^i}{\de \theta^\alpha} \frac{\de \phi^j}{\de \theta^\beta} . \]
  Dividing by $r^2$ and passing to the limit $r \to 0$:
  \[ {\big[w'(0)\big]}^2 g_\Sigma(\theta) = g_{\alpha\beta}^{S^{n-1}} . \qedhere \]
\end{proof}

\noindent The proof of the rigidity statement is thus complete.

\appendix



\section{Well-known facts and asymptotic estimates}

\label{ap:asympt}

In this section we collect some general and widely known results about the regularity and the asymptotic behavior of solutions of system~\eqref{eq:system}. The first result justifies the assumptions made in Definition~\ref{ass:main} on the regularity of the electrostatic potential $u$.

\begin{pro}
	\label{pro:app1}
  Let $\Omega\subset\R^n$, $n \geq 3$, be a bounded open domain containing the origin, with smooth boundary and such that $\R^n\setminus\Omega$ is connected. Then there is a unique function $u$ that solves
  system~\eqref{eq:system}. Moreover, $u$ is $\C^\infty(\R^n \setminus \Omega)$, is analytic in $\R^n\setminus\overline{\Omega}$, and $u(\R^n \setminus \overline{\Omega}) = \ooint{0}{1}$.
\end{pro}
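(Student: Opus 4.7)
The plan is to treat each of the four claims (existence, uniqueness, smoothness/analyticity, and range $\ooint{0}{1}$) separately, in an order that reduces the exterior problem to well-known interior results.

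First I would dispose of uniqueness. If $u_1,u_2$ were two solutions, the difference $w=u_1-u_2$ would be harmonic on $\R^n\setminus\overline\Omega$, vanish on $\de\Omega$, and tend to $0$ at infinity. For any $\epsilon>0$ one can pick $R$ large enough so that $|w|<\epsilon$ on $\de B_R$ and, by the standard maximum principle applied on the bounded domain $B_R\setminus\overline\Omega$, conclude $|w|\leq\epsilon$ throughout this region. Letting $\epsilon\to 0$ gives $w\equiv 0$.

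For existence I would use the Kelvin transform $\Psi\colon x\mapsto x/|x|^2$ to convert the exterior problem into an interior one. Since $0\in\Omega$, the set $\widetilde\Omega := \Psi(\R^n\setminus\overline\Omega)\cup\{0\}$ is a bounded open subset of $\R^n$ containing the origin, with smooth boundary $\Psi(\de\Omega)$. A function $u$ on $\R^n\setminus\overline\Omega$ is harmonic if and only if its Kelvin transform $\tilde u(y) := |y|^{2-n}u(y/|y|^2)$ is harmonic on $\widetilde\Omega\setminus\{0\}$, while the decay $u(x)\to 0$ as $|x|\to\infty$ corresponds (for solutions bounded in a neighborhood of $\infty$) to $\tilde u$ being bounded near $0$, hence extending harmonically across it with $\tilde u(0)=0$. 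The boundary condition $u=1$ on $\de\Omega$ becomes the smooth Dirichlet datum $\tilde u(y)=|y|^{2-n}$ on $\de\widetilde\Omega$. Classical solvability of the Dirichlet problem on a bounded smooth domain (Perron's method, or layer potentials) produces a unique $\tilde u\in\C^0(\overline{\widetilde\Omega})$ harmonic inside, and one then inverts the Kelvin transform to obtain $u$.

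For regularity, analyticity of $u$ in the open set $\R^n\setminus\overline\Omega$ is the well-known fact that every harmonic function is real-analytic on its domain (it is, say, locally represented by its Poisson integral over balls). Smoothness up to $\de\Omega$ follows from standard boundary regularity for the Dirichlet problem with smooth boundary and smooth data: since $\de\Omega$ is $\C^\infty$ and the datum $u\equiv 1$ is $\C^\infty$, Schauder-type estimates iterated to arbitrary order give $u\in\C^\infty(\R^n\setminus\Omega)$ (equivalently, one transfers the same statement through the Kelvin transform, which is a diffeomorphism on the relevant regions). Finally, the range statement is a consequence of the strong maximum principle: the minimum of $u$ on $(\R^n\setminus\overline\Omega)\cup\{\infty\}$ can only be attained on the boundary, so $u\geq 0$, and strict positivity holds in $\R^n\setminus\overline\Omega$ since the only boundary points where $u=0$ lie at infinity; the same argument applied to $1-u$ gives $u<1$ in the interior. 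The only mildly delicate point is the existence step and in particular checking that the Kelvin-transformed boundary datum extends smoothly through the origin so that the compactified problem genuinely falls within the classical theory; this is immediate once one notes that $0$ is an interior point of $\widetilde\Omega$ and the boundary of $\widetilde\Omega$ stays bounded away from it.
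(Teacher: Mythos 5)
Your proof is correct in its essential ideas, but it takes a genuinely different route from the paper's. The paper works directly on the exterior domain: it cites Helms for the Perron solution (infimum of superharmonic barriers equals supremum of subharmonic ones) and for the maximum/minimum principle extended to unbounded domains with the point at infinity adjoined, then gets $\C^\infty$ regularity up to $\de\Omega$ by observing that difference quotients of $u$ are again harmonic and applying interior Schauder estimates, and analyticity by estimating derivatives through the mean-value property. You instead compactify via the Kelvin transform $\Psi(x)=x/|x|^2$, reduce to a classical interior Dirichlet problem on the bounded smooth domain $\widetilde\Omega$, and then invert; uniqueness is handled separately by the elementary comparison on $B_R\setminus\overline\Omega$ with $R\to\infty$. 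Both arguments are standard; yours has the advantage of landing squarely in the textbook theory of interior Dirichlet problems, whereas the paper's is more intrinsic to the exterior setting (and, as Remark~\ref{rem:PP} suggests they care about, more likely to transplant to non-Euclidean ambient spaces where no global inversion is available).

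One factual slip is worth correcting, even though it does not damage the construction. You assert that the removable singularity of $\tilde u$ at the origin forces $\tilde u(0)=0$. This is false. If $u(x)=c\,|x|^{2-n}(1+o(1))$ at infinity — and indeed Proposition~\ref{pro:est-u} shows $c=\Ca(\Omega)>0$ — then
\begin{equation*}
\tilde u(y)=|y|^{2-n}\,u\!\left(\frac{y}{|y|^2}\right)\longrightarrow c \qquad\text{as } y\to 0,
\end{equation*}
so $\tilde u(0)=\Ca(\Omega)>0$. In the model case $\Omega=B_1$ one has $u(x)=|x|^{2-n}$ and $\tilde u\equiv 1$. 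For the existence direction you only need $\tilde u(0)$ to be finite (so that $u(x)=|x|^{2-n}\tilde u(x/|x|^2)\to 0$), not zero, so the construction is unaffected; but the stated correspondence is wrong as written. Relatedly, to go from ``$u\to 0$ at infinity'' to ``$\tilde u$ extends across $0$'' you should invoke the removable singularity criterion $\tilde u(y)=o(|y|^{2-n})$ rather than asserting boundedness directly, since boundedness is a consequence of the extension, not the hypothesis you start from.
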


\begin{proof}
  By~\cite[Lemma 6.3.15]{helms} the infimum of the superharmonic
  functions on $\R^n \setminus \overline{\Omega}$ that take values not
  smaller than $1$ on $\de\Omega$ and not smaller than $0$ at infinity
  coincides with the supremum of subharmonic functions on the same
  domain that take values not larger than $1$ on $\de\Omega$ and not
  larger than $0$ at infinity. This common value must be a solution
  of~\eqref{eq:system} and it is at least continuous.

  By~\cite[Corollary 2.5.10]{helms}, $u$ satisfies the (strong)
  maximum and minimum principles, where the definition of the boundary
  of the domain must be extended to include the point at
  infinity. This implies that the solution is unique (because the
  difference of two solutions must be zero) and proves the given
  range.

  Higher regularity is now a completely local
  fact. Using~\cite[Theorem 6.14]{gilbarg-trudinger} we prove that
  $u \in \C^{2,\alpha}(\R^n \setminus \Omega)$. Let us now consider a
  partial derivative of $u$
  \[ v = \frac{\de u}{\de x_i} = \lim_{h\to 0} \frac{u(x+he_i)-u(x)}{h} . \]
  By linearity the difference quotient on the right is harmonic in
  $x$, so by~\cite[Theorem 2.8]{gilbarg-trudinger} also $v$ is. So we
  also have $v \in \C^{2,\alpha}(\R^n \setminus \Omega)$. The same
  procedure can be repeated for all derivatives of any order, so
  smoothness of $u$ is proved.
  
  The analyticity of $u$ can be recovered by proving that the Taylor series of $u$ converges at every point of $\R^n\setminus\overline{\Omega}$. This can be done using the mean-value property of harmonic functions in order to find appropriate bounds for the derivatives of $u$. We do not give the details, that can be found, for instance, in~\cite[Theorem~10 of Chapter~2]{evans-pde}.
\end{proof}

\noindent Now we pass to prove the asymptotic estimates widely used for $u$ and $f$. The
asymptotic estimates for $u$ are well-known, but their complete proof is rarely explictly cited: here we fill this gap.
\begin{pro}
  \label{pro:est-u}
Let $(\Omega,u)$ be a regular solution to problem~\eqref{eq:system} in the sense of Definition~\ref{ass:main}, and let $\Ca(\Omega)$ be defined by~\eqref{eq:capacity}. Then the following
  estimates hold for $\abs{x} \to \infty$:
  \begin{align*}
    u &= \Ca(\Omega)\abs{x}^{2-n} \cdot (1+o(1)) \\
    D_i u &= -(n-2)\Ca(\Omega)\abs{x}^{-n}x_i \cdot (1+o(1)) \\
    D^2_{ij}u &= (n-2)\Ca(\Omega)\abs{x}^{-2-n}\big(n x_i x_j-\abs{x}^{2}g_{ij}^{\R^n}\big) \cdot (1+o(1)) .
  \end{align*}
\end{pro}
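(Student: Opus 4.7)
The plan is to use the Kelvin transform to convert the behavior of $u$ at infinity into the behavior of a harmonic function near an isolated singularity at the origin, and then invoke the classical classification of such singularities in dimension $n\geq 3$. Since $\Omega$ is bounded, there exists $R>0$ with $\overline{\Omega}\subset B_R$, and the inversion $y\mapsto y/|y|^2$ maps the punctured ball $B_{1/R}\setminus\{0\}$ bijectively onto $\R^n\setminus \overline{B_R}$. Set
\[
v(y) \,=\, |y|^{2-n}\,u(y/|y|^2),\qquad y\in B_{1/R}\setminus\{0\},
\]
so that $v$ is harmonic on $B_{1/R}\setminus\{0\}$ by the conformal invariance of the Kelvin transform. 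Since $0\le u\le 1$, we have $|v(y)|\le |y|^{2-n}$, and expanding $v$ in spherical harmonics on the punctured ball the growth bound forces every term of the form $|y|^{2-n-\ell}Y_\ell$ with $\ell\geq 1$ to vanish, yielding the classical splitting
\[
v(y)\,=\,c\,|y|^{2-n}+w(y),
\]
with $c\in\R$ and $w$ harmonic (hence analytic) throughout $B_{1/R}$.

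Translating back via $x=y/|y|^2$, this identity reads $u(x)=c+|x|^{2-n}\,w(x/|x|^2)$ on $\R^n\setminus\overline{B_R}$. Since $w$ is continuous at $0$ and $u(x)\to 0$ as $|x|\to\infty$, we must have $c=0$, hence $u(x)=|x|^{2-n}w(x/|x|^2)$, which already yields the first asymptotic $u(x)=w(0)|x|^{2-n}(1+o(1))$. To identify $w(0)$ with $\Ca(\Omega)$, I would apply the divergence theorem to the harmonic function $u$ on the annulus $B_\rho\setminus\overline{\Omega}$ with $\rho>R$, noting that the outer normal to this annulus on $\de\Omega$ points into $\Omega$ so that $\de_\nu u=|Du|$ there. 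A chain-rule computation on the explicit formula for $u$ gives $\de_r u(x)=-(n-2)|x|^{1-n}w(x/|x|^2)+O(|x|^{-n})$, so that $\int_{\de B_\rho}\de_r u\,d\sigma\to -(n-2)|S^{n-1}|w(0)$ as $\rho\to\infty$. Combined with the divergence identity $0=\int_{\de\Omega}|Du|\,d\sigma+\int_{\de B_\rho}\de_r u\,d\sigma$ and the definition~\eqref{eq:capacity} of $\Ca(\Omega)$, this gives $w(0)=\Ca(\Omega)$.

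Finally, the expansions for $Du$ and $D^2u$ follow by differentiating the explicit expression $u(x)=|x|^{2-n}w(x/|x|^2)$ once and twice, using the Taylor expansion of the analytic function $w$ at the origin: $w(y)=w(0)+O(|y|)$ and $Dw(y)=Dw(0)+O(|y|)$. In each case the dominant contribution comes from differentiating the factor $|x|^{2-n}$ while $w(x/|x|^2)$ is replaced by its limit value $\Ca(\Omega)$, producing exactly the leading terms in the statement; the other contributions, produced when the chain rule differentiates $w(x/|x|^2)$, carry an additional factor of $|x|^{-2}$ or smaller and are therefore absorbed in the $(1+o(1))$ corrections. The main subtlety of the plan lies in the classification of the singularity of $v$, which crucially requires $n\ge 3$; once that step is in place, everything else reduces to routine chain-rule computations and a single application of the divergence theorem.
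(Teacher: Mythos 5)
Your argument is correct, and it takes a genuinely different route from the paper's. The paper extends $u$ by the constant $1$ inside $\Omega$, invokes the Riesz decomposition theorem to write $u(x)=\int|x-y|^{2-n}\,d\mu(y)$ for a nonnegative measure $\mu$ supported on $\partial\Omega$, identifies $\mu(\R^n)=\Ca(\Omega)$ by a smoothing approximation of $u$ together with a careful passage to the limit across the interface, and then obtains the derivative estimates by differentiating under the integral sign. Your proof instead uses the Kelvin transform to pull the point at infinity to the origin, invokes the classical Bôcher-type classification of isolated singularities (here in the weak form: a harmonic function on a punctured ball bounded by $C|y|^{2-n}$ splits as $c|y|^{2-n}+w$ with $w$ harmonic across the origin), and then differentiates the resulting closed-form expression $u(x)=|x|^{2-n}w(x/|x|^2)$. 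The two routes are comparably self-contained: the paper's is the natural language for potential theory and scales easily to less regular $\Omega$ or more general elliptic operators in divergence form, while yours is more elementary (no measure theory, no approximating smooth sequence) and gives an explicit analytic structure at infinity essentially for free; but it leans specifically on the conformal covariance of the Laplacian and the dimension condition $n\ge 3$, so it is somewhat less flexible. Your identification of the constant via the divergence theorem on $B_\rho\setminus\overline{\Omega}$ is also simpler than the paper's approximation argument, and the sign and orientation conventions you use on $\partial\Omega$ are handled correctly.
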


\begin{rem}
  By the uniqueness of solutions, $\Omega$ is a round ball if and only
  if the errors in the three formul\ae\ vanish.
\end{rem}

\begin{proof}
  Suppose that $u$ is extended to assume the value $1$ inside
  $\Omega$, therefore being defined on the whole $\R^n$. The resulting
  function, called again $u$, is Lipschitz continuous and superharmonic (according to~\cite[Definition 3.3.3]{helms}), because it is harmonic on
  $\R^n \setminus \overline{\Omega}$ and all the remaining points are maximum
  points. We want to use the Riesz decomposition theorem stated in~\cite[Theorem~4.5.11]{helms}: $\R^n$ is Greenian by~\cite[Theorem~4.2.11]{helms} and its Green function is
  $\abs{x-y}^{2-n}$ (compare with~\cite[Definition~4.2.3 and the end
  of Section~2.3]{helms}). It follows that there are a measure $\mu$
  supported on a compact set $K \subset \R^n$ and an harmonic function
  $h \colon \R^n \to \R$ such that
  \[ u(x) - h(x) = U(x) := \int_{\R^n} \abs{x-y}^{2-n} \d \mu(y) \qquad \forall x \in \R^n . \]
  By tracking down the construction of the measure $\mu$, in particular looking at the proof of~\cite[Lemma~4.5.9]{helms}, we can see that, in the distributional sense,
  \begin{equation}
    \label{eq:mu}
    \mu = -\frac{1}{(n-2)\abs{S^{n-1}}} \Delta u \geq 0 .
  \end{equation}

  Since both $u$ and $U$ go to zero at infinity, $h$ does the same, so
  by harmonicity it must be identically zero. Thus:
  \begin{align}
    u(x) & = \int_{\R^n} \abs{x-y}^{2-n} \d \mu(y) \nonumber \\
         & = \abs{x}^{2-n} \cdot \mu(\R^n) + \int_{\R^n} \left( \abs{x-y}^{2-n} - \abs{x}^{2-n} \right) \d \mu(y) . \label{eq:u-repr}
  \end{align}
  From~\eqref{eq:mu} we know that $\mu$ is supported on $\de\Omega$,
  therefore $y$ ranges on a compact set. This implies that
  $\abs{x-y}^{2-n} - \abs{x}^{2-n} = o(\abs{x}^{2-n})$ for
  $\abs{x} \to \infty$. In order to prove the estimate for $u$ we then
  just need to show that $\mu(\R^n) = \Ca(\Omega)$. We can choose a
  sequence of smooth functions $u_j$ converging to $u$ in the Lipschitz norm and
  such that $u$ and $u_j$ coincide, when $j > \epsilon^{-1}$, out of the set
  \[ \Omega_\epsilon := \Set{x \in \R^n | \exists y \in \de \Omega
      \text{ s.t. } \abs{x-y} < \epsilon } . \]
  We also define the (possibly signed) measures $\mu_j$ having density
  $-\frac{1}{(n-2)\abs{S^{n-1}}} \Delta u_j$ with respect to the
  Lebesgue measure; by~\eqref{eq:mu} it holds $\mu_j \to \mu$ as
  $j \to \infty$ in the distributional sense.

  For sufficiently small
  values of $\epsilon$, the boundary of $\Omega_\epsilon$ is smooth and it
  does not intersect $\de\Omega$. Therefore we decompose it in an
  internal and an external part:
  \[ \de \Omega_\epsilon = \de^+ \Omega_\epsilon \sqcup \de^- \Omega_\epsilon := (\de \Omega_\epsilon \setminus \Omega) \sqcup (\de \Omega_\epsilon \cap \Omega) . \]
  By construction $u$, and therefore $u_j$, are harmonic out of
  $\Omega_\epsilon$. Thus, calling $\nu$ the exterior unit-normal to $\de\Omega_\epsilon$, we have
  \[
    (n-2) \abs{S^{n-1}} \cdot \mu_j(\R^n) = -\int_{\R^n} \Delta u_j \d v
    = \int_{\Omega_\epsilon} \Delta u_j \d v = \int_{\de^+ \Omega_\epsilon} \scal{D u_j}{\nu} \d \sigma + \cancel{\int_{\de^- \Omega_\epsilon} \scal{D u_j}{\nu} \d \sigma} ,
  \]
  where the cancelled term is zero because $u_j$ coincides with $u$
  (which is constant) on the integration domain.
  For $j \to +\infty$, the left hand side tends to
  $(n-2)\abs{S^{n-1}}\cdot\mu(\R^n)$, while the right hand side tends to
  $\int_{\de^+ \Omega_\epsilon} \scal{D u}{\nu} \d \sigma$. For
  $\epsilon \to 0$, this integral goes to
  \[ (n-2) \abs{S^{n-1}} \cdot \mu(\R^n) = \int_{\de\Omega} \scal{Du}{\nu} \d \sigma = \int_{\de\Omega} \abs{Du} \d \sigma = (n-2) \abs{S^{n-1}} \cdot \Ca(\Omega) , \]
  using~\eqref{eq:capacity} and the fact that $\nu=D u/\abs{D u}$ on $\de\Omega$. The result for $u$ is thus established.

  Estimates for $Du$ and $D^2 u$ follow using the same pattern,
  differentiating under the integral sign in~\eqref{eq:u-repr}. A
  proof for exchanging derivation and integration over a general
  measure is~\cite[Lemma 2.5.5]{helms}, whose hypotheses are easy to
  verify.
\end{proof}

\noindent As a first consequence of Proposition~\ref{pro:est-u}, we have that, for any $\varepsilon>0$ sufficiently small, the set $\set{u=\varepsilon}$ is regular and diffeomorphic to a sphere. Moreover, one easily computes
$$
\frac{D_i u}{\abs{D u}}\,=\,-\frac{x_i}{\abs{x}}\cdot \big(1+o(1)\big)\,.
$$
Therefore, the level sets of $u$ tend to coincide with the level sets of $|x|$ as $|x|\to +\infty$. In particular, for small $\epsilon$'s, the level set $\{u=\epsilon\}$ can be radially projected to 
$$
S_\epsilon=\Big\{\abs{x}=\left[\frac{\Ca(\Omega)}{\epsilon}\right]^{\frac{1}{n-2}}\Big\}\,,
$$ 
and the push-forward of the measure $\sigma$ of $\{u=\epsilon\}$ on $S_\epsilon$ tends to coincide with the spherical measure as $\epsilon$ goes to $0$. From this we deduce
\begin{equation}
\label{eq:est-levels}
\int_{\{u=\varepsilon\}}\!\!d\sigma= \left[\frac{\Ca(\Omega)}{\epsilon}\right]^{\frac{n-1}{n-2}}\abs{S^{n-1}}+o(\epsilon^{-\frac{n-1}{n-2}})
\end{equation}
as $\epsilon\to 0^+$.


Finally, we discuss some alternative definitions for the capacity of a set $\Omega$ and we prove their equivalence.

\begin{pro}
	Let $(\Omega,u)$ be a regular solution to problem~\eqref{eq:system} in the sense of Definition~\ref{ass:main}, and let $\Ca(\Omega)$ be the capacity of $\Omega$, defined by~\eqref{eq:def-omega}. Then the following two formul\ae\ hold
	\begin{align}
	\label{eq:capacity_alter_1}
	\Ca(\Omega) \,&=\,  \frac{1}{(n-2) \abs{S^{n-1}}} \int_{\de\Omega} \abs{Du} \d \sigma \, ,
	\\
	\label{eq:capacity_alter_2}
	\Ca(\Omega) \,&=\,\frac{1}{|S^{n-1}|}\int_{\de\Omega} \abs*{\frac{Du}{n-2}}^2\scal{x}{\nu} \d \sigma\,,
	\end{align}
where $x$ is the position vector in $\R^n$.
\end{pro}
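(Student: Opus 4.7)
Both identities will follow from judicious applications of the divergence theorem on the annular region
$$\Omega_\epsilon \,:=\, \{u > \epsilon\}\setminus\overline{\Omega},$$
followed by the passage to the limit $\epsilon \to 0^+$, during which the outer component $\{u=\epsilon\}$ of $\de\Omega_\epsilon$ recedes to spatial infinity. The orientations to keep in mind are: on $\de\Omega$ the function $u$ attains its maximum value $1$ on $\R^n \setminus \overline{\Omega}$, so $Du = -\abs{Du}\nu$ with $\nu$ the outward unit normal to $\Omega$; on $\{u=\epsilon\}$ the gradient $Du$ is antiparallel to the outward normal of $\Omega_\epsilon$. All control at infinity will come from the leading-order asymptotics of Proposition~\ref{pro:est-u} combined with the area estimate~\eqref{eq:est-levels}.

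For~\eqref{eq:capacity_alter_1}, which is in fact already implicit in the proof of Proposition~\ref{pro:est-u}, I would integrate $\Delta u = 0$ on $\Omega_\epsilon$ and invoke the divergence theorem to obtain
$$\int_{\de\Omega}\abs{Du}\d\sigma \,=\, \int_{\{u=\epsilon\}}\abs{Du}\d\sigma \,.$$
Inserting $\abs{Du} = (n-2)\Ca(\Omega)\abs{x}^{1-n}(1+o(1))$ and the level-set area estimate, the right-hand side converges to $(n-2)\abs{S^{n-1}}\Ca(\Omega)$, which is the desired identity.

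For the Pohozaev-type identity~\eqref{eq:capacity_alter_2}, I would introduce the vector field
$$V \,:=\, \abs{Du}^2\,x \,-\, 2\scal{x}{Du}\,Du \,,$$
and verify by direct computation that $\Div V = (n-2)\abs{Du}^2$: the $\Delta u$ term drops by harmonicity, while the two Hessian cross-terms cancel by symmetry of $D^2 u$. The divergence theorem on $\Omega_\epsilon$ then produces three terms. On $\de\Omega$, substituting $Du = -\abs{Du}\nu$ collapses $V$ so that the inner boundary contribution is precisely $\int_{\de\Omega}\abs{Du}^2\scal{x}{\nu}\d\sigma$. On $\{u=\epsilon\}$ the integrand reduces to $\abs{Du}\scal{x}{Du}$, which by Proposition~\ref{pro:est-u} is of order $\abs{x}^{3-2n}$; integrated against a surface of area $\sim\abs{x}^{n-1}$ this yields a contribution of order $\abs{x}^{2-n} \sim \epsilon$, vanishing in the limit. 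The interior integral becomes $(n-2)\int_{\R^n\setminus\overline{\Omega}}\abs{Du}^2\,dv$; to evaluate this total Dirichlet energy I would apply the divergence theorem once more to $\Div(uDu) = \abs{Du}^2$ (again using that the outer contribution is $\epsilon\int_{\{u=\epsilon\}}\abs{Du}\d\sigma \to 0$), which together with~\eqref{eq:capacity_alter_1} gives $\int_{\R^n\setminus\overline{\Omega}}\abs{Du}^2\,dv = (n-2)\abs{S^{n-1}}\Ca(\Omega)$. Dividing through by $(n-2)^2\abs{S^{n-1}}$ yields~\eqref{eq:capacity_alter_2}.

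The only genuine obstacle is showing that the outer boundary integrals of $V$ and of $uDu$ on $\{u=\epsilon\}$ vanish as $\epsilon \to 0^+$; this is a bookkeeping exercise using Proposition~\ref{pro:est-u}, but it is the one point where the sharp asymptotics are essential rather than merely convenient. Everything else is algebra together with the two elementary divergence identities $\Div(uDu) = \abs{Du}^2$ and $\Div V = (n-2)\abs{Du}^2$.
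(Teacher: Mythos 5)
Your derivation of~\eqref{eq:capacity_alter_2} is essentially the paper's argument. The paper packages the Pohozaev step through the divergence-free stress-energy tensor $T = du\otimes du - \tfrac{1}{2}\abs{Du}^2 g_{\R^n}$, integrating $\scal{\Div T}{X}=0$ against the position vector $X$; your vector field $V = \abs{Du}^2\,x - 2\scal{x}{Du}\,Du$ is exactly $-2\,T(x,\cdot)^\sharp$, so $\Div V = -2\scal{T}{Dx} = (n-2)\abs{Du}^2$ is the same computation. You are somewhat more careful than the paper in exhibiting the vanishing of the outer boundary term as $\epsilon\to 0$, and the final reduction to $\int_{\R^n\setminus\overline\Omega}\abs{Du}^2\,dv$ via $\Div(uDu)=\abs{Du}^2$ matches the paper's use of~\eqref{eq:capacity_alter_3}.

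For~\eqref{eq:capacity_alter_1}, however, there is a circularity you have not noticed. Your argument takes $\Delta u = 0$, integrates over $\Omega_\epsilon$ to get flux conservation across level sets, and then evaluates the flux through $\{u=\epsilon\}$ using the asymptotics of Proposition~\ref{pro:est-u}, which state $\abs{Du} = (n-2)\Ca(\Omega)\abs{x}^{1-n}(1+o(1))$. But the identification of the asymptotic constant in that expansion with $\Ca(\Omega)$ is precisely what~\eqref{eq:capacity_alter_1} asserts: in the proof of Proposition~\ref{pro:est-u}, the constant appearing in the Riesz representation is $\mu(\R^n)$, which is there shown to equal the flux $\tfrac{1}{(n-2)\abs{S^{n-1}}}\int_{\de\Omega}\abs{Du}\,d\sigma$, and the step ``$=\Ca(\Omega)$'' invokes formula~\eqref{eq:capacity}---i.e.\ the very identity you are trying to prove. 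Strip that away, and your argument only re-derives that the flux through $\{u=\epsilon\}$ tends to the flux through $\de\Omega$, a tautology. The genuine content of~\eqref{eq:capacity_alter_1} is the link between the variational capacity~\eqref{eq:def-omega} and the solution of the boundary value problem, and this is what the paper's proof supplies: it checks that $\mathcal{F}$ is strictly convex, that $u$ is a critical point (via $\Delta u = 0$), hence $u$ is the unique minimizer, giving $\Ca(\Omega) = \tfrac{1}{(n-2)\abs{S^{n-1}}}\int_{\R^n\setminus\Omega}\abs{Du}^2\,dv$; integrating by parts then yields~\eqref{eq:capacity_alter_1}. Without some version of this variational step your proof never touches the definition~\eqref{eq:def-omega}.
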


\begin{proof}
Let $\Omega$ be our bounded domain, and consider the functional
$$
\mathcal{F}: \left\{w\in\mathcal{C}_c^{\infty}(\R^n),\ w\equiv 1\text{ in }\Omega\right\}\longrightarrow \R\,,
\qquad\mathcal{F}: w\,\longmapsto\,\frac{1}{(n-2)|S^{n-1}|}\int_{\R^n}\abs{D w}^2\d v\,.
$$
First of all, we show that the functional $\mathcal{F}$ is strictly convex. An easy computation shows that, for every $w_1,w_2\in\mathcal{C}_c^{\infty}(\R^n)$ with $w_1\equiv w_2\equiv 1$ on $\Omega$, 
\begin{align}
\notag
\mathcal{F}\left(\frac{w_1+w_2}{2}\right)
\,&=\,
\frac{1}{4}\mathcal{F}(w_1)+\frac{1}{4}\mathcal{F}(w_2)+\frac{1}{2(n-2)|S^{n-1}|}\int_{\R^n}\scal{Dw_1}{Dw_2}\d v
\\
\notag
\,&=\,\frac{\mathcal{F}(w_1)+\mathcal{F}(w_2)}{2}-\frac{1}{4(n-2)|S^{n-1}|}\int_{\R^n}\left(\abs{Dw_1}^2+\abs{Dw_2}^2-2\scal{Dw_1}{Dw_2}\right)\d v
\\
\label{eq:F_conv}
&\leq\,\frac{\mathcal{F}(w_1)+\mathcal{F}(w_2)}{2}\,,
\end{align}
where in the last step we have used the Cauchy-Schwartz inequality. Moreover, if the equality holds in~\eqref{eq:F_conv}, then $w_1=w_2+c$, with $c$ constant. But $w_1=w_2$ on $\Omega$, hence $w_1=w_2$ everywhere. This proves that $\mathcal{F}$ is strictly convex.

Now let $u$ be the solution of system~\eqref{eq:system}. We want to prove that $u$ is the minimum of $\mathcal{F}$. To this end, since $\mathcal{F}$ is strictly convex, it is enough to prove that $u$ is a critical point of $\mathcal{F}$, that is, we want to show
\begin{equation*}
\lim_{\epsilon\to 0^+}\frac{\mathcal{F}(u+\epsilon w)-\mathcal{F}(u)}{\epsilon}\,=\,0\,,
\end{equation*}
for every $w\in\mathcal{C}_c^{\infty}(\R^n\setminus\overline{\Omega})$. This is an easy computation
\begin{align}
\notag
\lim_{\epsilon\to 0^+}\frac{\mathcal{F}(u+\epsilon w)-\mathcal{F}(u)}{\epsilon}
\,&=\,
\lim_{\epsilon\to 0^+}\frac{1}{(n-2)|S^{n-1}|}\int_{\R^n}\left(2\scal{D w}{D u}+\epsilon\abs{Dw}^2\right)\d v
\\
\notag
&=\,\frac{2}{(n-2)|S^{n-1}|}\int_{\R^n\setminus\Omega}\scal{D w}{D u}\d v
\\
\label{eq:ucrit}
&=\,\frac{2}{(n-2)|S^{n-1}|}\left(-\int_{\R^n\setminus\Omega}w\Delta u\d v+\int_{\de\Omega}u\scal{Dw}{\nu}\right)\,,
\end{align}
where $\nu=-Du/\abs{Du}$ is the unit normal to $\de\Omega$ and in the latter equality we have integrated by parts. Since $u$ is harmonic and $w\in\C_c^{\infty}(\R^n\setminus\overline{\Omega})$, the limit in~\eqref{eq:ucrit} is zero. Therefore, the function $u$ is indeed the minimum of $\mathcal{F}$, and, from~\eqref{eq:def-omega} we obtain 
\begin{equation}
\label{eq:capacity_alter_3}
\Ca(\Omega)\,=\,\frac{1}{(n-2)|S^{n-1}|}\int_{\Omega}\abs{D u}^2\d\sigma\,.
\end{equation}
Identity~\eqref{eq:capacity_alter_1} now follows immediately integrating by parts and using the harmonicity of $u$.

In order to prove~\eqref{eq:capacity_alter_2}, we consider the tensor
$$
T\,=\,du\otimes du-\frac{|D u|^2}{2}g_{\R^n}\,.
$$
An easy computation shows that $T$ has zero divergence
$$
\Div T\,=\,\cancel{\Delta u Du}+\frac{1}{2}D\abs{Du}^2-\frac{1}{2}D\abs{Du}\,=\,0\,.
$$
Integrating by parts, for all vector fields $X$ on $\R^n\setminus\Omega$ we obtain
\begin{equation}
\label{eq:poho}
0\,=\,\int_{\R^n\setminus\Omega}\scal{\Div T}{X}\d v\,=\,-\int_{\de\Omega}T(X,\nu)\d\sigma-\int_{\R^n\setminus\Omega}\scal{T}{DX}\d v\,,
\end{equation}
where $\nu=-Du/|D u|$, as usual. If we choose $X$ as the position vector, that is $X_{|_{x}}=x$, we obtain
\begin{align*}
\scal{T}{Dx}\,&=\,T_{ij}\delta^{ij}\,=\,
\abs{Du}^2-\frac{\abs{Du}^2}{2}\cdot n
\,=\,
-\frac{n-2}{2}\,\abs{Du}^2\,,
\\
T(x,\nu)\,&=\,-T\left(x,\frac{Du}{\abs{Du}}\right)\,=\,-\scal{Du}{x}\cdot\Scal{Du}{\frac{Du}{\abs{Du}}}+\frac{\abs{Du}^2}{2}\Scal{x}{\frac{Du}{\abs{Du}}}\,=\,\frac{\abs{Du}^2}{2}\scal{x}{\nu}\,.
\end{align*}
Substituting in~\eqref{eq:poho}, we find
$$
(n-2)\int_{\R^n\setminus\Omega}\abs{Du}^2\d v\,=\,\int_{\de\Omega}\abs{Du}^2\scal{x}{\nu}\d\sigma\,,
$$
and using~\eqref{eq:capacity_alter_3}, we obtain~\eqref{eq:capacity_alter_2}.
\end{proof}

\noindent Now that the different definitions of capacity have been discussed, we pass to the proof of a well known isoperimetric inequality, referred to as the Poincar\'{e}-Faber-Szeg\"o inequality. This inequality has been proved in~\cite[Section~1.12]{Pol_Sze} for the $3$-dimensional case, while the proof in the $n$-dimensional case has been summarized in~\cite{Jauregui}.

\begin{theo}
\label{theo:PFS}
Let $\Omega\subset \R^n$ bounded domain with smooth boundary $\partial\Omega$. Then it holds
\begin{equation}
\label{eq:PFS_app}
\left(\frac{|\Omega|}{|B^n|}\right)^{\!\frac{n-2}{n}}\!\!\leq\,{\rm Cap}(\Omega)\,.
\end{equation}
Moreover, if the equality holds, then $\Omega$ is a ball.
\end{theo}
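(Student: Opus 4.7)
The plan is to establish \eqref{eq:PFS_app} via Schwarz symmetrization, exploiting the variational characterization of capacity in \eqref{eq:def-omega} (equivalently the Dirichlet-integral representation proven in \eqref{eq:capacity_alter_3}) together with the classical Pólya--Szegő principle. The upshot is that among all domains of a fixed volume, the ball minimizes capacity.

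First I would extend the capacitary potential $u$ of $\Omega$ to all of $\R^n$ by setting $u \equiv 1$ on $\overline{\Omega}$, obtaining a Lipschitz function $u \colon \R^n \to \ccint{0}{1}$. By the strong maximum principle, $\{u=1\}=\overline{\Omega}$ up to a negligible set, so $\abs{\{u\geq t\}}$ is finite for every $t>0$. Denote by $\Omega^*$ the open ball centered at the origin with $\abs{\Omega^*}=\abs{\Omega}$, so that its radius is $r=(\abs{\Omega}/\abs{B^n})^{1/n}$, and let $u^* \colon \R^n \to \ccint{0}{1}$ be the Schwarz symmetrization of $u$, characterized by the fact that $\{u^* \geq t\}$ is the open ball centered at the origin of volume $\abs{\{u\geq t\}}$ for every $t \in \ccint{0}{1}$. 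By construction $u^* \equiv 1$ on $\Omega^*$, $u^*\to 0$ at infinity, and $u^*$ is Lipschitz.

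Next I would invoke the Pólya--Szegő inequality
\[
\int_{\R^n} \abs{Du^*}^2 \d v \,\leq\, \int_{\R^n} \abs{Du}^2 \d v ,
\]
whose proof proceeds through the coarea formula applied to both $u$ and $u^*$, combined with the Euclidean isoperimetric inequality $\abs{\de\{u>t\}} \geq \abs{\de\{u^*>t\}}$ on regular level sets (true $\mathcal{H}^{n-1}$-a.e.\ by Sard's lemma) and the Cauchy--Schwarz-type estimate $\left(\int_{\{u=t\}} \d\sigma\right)^2 \leq \left(\int_{\{u=t\}} \abs{Du}\d\sigma\right)\left(\int_{\{u=t\}} \abs{Du}^{-1}\d\sigma\right)$. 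Since $u^*$ can be approximated by admissible test functions in the definition \eqref{eq:def-omega} of $\Ca(\Omega^*)$ (by mollification and truncation outside large balls, using $u^*\to 0$ at infinity), and since the capacitary potential attains the infimum, we obtain
\[
(n-2)\abs{S^{n-1}}\Ca(\Omega^*) \,\leq\, \int_{\R^n}\abs{Du^*}^2 \d v \,\leq\, \int_{\R^n} \abs{Du}^2 \d v \,=\, (n-2)\abs{S^{n-1}}\Ca(\Omega).
\]
Finally, because $\Omega^*$ is a ball of radius $r$, the explicit capacitary potential $u_0(x) = r^{n-2}\abs{x}^{2-n}$ on $\R^n\setminus\overline{\Omega^*}$ gives $\Ca(\Omega^*)=r^{n-2}=(\abs{\Omega}/\abs{B^n})^{(n-2)/n}$, which yields \eqref{eq:PFS_app}.

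For the rigidity, equality forces equality in Pólya--Szegő at (almost) every level, which in turn forces equality in the isoperimetric inequality for $\{u>t\}$ for a.e.\ $t$, so each superlevel set of $u$ is a ball (up to a translation that must be independent of $t$ by connectedness of the superlevel sets of a harmonic function). In particular $\Omega=\{u=1\}$ is a ball. The main technical obstacle will be the careful verification of the Pólya--Szegő inequality on the unbounded domain $\R^n$ together with the admissibility of $u^*$ as a test function for $\Ca(\Omega^*)$; these are standard but require a controlled approximation argument to match the $\mathcal{C}^\infty_c$ class appearing in \eqref{eq:def-omega}.
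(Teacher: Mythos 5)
Your proposal is correct and is essentially the same argument as the paper's: both are Schwarz symmetrization proofs that combine the Dirichlet-integral characterization of capacity with the coarea formula, the Cauchy--Schwarz estimate $|\Sigma_t|^2 \leq (\int_{\Sigma_t}|Du|\,d\sigma_t)(\int_{\Sigma_t}|Du|^{-1}\,d\sigma_t)$, and the Euclidean isoperimetric inequality on level sets, with rigidity read off from equality in the isoperimetric step. The only difference is presentational: you invoke the P\'olya--Szeg\H{o} principle as a known result, whereas the paper constructs the radial rearrangement $w(r,\theta)=R^{-1}(r)$ explicitly and re-derives the needed level-set inequality by hand so as to remain self-contained.
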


\begin{proof}
Let $u$ be the solution of problem~\eqref{eq:system} corresponding to the domain $\Omega$, and, for all $t\in(0,1]$, set $\Sigma_t=\{u=t\}\subset \R^n\setminus\Omega$. We have already remarked that $u$ is analytic, hence, by the results in~\cite{Sou_Sou}, we have that the critical values of $u$ are discrete. In particular, for almost all $t\in(0,1]$, the set $\Sigma_t$ is a smooth hypersurface. Moreover, using the results in~\cite{Har_Sim,Lin} we have that the $(n-1)$-dimensional Hausdorff measure of $\Sigma_t$ is finite for all $t$. We also observe that, using the coarea formula, the capacity of $\Omega$ can be written as
\begin{equation}
\label{eq:capacity_coarea}
{\rm Cap}(\Omega)\,=\,\frac{1}{(n-2)|S^{n-1}|}\int_{\R^n\setminus\Omega}|D u|^2\d v\,=\,\frac{1}{(n-2)|S^{n-1}|}\int_0^1\left(\int_{\Sigma_t}|D u|\d\sigma_t\right)\d t\,,
\end{equation}
where $\sigma_t$ is the measure induced on $\Sigma_t$.
Fix now a regular value $t\in(0,1]$, so that $|D u|\neq 0$ on $\Sigma_t$.
Our aim is to prove that $\int_{\Sigma_t}\abs{D u}\d\sigma_t\geq\int_{S_t}\abs{D w}\d s_t$, where $w$ is the symmetrization of $u$ (see the definition below) and in fact its level sets $S_t$'s are geodesic spheres bounding geodesic balls having the same volumes as the regions enclosed by the $\Sigma_t$'s.  
Using Cauchy--Schwartz, we compute
\begin{align*}
|\Sigma_t|^2\,&=\,\left(\int_{\Sigma_t}\d\sigma_t\right)^2
\\
&=\,\left(\int_{\Sigma_t}\sqrt{|D u|}\cdot\frac{1}{\sqrt{|D u|}}\d\sigma_t\right)^2
\\
&\leq\,\left[\left(\int_{\Sigma_t}|D u|\d\sigma_t\right)^{\!\frac{1}{2}}\left(\int_{\Sigma_t}\frac{1}{|D u|}\d\sigma_t\right)^{\!\frac{1}{2}}\right]^2
\\
&=\,\left(\int_{\Sigma_t}|D u|\d\sigma_t\right)\left(\int_{\Sigma_t}\frac{1}{|D u|}\d\sigma_t\right)\,,
\end{align*}
for every $t$ regular value of $u$.
Therefore, for regular values, the following estimate holds for the integrand in formula~\eqref{eq:capacity_coarea}
\begin{equation}
\label{eq:capacity_ineq1}
\int_{\Sigma_t}|D u|\d\sigma_t\,\geq\,\frac{|\Sigma_t|^2}{\int_{\Sigma_t}\frac{1}{|D u|}\d\sigma_t}\,.
\end{equation}
Now, let us call $\Omega_t=\Omega\cup\{t\leq u\leq 1\}$. In particular, we have $\partial\Omega_t=\Sigma_t$ and
\begin{equation}
\label{eq:area_Omegat}
|\Omega_t|\,=\,|\Omega|+\int_t^1\left(\int_{\Sigma_s}\frac{1}{|D u|}\d\sigma_s\right)\d s\,.
\end{equation}
Consider the function $t\mapsto|\Omega_t|$, defined from $(0,1]$ to $[|\Omega|,+\infty)$. We prove now that $t\mapsto|\Omega_t|$ is continuous for all $t$. 
To do that, it is sufficient to show the continuity of the repartition function 
$$
F(t)\,=\,|\{u>t\}|\,=\,\int_{\{u>t\}}\d v\,=\,\int_t^1\left(\int_{\Sigma_\tau}\frac{1}{|D u|}\d\sigma_\tau\right)\d \tau\,.
$$
Since the measure on $\R^n\setminus\Omega$ is locally finite and positive, from~\cite[Proposition~2.6]{Amb_Dap_Men} it follows that, for any value $t_0\in(0,1]$, it holds
$$
\lim_{t\to t_0^+}F(t)\,=\,F(t_0)\,,\qquad \lim_{t\to t_0^-}F(t)\,=\,F(t_0)+|\{u=t_0\}|\,.
$$ 
On the other hand, $|\{u=t_0\}|=|\Sigma_{t_0}|=0$, because we have already observed that the Hausdorff dimension of $\Sigma_t$ is $(n-1)$. Therefore $F$ is continuous and so is $t\mapsto |\Omega_t|$.

Moreover, the function $t\mapsto |\Omega_t|$ is clearly monotonic and strictly decreasing in $t$, and it is differentiable for all regular values $t$ of $u$ (hence, it is differentiable almost everywhere, because the critical values of $u$ are discrete). At a point where $t\mapsto|\Omega_t|$ is differentiable, from~\eqref{eq:area_Omegat} and the Fundamental Theorem of Calculus, we deduce
\begin{equation}
\label{eq:der_Omegat}
\frac{d|\Omega_t|}{dt}(t)=-\int_{\Sigma_t}\frac{1}{|D u|}\d\sigma_t\,.
\end{equation}
Moreover, from the isoperimetric inequality we have  
\begin{equation}
\label{eq:iso_in}
\left(\frac{|\Omega_t|}{|B^n|}\right)^{\frac{1}{n}}\,\leq\,\left(\frac{|\Sigma_t|}{|S^{n-1}|}\right)^{\frac{1}{n-1}}\,.
\end{equation}
Using formul\ae~\eqref{eq:der_Omegat} and~\eqref{eq:iso_in} to estimate the right hand side of~\eqref{eq:capacity_ineq1} we obtain
\begin{equation}
\label{eq:capacity_ineq2}
\int_{\Sigma_t}|D u|\d\sigma_t\,\geq\,-|S^{n-1}|^2\left(\frac{|\Omega_t|}{|B^n|}\right)^{2\frac{n-1}{n}}{\frac{1}{\frac{d|\Omega_t|}{dt}}}\,.
\end{equation}
We define now the function $R:(0,1]\to[R(1),+\infty)$ as
$$
R(t)\,=\,\left(\frac{|\Omega_t|}{|B^n|}\right)^{\frac{1}{n}}\,.
$$
Since $t\mapsto |\Omega_t|$ is continuous, so is $t\mapsto R(t)$. Moreover, $R$ is strictly decreasing, differentiable when $t\mapsto|\Omega_t|$ is differentiable, and in that case it holds
$$
\frac{d|\Omega_t|}{dt}(t)\,=\,n|B^n|R^{n-1}(t)R'(t)\,=\,|S^{n-1}|R^{n-1}(t)R'(t)\,,
$$
where we have denoted by $R'(t)$ the derivative of $R(t)$ with respect to $t$ (where it exists).
Formula~\eqref{eq:capacity_ineq2} can be rewritten in terms of $R$ as
\begin{equation}
\label{eq:capacity_ineq3}
\int_{\Sigma_t}|D u|\d\sigma_t\,\geq\,-|S^{n-1}|\frac{R^{n-1}(t)}{R'(t)}\,.
\end{equation}
Let $(r,\theta)\in\R_+\times S^{n-1}$ be the polar coordinates on $\R^n$ and let $B(0,r)$ be the ball of radius $r$ centered at the origin. 
Define $w:\R^n\setminus B(0,R(1))\to (0,1]$ as
$$
w(r,\theta)=R^{-1}(r)\,,\ \forall\theta\in S^{n-1}\,,
$$
where $R^{-1}$ is the inverse of $R$.
When $t$ is a regular value, we have
$$
\frac{\partial w}{\partial r}(R(t),\theta)\,=\,\frac{1}{R'(t)}\,,\qquad \frac{\partial w}{\partial \theta}=0\,,
$$
so that
$$
|D w|(R(t),\theta)\,=\,\abs*{\frac{1}{R'(t)}}\,=\,-\frac{1}{R'(t)}\,.
$$
In particular, $|D w|$ is independent of $\theta$, and integrating it on the sphere of radius $R(t)$ centered at the origin, that is $S_t=\de B(0,R(t))$, we obtain
$$
\int_{S_t}|D w|\d s_t\,=\,-|S_t|\,\frac{1}{R'(t)}\,=\,-|S^{n-1}|\frac{R^{n-1}(t)}{R'(t)}\,,
$$
where we have denoted by $s_t$ the measure induced on $S_t$.
Therefore, from formula~\eqref{eq:capacity_ineq3} we deduce the following inequality
\begin{equation}
\label{eq:capacity_ineq4}
\int_{\Sigma_t}|D u|\d\sigma_t\,\geq\,\int_{S_t}|D w|\d s_t\,.
\end{equation}
We remark that formula~\eqref{eq:capacity_ineq4} holds only for regular values $t$ of $u$. However, we have already observed that the critical values of $u$ are discrete, hence formula~\eqref{eq:capacity_ineq4} holds for almost every $t\in(0,1]$. Integrating in $t$ and recalling inequality~\eqref{eq:capacity_coarea}, we obtain
\begin{equation*}
{\rm Cap}(\Omega)\,\geq\,
\frac{1}{(n-2)|S^{n-1}|}\int_0^1\left(\int_{S_t}|D w|\d s_t\right)\d t
=\,\frac{1}{(n-2)|S^{n-1}|}\int_{\R^n\setminus B(0,R(1))}\!\!|D w|^2\d\mu\,,
\end{equation*}
where in the rightmost equality we have used the coarea formula. From the definition of capacity, it follows that
\begin{equation}
\label{eq:capacity_ineq5}
{\rm Cap}(\Omega)\,\geq\,{\rm Cap}(B(0,R(1)))\,.
\end{equation}
The capacity of a ball is easily computed, and recalling the definition of $R$ we find
$$
{\rm Cap}(B(0,R(1)))\,=\,(R(1))^{n-2}\,=\,\left(\frac{|\Omega|}{|B^n|}\right)^{\!\frac{n-2}{n}}\!\!.
$$ 
Substituting in~\eqref{eq:capacity_ineq5}, we obtain inequality~\eqref{eq:PFS_app}. If the equality holds in~\eqref{eq:PFS_app}, retracing the steps of this proof, one sees that the equality also holds in the isoperimetric inequality~\eqref{eq:iso_in} for all $t$, so that the $\Omega_t$'s (in particular $\Omega=\Omega_1$) are forced to be balls.
\end{proof}

\noindent We also remark that, if the domain $\Omega$ satisfies
\begin{equation}
\label{eq:isoin_capsurf}
\left(\frac{|\de\Omega|}{|S^{n-1}|}\right)^{\!\frac{n-2}{n-1}}\!\!\leq\,\Ca(\Omega) \,,
\end{equation}
then the isoperimetric inequality would immediately imply~\eqref{eq:PFS_app}. However, inequality~\eqref{eq:isoin_capsurf} is known to be false in general. In fact, one can enclose a domain with arbitrarily large boundary measure $|\de\Omega|$ inside a ball $B(0,r)$ of small fixed radius $r$, so that $\Ca(\Omega)\leq\Ca(B(0,r))=r^{n-2}<<|\de\Omega|$. This means that the quantity
$$
\mathcal{E}_n(\Omega)\,=\,\frac{\Ca(\Omega) }{\left(\frac{|\de\Omega|}{|S^{n-1}|}\right)^{\frac{n-2}{n-1}}}
$$
can be made arbitrarily small.
One may then ask if a lower bound for $\mathcal{E}_n(\Omega)$ exists at least for convex domains. In dimension $n=3$, the limit of the quotient
$$
\mathcal{E}_3(\Omega)\,=\,\frac{\Ca(\Omega)}{\sqrt{\frac{|\de\Omega|}{4\pi}}}\,,
$$
as $\Omega$ approaches the $2$-disk in $\R^3$, is $2\sqrt{2}/\pi$, and a well known conjecture by P\'{o}lya-Szeg\"o~\cite[Section~1.18]{Pol_Sze} states that the value $2\sqrt{2}/\pi$ is indeed the infimum of $\mathcal{E}_3(\Omega)$ among $3$-dimensional convex domains with positive surface area, see also~\cite{Cra_Fra_Gaz,Fra_Gaz_Pie} for some discussions on the topic.

 We end this section with two corollaries on the asymptotic behavior of $f$ and $g$, whose proof follow by just applying Proposition~\ref{pro:est-u} together with the formul\ae\ derived
in Sections~\ref{sec:conformal-form} and~\ref{sec:estimates}.

\begin{cor}
  \label{cor:est-f}
  Let $(\Omega,f,g)$ be a regular solution to problem~\eqref{eq:geom-system} in the sense of Definition~\ref{ass:fg}. Then the following asymptotic estimates
  hold for $\abs{x} \to \infty$:
  \begin{align*}
    f &= 1-2\Ca(\Omega)^{\frac{2}{n-2}}\abs{x}^{-2} \cdot (1+o(1)) \\
    \nabla_i f &= 4 \cdot \Ca(\Omega)^{\frac{2}{n-2}}\abs{x}^{-4}x_i \cdot (1+o(1)) \\
    \nabla^2_{ij}f &= -4 \cdot \Ca(\Omega)^{\frac{2}{n-2}}\abs{x}^{-4}g_{ij}^{\R^n} \cdot (1+o(1)) .
  \end{align*}
\end{cor}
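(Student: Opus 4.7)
The plan is to derive Corollary~\ref{cor:est-f} directly from Proposition~\ref{pro:est-u} by feeding the asymptotic expansions of $u$, $Du$, $D^2u$ into the algebraic relations between $u$ and $f$ already worked out in Section~\ref{sec:conformal-form}. No new analytic input is needed; the proof is essentially a careful bookkeeping of leading terms.

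First I would handle the expansion of $f$. From $f = (u^{-2/(n-2)}-1)/(u^{-2/(n-2)}+1)$ one gets at once the cleaner identity $1-f = 2/(u^{-2/(n-2)}+1)$. Substituting $u = \Ca(\Omega)\abs{x}^{2-n}(1+o(1))$ from Proposition~\ref{pro:est-u} gives $u^{-2/(n-2)} = [\Ca(\Omega)]^{-2/(n-2)}\abs{x}^{2}(1+o(1))$, which dominates the $+1$ in the denominator as $\abs{x}\to\infty$, so
\[ 1-f \,=\, 2\,[\Ca(\Omega)]^{\tfrac{2}{n-2}}\abs{x}^{-2}(1+o(1)), \]
which is the first claim.

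For $\nabla_i f = D_i f$ I would use the identity
\[ D_i f \,=\, -\frac{1}{n-2}\Big(\frac{1+f}{1-f}\Big)^{\!\tfrac{n-2}{2}}(1-f^{2})\, D_i u \]
established in Section~\ref{sec:conformal-form}, together with the convenient rewriting $((1+f)/(1-f))^{(n-2)/2} = u^{-1}$, and $1-f^2 = (1-f)(1+f)\sim 2(1-f)$ as $f\to 1$. Plugging in $u$, $1-f$ and $D_i u = -(n-2)\Ca(\Omega)\abs{x}^{-n}x_i(1+o(1))$, the prefactors combine to give
\[ D_i f \,=\, 4\,[\Ca(\Omega)]^{\tfrac{2}{n-2}}\abs{x}^{-4}x_i(1+o(1)), \]
as required. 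The Hessian of $f$ is then handled via the explicit formula
\[ \nabla^{2}_{ij}f \,=\, \frac{n}{1-f^{2}}D_i f D_j f \,-\, \frac{1}{1-f}\abs{Df}^{2}g^{\R^{n}}_{ij} \,-\, \frac{1}{n-2}\Big(\frac{1+f}{1-f}\Big)^{\!\tfrac{n-2}{2}}(1-f^{2})D^{2}_{ij}u, \]
again from Section~\ref{sec:conformal-form}. Using the already established expansions for $f$, $Df$ and $D^{2}_{ij}u = (n-2)\Ca(\Omega)\abs{x}^{-n-2}(nx_i x_j - \abs{x}^{2}g^{\R^{n}}_{ij})(1+o(1))$, each summand is leading order $\abs{x}^{-4}$ and the $x_i x_j$ contributions from the first and third summands cancel exactly, leaving $-4[\Ca(\Omega)]^{2/(n-2)}\abs{x}^{-4}g^{\R^{n}}_{ij}$ modulo $o(\abs{x}^{-4})$.

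The only step that requires a small amount of attention is the cancellation of the $\abs{x}^{-6}x_i x_j$ contributions in the Hessian calculation: the first summand contributes $4n[\Ca(\Omega)]^{2/(n-2)}\abs{x}^{-6}x_i x_j$ and the third contributes $-4n[\Ca(\Omega)]^{2/(n-2)}\abs{x}^{-6}x_i x_j$, and these must be tracked with matching error terms so the residual is genuinely $o(\abs{x}^{-4})$. Apart from this bit of arithmetic, no real obstacle is expected.
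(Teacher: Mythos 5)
Your proof is correct and follows exactly the route the paper indicates (the paper does not spell the computation out, merely stating that Corollary~\ref{cor:est-f} "follows by just applying Proposition~\ref{pro:est-u} together with the formul\ae\ derived in Sections~\ref{sec:conformal-form} and~\ref{sec:estimates}"). Your bookkeeping — in particular the identification $((1+f)/(1-f))^{(n-2)/2}=u^{-1}$, the coefficients $4n$ on the $\abs{x}^{-6}x_ix_j$ contributions from the first and third summands of $\nabla^2_{ij}f$, and the $-8+4=-4$ count on the $\abs{x}^{-4}g^{\R^n}_{ij}$ contributions — all checks out, and you correctly flag that the residual after the $x_ix_j$ cancellation must be controlled at the $o(\abs{x}^{-4})$ level.
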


\begin{cor}
  \label{cor:asympt}
  Let $(\Omega,f,g)$ be a regular solution to problem~\eqref{eq:geom-system} in the sense of Definition~\ref{ass:fg}. Then the following asymptotic
  estimates hold for $\abs{x} \to \infty$:
  \begin{align*}
    \abs{\nabla f}_g & = 2 \abs{x}^{-1} \cdot (1+o(1)) \\
    H_g & = -\frac{n-1}{2} {\Ca(\Omega)}^{-\frac{2}{n-2}} \abs{x} \cdot (1+o(1)) \\
    d v_g & = 2^n \cdot {\Ca(\Omega)}^{-2\frac{n}{n-2}} \abs{x}^{-2n} \cdot (1+o(1)) \cdot \d v \\
    \nu_g^i & = \left[2 \cdot {\Ca(\Omega)}^{\frac{2}{n-2}} \right]^{-1} \abs{x} x_j g^{ij}_{\R^n} \cdot (1+o(1)) \\
    g_{ij} & = 4 \cdot {\Ca(\Omega)}^{\frac{4}{n-2}} \abs{x}^{-4} \cdot (1+o(1)) \\
    X^i & = \abs{x}^2 x_i \cdot o(1) .
  \end{align*}
\end{cor}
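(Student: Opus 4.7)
The plan is to carry out a sequence of direct calculations built on the conformal relation $g=(1-f)^2 g_{\R^n}$ derived in Section~\ref{sec:conformal-form}, together with the expansions of $f$, $\nabla f$ and $\nabla^2 f$ in Corollary~\ref{cor:est-f} and of $u$, $Du$, $D^2u$ in Proposition~\ref{pro:est-u}. The common workhorse will be the asymptotic
\[
1-f \,=\, 2\,[\Ca(\Omega)]^{\frac{2}{n-2}}\abs{x}^{-2}\,(1+o(1))\qquad\text{as } \abs{x}\to\infty,
\]
an immediate consequence of the expansion of $f$. From this formula the expressions for $g_{ij}=(1-f)^2 g_{ij}^{\R^n}$ and for the volume density $dv_g=(1-f)^n dv$ can be read off at once; the gradient norm follows from $\abs{\nabla f}_g^2=(1-f)^{-2}\abs{Df}^2$ after plugging in $\abs{Df}$ from Corollary~\ref{cor:est-f}, and the unit normal from $\nu_g^i=g^{ij}\nabla_j f/\abs{\nabla f}_g=(1-f)^{-2}D_i f/\abs{\nabla f}_g$.

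\textbf{Mean curvature.}
For $H_g$ I would start from the identity
\[
H_g \,=\, \frac{\Delta_g f}{\abs{\nabla f}_g}\,-\,\frac{\nabla^2 f(\nabla f,\nabla f)}{\abs{\nabla f}_g^3}
\]
already established in~\eqref{eq:mean-curv}, and treat the two terms separately. The first is handled by replacing $\Delta_g f$ with $-nf\,\abs{\nabla f}_g^2/(1-f^2)$ via the first equation of~\eqref{eq:geom-system}, and then substituting $f\to 1$, $1-f^2\simeq 4[\Ca(\Omega)]^{2/(n-2)}\abs{x}^{-2}$ and $\abs{\nabla f}_g\simeq 2\abs{x}^{-1}$; this yields a leading contribution of $-\tfrac{n}{2}[\Ca(\Omega)]^{-2/(n-2)}\abs{x}$. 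For the second term I would unfold $\nabla^2 f(\nabla f,\nabla f)=(1-f)^{-4}D_i f\,D_j f\,\nabla^2_{ij}f$ and insert the asymptotics $\nabla^2_{ij}f\simeq -4[\Ca(\Omega)]^{2/(n-2)}\abs{x}^{-4}g_{ij}^{\R^n}$ and $D_i f\simeq 4[\Ca(\Omega)]^{2/(n-2)}\abs{x}^{-4}x_i$ from Corollary~\ref{cor:est-f}; after dividing by $\abs{\nabla f}_g^3\simeq 8\abs{x}^{-3}$ one gets a contribution of $-\tfrac{1}{2}[\Ca(\Omega)]^{-2/(n-2)}\abs{x}$, and subtraction gives $H_g=-\tfrac{n-1}{2}[\Ca(\Omega)]^{-2/(n-2)}\abs{x}(1+o(1))$, as required.

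\textbf{The vector field $X$.}
The most delicate step, which I expect to be the main obstacle, is the estimate for $X^i$. Starting from the second form $X=(1+f)^{-(n-2)}\nabla(\abs{\nabla f}_g^2/(1-f^2))$ in~\eqref{eq:def-x2}, I would use~\eqref{eq:|naf|} to rewrite the argument as the scalar $F:=u^{-2(n-1)/(n-2)}\abs{Du/(n-2)}^2$. Plugging in Proposition~\ref{pro:est-u} confirms that $F$ tends to the constant $[\Ca(\Omega)]^{-2/(n-2)}$; the subtle point is that this very fact forces the leading-order terms in $D_iF$ to cancel, so that in fact $D_iF=o(\abs{x}^{-1})$. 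Contracting with $g^{ij}\simeq (1-f)^{-2}\delta^{ij}\simeq \tfrac{1}{4}[\Ca(\Omega)]^{-4/(n-2)}\abs{x}^4\delta^{ij}$ and multiplying by the bounded factor $(1+f)^{-(n-2)}\to 2^{-(n-2)}$ then produces the required bound $X^i=\abs{x}^2 x_i\cdot o(1)$. Beyond this cancellation, the whole proof is an exercise in bookkeeping of the conformal factors $(1-f)^{\pm 2}$, and no ingredient outside Proposition~\ref{pro:est-u} and Corollary~\ref{cor:est-f} is needed.
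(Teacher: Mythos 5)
Your proof is correct and follows the same computational route the paper indicates (Corollary~\ref{cor:asympt} carries no written proof in the Appendix, only the remark that it follows by applying Proposition~\ref{pro:est-u} to the conformal formul\ae\ of Sections~\ref{sec:conformal-form} and~\ref{sec:estimates}); your list of calculations carries this out explicitly, and the decomposition of $H_g$ through~\eqref{eq:mean-curv} matches the one the paper has already set up.

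One caution concerns the way you justify the $X^i$ step. The assertion that ``$F$ tending to a constant forces the leading-order terms in $D_iF$ to cancel'' is not, by itself, a valid implication: a function may converge to a constant at infinity while its gradient remains of a larger order. What actually produces the cancellation is that the model expression $F_0$, obtained by inserting into $F=u^{-2(n-1)/(n-2)}\abs{Du/(n-2)}^2$ the leading terms of $u$, $Du$, $D^2u$ from Proposition~\ref{pro:est-u}, is \emph{identically} constant; consequently the product-rule formula for $D_jF_0$ is an algebraic identity that vanishes. Carrying this out, the two contributions are $+2(n-1)\Ca(\Omega)^{-2/(n-2)}\abs{x}^{-2}x_j$ and $-2(n-1)\Ca(\Omega)^{-2/(n-2)}\abs{x}^{-2}x_j$ and so cancel exactly, and the $o(1)$ relative errors in Proposition~\ref{pro:est-u} are then responsible for the residual $o(\abs{x}^{-1})$. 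Your conclusion and the subsequent bound on $X^i$ are right; the cancellation should simply be stated as this direct algebraic computation rather than deduced from the convergence of $F$. Incidentally, carrying out your bookkeeping for $dv_g=(1-f)^n\,dv$ gives $2^n\,\Ca(\Omega)^{+2n/(n-2)}\abs{x}^{-2n}(1+o(1))\,dv$, so the exponent $\Ca(\Omega)^{-2n/(n-2)}$ displayed in the corollary appears to be a sign typo in the paper; since no subsequent application tracks the power of $\Ca(\Omega)$, this is harmless.
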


\noindent
We also mention that from formula~\eqref{eq:est-levels} we obtain the following estimate on the area of the level sets of the function $f$:
\begin{equation}
\label{eq:est-levels-f}
\int_{\{f=1-\varepsilon\}}\!\!d\sigma_g<C\,\epsilon^{\frac{n-1}{2}}
\end{equation}
for some constant $C>0$ and for $0<\epsilon<1$ big enough. Of course one can derive a more precise estimate, but~\eqref{eq:est-levels-f} is enough for the purposes of this work.

\section*{Acknowledgements.} The authors are members of Gruppo Nazionale per l'Analisi Matematica, la Probabilit\`a e le loro Applicazioni (GNAMPA), which is part of the Istituto Nazionale di Alta Matematica (INdAM), and partially funded by the GNAMPA project ``Principi di fattorizzazione, formule di monotonia e disuguaglianze geometriche''.
The authors wish to thank Giulio Ciraolo for bringing to their attention the references~\cite{Bia_Cir_2,Bia_Cir_Sal,Bandle,Pay_Phi}.
\bibliographystyle{amsalpha}
\bibliography{bibliography}

\end{document}